\documentclass[11pt]{amsart}
\usepackage{amsmath, graphicx, amsfonts, amssymb}
\usepackage{amsfonts, mathrsfs, color, amsthm, amsbsy}
\usepackage{mathtools}
\usepackage{dsfont} 
\usepackage{adjustbox}
\usepackage{enumerate}
\usepackage[normalem]{ulem}
\usepackage{amsrefs}
\usepackage{comment}
\usepackage{soul}
\usepackage{comment}
\usepackage[utf8]{inputenc}

\usepackage{geometry} 
\usepackage{parskip}
\newtheorem{thmx}{Theorem}
\usepackage{hyperref}

\usepackage[symbol]{footmisc}

\usepackage{tikz}
\usetikzlibrary{positioning,patterns,calc}

\newcommand{\real}{\mathbb{R}} 
\newcommand{\R}{\mathbb{R}} 
\newcommand{\integer}{\mathbb{Z}} 

\newcommand{\nball}{B^{N}} 
\newcommand{\eball}{B} 
\renewcommand{\O}{\mathcal{O}} 
\renewcommand{\S}{\mathbb{S}} 
\newcommand{\F}{\mathcal{F}} 
\newcommand{\jfg}{J_g^\F} 

\newcommand{\haus}{\mathcal{H}} 
\renewcommand{\H}{\mathcal{H}} 

\newcommand{\csvf}{C^1_c} 

\newcommand{\graph}{\textup{graph}} 
\newcommand{\lploc}[1]{\mathbf{L}_{\textup{loc}}^{#1}} 

\newcommand{\ivarifolds}{\mathcal{IV}} 
\newcommand{\setv}{\mathbf{v}} 
\newcommand{\supp}{\textnormal{spt}} 
\newcommand{\spt}{\textnormal{spt}} 
\newcommand{\restrictv}{\mathbin{\hspace{0.1em}\vrule height 1.3ex depth 0pt width 0.13ex\vrule height 0.13ex depth 0pt width 1.0ex}} 
\newcommand{\mass}[1]{\|#1\|} 
\newcommand{\sclass}{\mathfrak{s}} 

\newcommand{\reg}{\textup{reg}} 
\newcommand{\sing}{\textup{sing}} 
\newcommand{\regscale}{\pmb{r}} 
\newcommand{\badreg}{\mathcal{B}} 
\newcommand{\genreg}{\textnormal{gen-reg}} 

\newcommand{\strata}{\mathcal{S}} 
\newcommand{\kcone}{\mathscr{C}} 

\newcommand{\BC}{\mathbf{C}}
\newcommand{\del}{\partial} 

\newcommand{\indexa}{\textnormal{index}_a}
\newcommand{\windexa}{\omega\textnormal{-index}_a} 
\newcommand{\inj}{\textnormal{inj}} 
\renewcommand{\exp}{\textnormal{exp}} 
\newcommand{\vol}{\textup{vol}} 

\newtheorem{theorem}{Theorem}[section]
\newtheorem{lemma}[theorem]{Lemma}

\newtheorem{corollary}[theorem]{Corollary}

\newtheorem{definition}[theorem]{Definition}
\newtheorem*{theorem*}{Theorem}

\makeatletter
\@addtoreset{claim}{theorem}
\makeatother

\newtheoremstyle{newtheoremstyledefn}
{3pt}
{3pt}
{}
{\parindent}
{\bfseries}
{.}
{0.5em}
{} 

\theoremstyle{newtheoremstyledefn}
\newtheorem{remark}[theorem]{Remark}

\usepackage{etoolbox}
\makeatletter
\patchcmd{\@maketitle}
  {\ifx\@empty\@dedicatory}
  {\ifx\@empty\@date \else {\vskip3ex \centering\footnotesize\@date\par\vskip1ex}\fi
   \ifx\@empty\@dedicatory}
  {}{}
\patchcmd{\@adminfootnotes}
  {\ifx\@empty\@date\else \@footnotetext{\@setdate}\fi}
  {}{}{}
\makeatother

\title[Quantitative Estimates for PMC Hypersurfaces]{Quantitative Estimates on the Topology and Singular Set of Prescribed Mean Curvature Hypersurfaces}
\author{Nicolau S. Aiex, Sean McCurdy, and Paul Minter}
\address{Department of Mathematics, National Taiwan Normal University, Taipei, Taiwan}
\email{nsarquis@math.ntnu.edu.tw}
\address{Instituto de Matemáticas, Universidad Nacional Autónoma de México, Ciudad Universitaria, 04510,
México, CDMX}
\email{sean.mccurdy@im.unam.mx}
\address{Department of Pure Mathematics and Mathematical Statistics, University of Cambridge}
\email{pdtwm2@cam.ac.uk}

\begin{document}

\begin{abstract}
We establish quantitative topological and singularity properties for (certain) prescribed mean curvature (PMC) hypersurfaces $V^n$ in Riemannian manifolds $(N^{n+1},h)$. Indeed, if $
V$ has area at most $A>0$ with PMC given by a $C^{1,\alpha}$ function $g:N\to \R$ with the bound $|g|_{C^{1,\alpha}}\leq \Gamma$, we show that there exists a constant $C$ depending only on $n,h,A,\Gamma$ and geometric quantities such that:
\begin{align*}
    \sum^n_{i=0}b^i(V) & \leq C(1+\text{index}(V)) \quad \text{if }3\leq n+1\leq 7;\\
    \mathcal{M}^{*n-7}(\sing(V)) & \leq C(1+\text{index}(V)) \quad \text{if }n+1\geq 8.
\end{align*}
Here, $b^i$ denote the Betti numbers over any field, $\mathcal{M}^{*n-7}$ denotes the upper $(n-7)$-dimensional Minkowski content, and $\sing(V)$ is the singular set of $V$. The first inequality extends the work of Song from the minimal hypersurface setting to the PMC hypersurface setting, whilst the second extends work of the authors. Our results apply to the PMC hypersurfaces constructed recently through min-max techniques by Bellettini--Wickramasekera.
\end{abstract}

\maketitle

\section{Introduction}

In \cite{song.a2023}, A.~Song established quantitative estimates on both the topological data (in ambient dimensions $3\leq n+1\leq 7$) and the $(n-7)$-dimensional Hausdorff measure of the singular set (in ambient dimensions $n+1\geq 8$) of minimal hypersurfaces $V^{n}$ in a Riemannian manifold $(N^{n+1},h)$ in terms of the area and Morse index of the minimal hypersurface. In \cite{aiex-mccurdy-minter2024}, the authors improved on the result concerning the singular set when $n+1\geq 8$ by establishing bounds on its upper $(n-7)$-dimensional Minkowski content. These results on the singular set in turn extend work of Naber--Valtorta \cite{naber-valtorta2020} from area minimizing hypersurfaces to minimal hypersurfaces with finite index.

Once one has certain a priori estimates, the arguments in \cite{song.a2023} are of a combinatorial nature. Song suggests that, since many arguments in \cite{song.a2023} are not specific to minimal hypersurfaces (and similarly for \cite{aiex-mccurdy-minter2024}), they may apply to other variational objects. The purpose of this short note is to illustrate that this is possible for (certain) hypersurfaces with prescribed mean curvature (PMC) and finite index.

We characterize PMC hypersurfaces in a Riemannian manifold $(N^{n+1},h)$ as solutions to a variational problem arising from a parametric integrand depending on a given $C^{1,\alpha}$ function $g:N\to \real$, as in \cite{schoen-simon1981}*{$(1.2)$--$(1.4)$} and \cite{bellettini-wickramasekera2019:arxiv}*{Definition 6.1}. Unlike minimal hypersurfaces, PMC hypersurfaces may naturally have tangential self-intersections depending on whether $g$ has a sign or not (for a given choice of unit normal). As such, the `genuine' singular set may not directly correspond to the singular set of the varifold representing the PMC hypersurface. In fact, we may decompose the support of such a varifold $V$ as
$$\spt\|V\| = \genreg\,V\cup\Sigma,$$
where $\genreg\, V$ is an immersed hypersurface of class $C^2$ with only tangential self-intersections and $\Sigma$ is the remaining `genuine' singular set.

The proofs in \cite{song.a2023,aiex-mccurdy-minter2024} utilise that minimal hypersurfaces with finite index are locally stable. We will utilise an analogous phenomenon here, although one must be somewhat careful with which notion of index is used in the PMC setting when the prescribing function $g$ does not have a sign. Suitable covering arguments based on the stability radius are then used to prove the results, with a key point being to control the number of balls used in the cover appropriately by the index in order to deduce the claimed bounds. In the minimal hypersurface setting, the regularity theory of stable minimal hypersurfaces \cite{schoen-simon1981, wickramasekera2014} is of crucial importance. Recent work by Bellettini--Wickramasekera \cite{bellettini-wickramasekera2019:arxiv} establishes the corresponding regularity theory for PMC hypersurfaces. This will allow us to use analogous arguments to those in the minimal hypersurface case.

Our results are divided into the low-dimensional case $3\leq n+1\leq 7$ (where the set $\Sigma$ above is necessarily empty) and the high-dimensional case $n+1\geq 8$. In the low-dimensional case where $V = \genreg\,V$, we fix a field $\mathbb{F}$ and write $b^i(\cdot) = b^i(\cdot,\mathbb{F})$ for the dimension over $\mathbb{F}$ of the cohomology groups $H^i(\cdot,\mathbb{F})$. In the high-dimensional case, we recall from \cite{bellettini-wickramasekera2019:arxiv} that necessarily $\dim_\H(\Sigma)\leq n-7$.

\begin{thmx}\label{thm:A-0}
    Let $3\leq n+1\leq 7$, $\Gamma,\Lambda,\mu,\mu_1>0$, and $I\in \{0,1,2,\dotsc\}$. Let $(N^{n+1},h)$ be a closed Riemannian manifold and fix a $C^{1,\alpha}$ function $g:N\to \R$ with $|g|_{1,\alpha}\leq\Gamma$. Then if $V\in\sclass_{g,\Lambda,I,\mu,\mu_1}(N)$, one has
    $$\sum^n_{i=0}b^i(V) \leq C(1+I).$$
    Here, $C = C(N,h,\Gamma,\Lambda,\mu,\mu_1)\in (0,\infty)$.
\end{thmx}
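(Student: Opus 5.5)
We follow the strategy of Song \cite{song.a2023}: reduce to a covering argument by balls on which $V$ is stable, with the number of balls controlled by $I$, and then glue local topological bounds via a Mayer--Vietoris type argument.

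\textbf{Step 1: stability radius and locally stable balls.} For $V\in\sclass_{g,\Lambda,I,\mu,\mu_1}(N)$ define at each $x\in V$ a stability radius $s(x)$, the supremum of $r\le r_0$ such that $V$ is stable (in the sense of the class $\sclass$, i.e.\ with respect to the relevant second variation of the PMC functional) in $B_r(x)$. The standard index argument says that if $B_{r_1}(x_1),\dots,B_{r_{I+1}}(x_{I+1})$ are pairwise disjoint balls on each of which $V$ is unstable, then the index exceeds $I$. Hence at most $I$ disjoint "unstable" balls exist at any scale. Combined with a Vitali-type selection, this lets us find at most $C(1+I)$ points $p_j$ and radii such that $V$ is stable on the complement of $\bigcup_j B_{\rho_j}(p_j)$ at a definite scale, in a multiscale manner as in Song.

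\textbf{Step 2: curvature estimates on stable pieces.} Invoke the Bellettini--Wickramasekera regularity and compactness theory \cite{bellettini-wickramasekera2019:arxiv} for stable PMC hypersurfaces in dimensions $n+1\le7$ (where $\Sigma=\emptyset$) to get a scale-invariant curvature estimate $|A|(y)\,s(y)\le C$ with $C=C(N,h,\Gamma,\Lambda,\mu,\mu_1)$. The lower bounds $\mu,\mu_1$ and area bound $\Lambda$ serve to rule out degenerate scales and give uniform sheet-number bounds via monotonicity. On each ball of radius $\sim s(y)/C$, $V$ is then a union of boundedly many $C^2$ graphs with tangential touching, hence topologically a bounded number of discs.

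\textbf{Step 3: combinatorial gluing.} Cover $V$ by balls $B_{s(y)/C}(y)$ adapted to $s$. Because $s$ is $1$-Lipschitz-like, the cover has bounded overlap. Away from the $C(1+I)$ unstable centres the balls come in annuli around each $p_j$ where $V$ looks like a bounded number of nearly conical pieces. Applying Song's argument, we build a good cover whose nerve has complexity bounded by $C(1+I)$. Here "good" means finite intersections are contractible after slight perturbation, using convexity of small balls and graphicality. The Betti number sum is then bounded via the nerve lemma or a Mayer--Vietoris spectral sequence. The nested annular regions between scales are handled by the fact that stable PMC pieces at small scale rescale to minimal cones, which must be planar by the curvature estimate. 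So the topology in each annulus is that of a bounded number of products $S^{n-1}\times I$.

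\textbf{Main obstacle.} We expect the hardest step to be Step 2 together with the annular control in Step 3 in the PMC setting. Tangential self-intersections mean $V$ is only an immersion, so the Betti numbers of $V$ as a topological space differ from those of the immersed pieces. Gluing therefore requires a uniform bound on the number of sheets and on their coincidence pattern. To handle this, we would first work with the immersed surface (the abstract manifold), and then account for the touching set. That set is controlled because, where $g\neq0$, touching sheets have opposite normals and the touching locus lies in a set on which the sheets' graphs coincide. The class assumptions $\mu,\mu_1$ are presumably exactly what gives this uniformity, and the argument would lean on them here.
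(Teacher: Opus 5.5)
Your overall route (Song's stability-radius covering scheme, with the Bellettini--Wickramasekera sheeting and regularity theory in place of Schoen--Simon) is the one the paper takes. However, two steps as you describe them would not go through.

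First, your plan to ``account for the touching set'' in order to bound the Betti numbers of $V$ as a subset of $N$ cannot succeed, because those Betti numbers are not controlled by area, index and $|g|_{1,\alpha}$. In the flat torus $\R^3/(2\pi\integer)^3$ take the two graphs $z=\pm u(x)$, $u(x)=k^{-4}\sin^2(kx)$, with normals pointing towards each other. By the symmetry $z\mapsto -z$ they have the same scalar mean curvature $H(x)$, so with $g(x,y,z):=H(x)$ they form a $\jfg$-stationary pair. This pair has $|g|_{C^2}$ bounded independently of $k$, area about $8\pi^2$, and index at most $2$, since each Jacobi operator is an $O(k^{-4})$-perturbation of $-\Delta$ on a flat torus. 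The only touching is tangential, along $2k$ circles, an $\haus^2$-null set. Yet the support is homeomorphic to $G\times S^1$, where $G$ is two circles glued at $2k$ points, so its $b^1$ is at least $2k+1$. The estimate must therefore be read for the immersed manifold $S$ with $\iota(S)=\genreg\,V$, consistent with the paper's identification of $\genreg\,V$ with a $C^2$ immersion. For $S$, the preimage of a small ball really is a bounded union of discs. Touching then enters only through the sheeting theorem (multiplicity allowed, order of sheets preserved), as the paper remarks. Note also that $\mu,\mu_1$ are just the ellipticity constants of the integrand $F$ in Definition~\ref{elliptic functional}; they carry no information about coincidence sets.

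Second, nothing in your Steps 1--3 actually bounds the number of pieces by $C(1+I)$. Disjointness of unstable balls only gives $\lesssim I$ balls per dyadic range of the stability radius (cf.\ Lemma~\ref{covering lemma}). Near a single catenoid-like neck of size $\epsilon$ the stability radius already ranges over $\sim\log(1/\epsilon)$ dyadic scales. So everything rests on your assertion that, away from $C(1+I)$ centres, the layers of balls are nearly conical annuli, and your justification does not yield this. A bound $|A|(y)\,d(y,p_j)\leq C$ holds on every cone regular away from its vertex, so it cannot force planarity. Knowing that tangent cones of stable pieces are planes says nothing about intermediate scales. There, neither stability at scale comparable to $d(y,p_j)$ nor closeness to any cone is known a priori: another unstable region may sit nearby, or the density may still be dropping.

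The paper imports exactly the mechanism that supplies this conicality:
\begin{enumerate}
\item Truncate $\mathbf{r}_{\text{stab}}=\min\{\tfrac12 s_V,\bar r\}$, with $\bar r$ depending on $\Gamma$ and $(N,h)$, so that all blow-up limits are \emph{minimal} cones and the Frankel property of their links is available.
\item Prove the reverse curvature estimate (Lemma~\ref{reverse curvature estimate}) to compare $\mathbf{r}_{\text{stab}}$ with the outer radius of pointed $\delta_0$-conical annuli.
\item Use approximate monotonicity of the PMC density ratio to obtain the density-pinching hypothesis.
\item Obtain, by compactness through the Bellettini--Wickramasekera sheeting and higher regularity theorems, the dichotomy of Theorem~\ref{thm:song-combinatorial}: either two disjoint balls with comparable stability radii occur within $\bar R$ dyadic scales, or the annulus is $\delta_0$-close to a cone in $\mathcal{G}_{\beta_0}$.
\end{enumerate}
Feeding this dichotomy into Song's combinatorics is what makes the count linear in $I$. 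Your Step 3 assumes its conclusion, and the genuinely PMC-specific work lies there rather than in the touching set.
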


\begin{thmx}[cf.~Theorem \ref{main theorem n>7} and Theorem \ref{main theorem n=7}]\label{thm:A}
Let $n+1\geq 8$, $I\in\{0,1,2,\dotsc\}$, $\Lambda,\Gamma,\mu,\mu_1\in(0,\infty)$, $K\in(0,1/2)$, and $\alpha\in(0,1]$.
Suppose $(N^{n+1},h)$ is a Riemannian manifold with $0\in N$, $\left|\left.\textnormal{sec}\right|_{B^N_2}\right| \leq K$, $\left.\inj\right|_{B^N_2}\geq K^{-1}$, and $g:\nball_2\rightarrow\real$ is a function of class $C^{1,\alpha}$.
If $|g|_{1,\alpha}\leq \Gamma$ and $V\in\sclass_{g,\Lambda,I,\mu,\mu_1}(\nball_2)$ then $\Sigma=\spt\mass{V}\setminus\genreg\,V$ is countably $(n-7)$-rectifiable and we have, for any $0<r\leq 1/2$:
\begin{equation*}
\begin{aligned}
\haus^{n+1}\left(\nball_{r/8}\left(\Sigma\right)\cap\nball_{1/2}\right)\leq C_0(1+I)r^8;\\
\mass{V}\left(\nball_{r/8}\left(\Sigma\right)\cap\nball_{1/2}\right)\leq C_0(1+I)r^7.
\end{aligned}
\end{equation*}
In particular, $\mathcal{M}^{*n-7}(\Sigma\cap B^N_{1/2})\leq C_0(1+I)$, where $C_0=C_0(n,\mu,\mu_1,\Lambda,K,\Gamma,\alpha)\in(0,\infty)$.
\end{thmx}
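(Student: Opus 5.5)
The plan is to reduce Theorem \ref{thm:A} to the stable case via a covering argument controlled by the index, in the spirit of \cite{aiex-mccurdy-minter2024}. There are three steps, in order.

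\textbf{Step 1: the stable case.} I would first prove the estimate when $V$ is stable ($I=0$) in a ball $\nball_{2\rho}(x)$. Bellettini--Wickramasekera \cite{bellettini-wickramasekera2019:arxiv} give the regularity and compactness theory for stable PMC varifolds in $\sclass_{g,\Lambda,0,\mu,\mu_1}$:
\begin{itemize}
\item $\dim_\H\Sigma\le n-7$;
\item compactness of the class under varifold convergence;
\item tangent cones at points of $\Sigma$ are stable minimal cones, since $g$ scales away under blow-up.
\end{itemize}
With these in hand, the quantitative stratification and rectifiable-Reifenberg machinery of Naber--Valtorta \cite{naber-valtorta2020} should run as in \cite{aiex-mccurdy-minter2024}. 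One uses the monotonicity formula with the error term $e^{C\Gamma r}$, which comes from the bounded mean curvature, together with an $\epsilon$-regularity statement: if $V$ is close at scale $r$ to a cone with spine of dimension at least $n-6$, then $V$ has no genuine singularities nearby. The output is the scale-invariant bound
\[
\haus^{n+1}\bigl(\nball_{s}(\Sigma)\cap\nball_\rho(x)\bigr)\le C s^{8}\rho^{n-7}
\]
for $s\le\rho$, and the analogous mass bound with $s^7$. The same argument gives countable $(n-7)$-rectifiability of $\Sigma$.

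\textbf{Step 2: index and stability radius.} Define a stability radius $\regscale(x)$ for $x\in\spt\mass V$ as the largest radius (capped at a fixed small scale) on which $V$ is stable, with respect to the index notion built into $\sclass$. A standard disjointness argument gives the following: if $V$ is unstable in $I+1$ pairwise disjoint balls, there are $I+1$ test functions with disjoint supports and negative second variation, contradicting $\textnormal{index}\le I$. Hence at each scale, the points with $\regscale(x)<r$ are clustered near at most $I$ "bad" centers.

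\textbf{Step 3: covering.} Fix $r$ and run a Vitali-type covering of $\Sigma\cap\nball_{1/2}$ by balls $\nball_{\regscale(x)/10}(x)$, treating separately the points with $\regscale(x)\ge r$ and those with $\regscale(x)<r$.
\begin{itemize}
\item For balls with $\regscale(x)\ge r$, Step 1 bounds the $\nball_{r/8}(\Sigma)$ volume by $C r^8\regscale(x)^{n-7}$.
\item Summing naively over such balls loses control, so I would use the dyadic decomposition in scales of \cite{song.a2023}. The number of stability balls at scale $2^{-k}$ that are not contained in a stable ball at scale $2^{-k+1}$ is at most $C(1+I)$, because each such ball witnesses a new unstable region.
\item The region where $\regscale<r$ is contained in at most $C I$ balls of radius $\sim r$. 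Each of these contributes at most $Cr^{n+1}\le Cr^8$ by volume, and at most $Cr^n\le Cr^7$ to $\mass V$ by the monotonicity bound from $\Lambda$.
\end{itemize}

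The main obstacle is making the scale sum produce $(1+I)r^8$ rather than $(1+I)r^8\log(1/r)$ or worse. I would first follow \cite{aiex-mccurdy-minter2024}, where the dyadic counting is arranged so that the contributions of $\rho^{n-7}$ from Step 1 are controlled by the packing of disjoint stability balls. That packing is bounded by the total volume at each scale, and the scales form a geometric sum. A secondary concern is checking that the PMC error terms do not break the Naber--Valtorta $L^2$-best-plane estimates. They contribute $O(\Gamma r)$ and are summable, so I expect them to be harmless.
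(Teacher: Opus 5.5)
For $n+1\geq 9$ your plan is essentially the paper's proof of Theorem \ref{main theorem n>7}:
\begin{itemize}
\item split $\Sigma$ by $s_V<r$, $s_V\in[r,1]$ (dyadically) and $s_V>1$;
\item cover the $j$-th dyadic piece by at most $CI$ stable balls of radius $\sim 2^{-j}$ (Lemma \ref{covering lemma});
\item use $\varepsilon$-regularity plus Naber--Valtorta on each ball to get $CIr^8 2^{-j(n-7)}$, then sum the geometric series;
\item get the mass bound by packing and monotonicity.
\end{itemize}
However, the statement also covers $n+1=8$, and there your argument fails at exactly the step you flagged. When $n=7$ the factor $\rho^{n-7}$ equals $1$. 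A stable ball then contributes $Cr^8$ (a bounded number of isolated singular points) whatever its radius. So $C(1+I)$ balls at each of the $\sim\log_2(1/r)$ dyadic scales only give $C(1+I)r^8\log(1/r)$, i.e.\ $\haus^0(\Sigma\cap\nball_{1/2})\lesssim(1+I)\log(1/r)$. This is vacuous, because the quantity being bounded is scale invariant. The sum is not geometric. Moreover, your bound of $C(1+I)$ ``new'' balls holds per scale, not cumulatively. For instance, if $V$ is unstable only very near one point $x_0$, then $s_V(y)\approx d^N(y,x_0)$, so every dyadic annulus $A(x_0;2^{-j-1},2^{-j})$ needs its own stable balls, and all of them witness the same unstable region. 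Balls at different scales are nested rather than disjoint, so Lemma \ref{lemma:unstable disjoint sets} gives no control across scales. Nothing in your argument excludes a singular point in each annulus.

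The missing ingredient is Song's cross-scale argument \cite{song.a2023}, which the paper adapts in Section \ref{case n+1=8} to prove Theorem \ref{main theorem n=7}. Its core is the dichotomy of Theorem \ref{thm:song-combinatorial}. At each relevant scale, either there are two points with comparable stability radii and disjoint stability balls, which consumes index, or $V$ on an annulus is $\delta_0$-close to a minimal cone in $\mathcal{G}_{\beta_0}$, which is regular away from its vertex. Chains of such annuli (telescopes) contribute no new singular points, and Song's combinatorics then bounds the count by $C(1+I)$ with no logarithm. Transplanting this to the PMC setting requires three things:
\begin{itemize}
\item the reverse curvature estimate (Lemma \ref{reverse curvature estimate}) and the resulting comparison between $\textbf{r}_{\text{stab}}$ and the radii of pointed conical annuli;
\item working below a scale $\bar r$ depending on $\Gamma$ (or rescaling the metric), so that compactness limits are \emph{minimal} cones, to whose links the Frankel property applies;
\item Bellettini--Wickramasekera's sheeting and higher regularity theorems in place of Schoen--Simon's.
\end{itemize}
As written, your proposal proves Theorem \ref{thm:A} only for $n+1\geq 9$.
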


\textbf{Note:} Theorem \ref{thm:A} is a local result. If $N$ is additionally closed, it can be improved to a global result as in \cite[Theorem C]{aiex-mccurdy-minter2024}.

The class $\sclass_{g,\Lambda,I,\mu,\mu_1}(\nball_2)$ roughly consists of varifolds in $\nball_2$ with prescribed mean curvature $g$, index bounded by $I$, and mass upper bound $\Lambda$ (see Definition \ref{admissible varifolds} for the precise definition); $\mu,\mu_1$ are parameters controlling the PMC functional. These classes include, for instance the PMC hypersurfaces recently constructed by Bellettini--Wickramasekera \cite{BW20}. In comparison to the minimal case, note that the constant $C_0$ now further depends on (a bound for) the prescribing function $g$. Since the constant $C_0$ includes the Naber--Valtorta constant from \cite{naber-valtorta2020}, it remains unclear what its explicit dependency on both the mass of the varifold and the bounds on the mean curvature are. 

When $n+1=3$, it may be possible to show that the dependence on $\Lambda$ can be taken to be linear, similarly to the minimal case shown in \cite{song.a2023}. At least in the CMC case, one would need to prove a quantitative version of Meeks--Perez--Ros removable singularity theorem for CMC surfaces \cite{meeks-perez-ros2016}*{Theorem 1.2} as was done in the minimal case in \cite{song.a2023}*{Theorem 14}, which in turn depends on the corresponding area-independent curvature estimates in \cite{meeks-perez-ros2016}*{Theorem 1.3}.
The case when the mean curvature is allowed to change sign is less clear, as we are not aware of any area-independent curvature estimates in the literature yet.

The proofs of Theorem \ref{thm:A-0} and Theorem \ref{thm:A} follow closely those used in the corresponding arguments in \cite{song.a2023,aiex-mccurdy-minter2024} once one has the correct set-up and regularity results in the PMC setting. Our focus will therefore be on these differences. Since the core of the argument in \cite{aiex-mccurdy-minter2024} is presented when the ambient space is Euclidean, here we present the full proof directly in the Riemannian setting. For Theorem \ref{thm:A-0} and the $n+1=8$ case of Theorem \ref{thm:A} (where one cannot argue as in \cite{aiex-mccurdy-minter2024} due to a lack of scaling of the measure), we will briefly detail the necessary (minor) adaptations of Song's argument to the PMC setting.

\textbf{Organization:} In Section \ref{sec:prelim} we give the precise description of the class of admissible varifolds with prescribed mean curvature which our theorem applies to, including details regarding the decomposition of the singular set and facts about such varifolds with bounded index and the stability radius. In Section \ref{sec:quantitative} we then briefly review the quantitative stratification of the singular set of such a varifold as well as Naber--Valtorta's main theorem. In Section \ref{sec:main theorem} we prove Theorem \ref{thm:A} when $n+1\geq 9$. In Section \ref{case n+1=8} we include the modifications to Song's work \cite{song.a2023} needed to prove Theorem \ref{thm:A} when $n+1=8$ and Theorem \ref{thm:A-0}.

\textbf{Acknowledgments:} NSA was supported through the grant with No.~113-2115-M-003-001-MY2 by the National Science and Technology Council. This research was conducted during the period that PM was a Clay Research Fellow.

\section{Preliminaries}\label{sec:prelim}

We fix throughout $\alpha\in(0,1]$, $n+1\geq 3$ a positive integer, and $(N,h)$ a smooth Riemannian manifold of dimension $n+1$ with Riemannian metric $h$. We write $\eball_r(p)\subset\real^{n+1}$ for the Euclidean ball of radius $r>0$ centered at $p\in\real^{n+1}$ and $\nball_r(x)\subset N$ for the intrinsic ball of radius $r$ centered at $x\in N$ defined by the distance induced by $h$.

Given an open subset $U\subset N$ we denote by $\ivarifolds_n(U)$ the space of integral $n$-varifolds in $U$. If $R\subset U$ is a $\haus^n$-rectifiable set and $\theta:U\rightarrow\real$ is a non-negative $\haus^n$-measurable function which is locally integrable on $U$ with respect to $\haus^n$ we denote the induced $n$-varifold of density $\theta$ by $\setv(R,\theta)$. For $p\in\real^{n+1}$ and $\lambda>0$ we also define $\eta_{p,\lambda}:\real^{n+1}\rightarrow\real^{n+1}$ to be the map $\eta_{p,\lambda}(x):=\lambda^{-1}(x-p)$.

\subsection{Prescribed Mean Curvature Varifolds}

Following \cite{bellettini-wickramasekera2019:arxiv}, we first give a variational description of immersed hypersurfaces with mean curvature prescribed by a given function.

Let $U\subset N$ be an open set. If $S$ is a $n$-dimensional manifold and $\iota:S\rightarrow U$ is a $C^1$ immersion then we identify sections of the vector bundle $\iota^*TN\rightarrow S$ with maps $S\rightarrow TN$ without explicitly writing the pushforward by the corresponding immersion, and similarly for vector bundles derived from $\iota^*TN$ (such as the normal bundle of $S$).

\begin{definition}\label{admissible sets and variations}
    Let $\iota:S\to U$ be a $C^1$ immersion as above. We say that $\O\subset U$ is an admissible open set if:
    \begin{enumerate}
        \item [\textnormal{(o1)}] $\O$ is an oriented open set with compact closure contained in $U$;
        \item [\textnormal{(o2)}] $S_\O := \iota^{-1}(\O)$ is orientable with compact closure in $S$.
    \end{enumerate}
    We then say that a $C^1$ map $\psi:(-\varepsilon,\varepsilon)\times S\to U$ is an admissible variation (here, $\varepsilon>0$), and that $(\O,\psi)$ is a variation pair, if in addition we have:
    \begin{enumerate}
        \item [\textnormal{(v1)}] $\psi(0,y) = \iota(y)$ for all $y\in S$;
        \item [\textnormal{(v2)}] $\psi_t(\cdot) := \psi(t,\cdot)$ is an immersion for all $t\in (-\varepsilon,\varepsilon)$;
        \item [\textnormal{(v3)}] $\psi_t(S_\O)\subset \O$ for all $t\in (-\varepsilon,\varepsilon)$;
        \item [\textnormal{(v4)}] $\spt\,\psi := \overline{S\setminus \{y\in S:\psi_t(y) = \iota(y) \text{ for all }t\in (-\varepsilon,\varepsilon)\}}$ is compact and contained in $S_\O$.
    \end{enumerate}
\end{definition}
Given a variation pair $(\O,\psi)$, we will always assume that we choose the orientations of $S_\O$ and $\O$ as well as sections $\nu_\O^t$ of the normal bundles of $\psi_t:S_\O\to U$ such that $d\psi_t(\vec{S}_\O)\wedge \nu_\O^t$ is positive with respect to the orientation of $\O$.
\begin{definition}
    Given a variation pair $(\O,\psi)$, we say that $\psi$ is an ambient deformation if
    $$h\left(\partial_t\psi_t\big|_{t=0},  \nu_\O^0(y)\right) = \phi(\iota(y))$$
    for some $\phi\in C^1_c(\O)$ and all $y\in S_\O$. In this case, we say that $\psi$ is induced by $\phi$.
\end{definition}
\begin{remark}
    When $S_\O$ is embedded, ambient deformations and normal deformations are in one-to-one correspondence. However, when $S_\O$ is not embedded then ambient deformations comprise a larger set of variations. Furthermore, in this situation deformations induced by the flow of an ambient vector field (i.e.~a vector field on $N$) are not considered ambient deformations as described above. Indeed, the ambient deformations above allow tangential self-intersections to be deformed onto local embeddings if the normal vector is opposite on a pair of components.
\end{remark}

We can now define the relative enclosed $g$-volume with respect to a variation pair.

\begin{definition}[\cite{bellettini-wickramasekera2019:arxiv}*{Definition 1.1}]
Suppose $g\in C^{1,\alpha}(N)$, $S$ is an immersed hypersurface in $N$, and $(\O,\psi)$ is a variation pair. We define the enclosed $g$-volume as
\begin{equation*}
\begin{aligned}
\vol_g(t) & = \int_{I_t\times S_\O}\psi^*(g\, d\vol_\O),\\
\end{aligned}
\end{equation*}
where $I_t=[0,t]$ if $t>0$ and $[t,0]$ if $t<0$. We say $\psi$ is $g$-volume preserving if $\vol_g(t)$ is constant.
\end{definition}

We include the definition of parametric elliptic functionals as in \cites{schoen-simon1981,bellettini-wickramasekera2019:arxiv}.

\begin{definition}[\cite{bellettini-wickramasekera2019:arxiv}*{Definition 6.1}]\label{elliptic functional}
Let $\mu,\mu_1,\rho>0$. We define the class $\tilde{\mathcal{I}}(\mu,\mu_1,{\rho})$ of parametric elliptic integrands as the functions $\tilde{F}:\eball_{\rho}(0)\times\real^{n+1}\setminus\{0\}\rightarrow\real$ satisfying:
\begin{enumerate}[(1)]
\item [\textnormal{(1)}] $z\mapsto\tilde{F}(z,p)$ is of class $C^{2,\alpha}$ for some $\alpha\in(0,1)$ for all $p\in\real^{n+1}\setminus\{0\}$ and $p\mapsto\tilde{F}(z,p)$ is of class $C^3$ for all $z\in\eball_\rho(0)$; and

\item [\textnormal{(2)}] For each $y\in\eball_{\rho/2}(0)$ there exists $v(y)\in\real^{n+1}$ and a $C^2$ diffeomorphism $\eta_y:\eball_\rho(0)\to \eball_\rho(0)$ such that $\eta_0=\textup{id}$, $\eta_y(0)=y$, and if $\hat{F}_{v(y)}(z,p)=\tilde{F}(z,p)+v(y)\cdot p$ then $\eta_y^\#\hat{F}_{v(y)}$ satisfies:
\begin{itemize}
\item $\eta_y^\#\hat{F}_{v(y)}(z,\lambda p)=\lambda\eta_y^\#\hat{F}_{v(y)}(z,p)$ for all $\lambda>0$ and $(z,p)\in\eball_{\rho/2}(0)\times\real^{n+1}\setminus \{0\}$;
\item $\mu^{-1}\leq \eta_y^\#\hat{F}_{v(y)}(z,\nu)\leq \mu$ and $|D^{I}_p\eta_y^\#\hat{F}_{v(y)}(z,\nu)|\leq \mu$ for all $(z,\nu)\in\eball_{\rho/2}(0)\times \S^n$ and multi-index $I$ with $|I|\leq 3$;
\item $|D^I_zD^J_p\eta_y^\#\hat{F}_{v(y)}(x,\nu)|\leq\mu_1^{|I|}$ for all $(x,\nu)\in\eball_{\rho/2}(0)\times \S^n$ and multi-indices $I,J$ with $|I|+|J|\leq 3$ and $0<|I|\leq 2$.
\end{itemize}
\end{enumerate}
Now suppose instead $F:TN\setminus\{(x,0):x\in N\}\to \R$. We say that $F$ defines a parametric elliptic functional on $N$ if there exist $\mu,\mu_1>0$ such that for all $x\in N$ and $0<\rho<\inj(x)$ we have $\tilde{F}\in \tilde{\mathcal{I}}(\mu,\mu_1,{\rho})$, where $\tilde{F}(z,p) := F(\exp_x(z),D\exp_x|_z(p))$. We denote by $\mathcal{I}(N,\mu,\mu_1)$ the class of parametric elliptic functionals on $N$ satisfying these conditions.

Finally, {given $F\in \mathcal{I}(N,\mu,\mu_1)$,} whenever $S$ is a manifold of dimension $n$, $\iota:S\to N$ is an immersion of class $C^1$, and $\O$ is an admissible open set, we write (where $\iota_\O=\iota|_{S_\O}$)
\begin{equation*}
\mathcal{F}(\iota;\O):=\int_{S_\O}F(\iota_\O(y),\nu_\O(y))\, d\vol_{S_\O}.
\end{equation*}
\end{definition}

\begin{remark}
The area functional in $N$ corresponds to $F(z,p)=|\Lambda_n p^\perp_\#|$.
\end{remark}

\begin{definition}
Let $U\subset N$ be an open set, $\F$ be a parametric elliptic functional on $U$, $\iota:S\rightarrow U$ be a $C^1$ immersion, and $(\O,\psi)$ be a variation pair.
We define the functional
\begin{equation*}
\jfg(t)= \F(\psi_t;\O)+\vol_g(t).
\end{equation*}
We say that $S$ is $\jfg$-stationary in $\O$ if $\left.\frac{d}{dt}\jfg(t)\right|_{t=0}=0$ for every admissible variation $\psi$ supported in $\O$.
\end{definition}

{Note that the notation $\jfg(t)$, $\vol_g(t)$ both suppress the dependence on the variation pair $(\O,\psi)$.}

\begin{remark}
If $\F$ is the area functional and $\iota:S\rightarrow U$ is an immersed hypersurface of class $C^2$ that is $\jfg$-stationary with respect to every variational pair $(\O,\psi)$ supported in $U$, then it follows from the first variation formula that the mean curvature of $S_\O$ is $\vec{H}(y)=g(\iota(y))\nu_\O(y)$ for all $y\in S_\O$.
\end{remark}

\begin{definition}\label{jfg stationary immersion}
Let $\iota:S\rightarrow U$ be an immersed hypersurface of class $C^2$, $\O\subset U$ an admissible set, and $I$ be a non-negative integer.
Suppose $S$ is $\jfg$-stationary in $\O$.

We say that $S$ has strong index bounded by $I$ in $\O$ with respect to ambient deformations if for all subspaces $P\subset\csvf(\O)$ of dimension $I+1$ there exists $\phi\in P$, $\phi\neq 0$ and for all $\psi:(-\varepsilon,\varepsilon)\times S\rightarrow U$ admissible variations of $S$ induced by $\phi$ we have $\frac{d^2}{dt^2}\big|_{t=0}\jfg(t)\geq 0$.
In this case we write
\begin{equation*}
\indexa(S;\O)\leq I.
\end{equation*}
We define the strong index of $S$ in $\O$ as
\begin{equation*}
\indexa(S;\O)  := \inf\{I\in\integer:\indexa(S;\O)\leq I\}.
\end{equation*}
We say $S$ is strongly stable in $\O$ if $\indexa(S;\O)= 0$.

We say that $\indexa(S;U)\leq I$ if $\indexa(S;\O)\leq I$ for all admissible open sets $\O\subset U$ and we define $\indexa(S;U)$ as above.
\end{definition}

\begin{remark}\label{index lower bound}
According to the above definition, $\indexa(S;\O)\geq I$ is equivalent to the existence of a plane $P\subset\csvf(\O)$ of dimension $I$ such that for all $\phi\in P$ there exists a variation induced by $\phi$ such that $\left.\frac{d^2}{dt^2}\right|_{t=0}\jfg(t) < 0$.
\end{remark}

\begin{remark}
Since one could define a more classical version of Morse index with respect to all possible variations, we make a notational distinction with a subscript.
Nevertheless, for the purpose of our applications the notion of Morse index restricted to ambient deformations will be sufficient.
\end{remark}

Next we prove that immersed hypersurfaces of bounded index cannot be unstable on too many disjoint open subsets.
This is the equivalent of \cite{aiex-mccurdy-minter2024}*{Lemma $2.3$} for $\jfg$-stationary hypersurfaces.

\begin{lemma}\label{lemma:unstable disjoint sets}
Let $U\subset N$ be an open set, $S$ an immersed hypersurface of class $C^2$ in $U$, and $\O_1,\O_2\subset U$ be admissible sets with $\O_1\cap\O_2=\emptyset$.
Suppose $S$ is $\jfg$-stationary in $\O=\O_1\cup\O_2$, then
\begin{equation*}
\indexa(S;\O_1)+\indexa(S;\O_2)\leq \indexa(S;\O)
\end{equation*}
\end{lemma}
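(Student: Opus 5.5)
We will use the characterization of index lower bounds in Remark \ref{index lower bound}, and build a large destabilizing subspace on $\O$ from destabilizing subspaces on $\O_1$ and $\O_2$. If either index on the left is infinite, the same construction with arbitrarily large subspaces gives $\indexa(S;\O)=\infty$, so assume both are finite. Set $I_j:=\indexa(S;\O_j)$. By Remark \ref{index lower bound} there are subspaces $P_j\subset\csvf(\O_j)$ with $\dim P_j=I_j$ such that every $\phi_j\in P_j$ (nonzero, as implicitly intended) admits an admissible variation $\psi^j$ of $S$, induced by $\phi_j$ and with $\spt\psi^j\subset S_{\O_j}$, satisfying $\frac{d^2}{dt^2}\big|_{t=0}\jfg(t)<0$. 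Extend each element of $\csvf(\O_j)$ by zero to $\O$. Because $\O_1\cap\O_2=\emptyset$, the subspaces $P_1$ and $P_2$ meet only in $0$, and $P:=P_1\oplus P_2\subset\csvf(\O)$ has dimension $I_1+I_2$.

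Next take $\phi=\phi_1+\phi_2\in P$, not identically zero. The case $\phi_2=0$ is the harder one, so treat first the case where both $\phi_1,\phi_2$ are nonzero. Let $\psi^1,\psi^2$ be the destabilizing variations given above. Define $\psi(t,y)=\psi^j(t,y)$ for $y\in S_{\O_j}$ and $\psi(t,y)=\iota(y)$ otherwise. Then check (v1)–(v4) for $\psi$ with respect to $\O$. The supports $\spt\psi^j$ are compact subsets of the disjoint sets $S_{\O_1},S_{\O_2}$, so $\psi$ is $C^1$ and is an immersion for each $t$. Condition (v3) holds because each $\psi^j_t$ maps $S_{\O_j}$ into $\O_j$. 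The orientation of $\O$ restricts to those of $\O_j$, and the normal $\nu^0_\O$ agrees with $\nu^0_{\O_j}$ on $S_{\O_j}$, so $\psi$ is induced by $\phi$. This assumes the orientations are chosen compatibly, which we may do since the $P_j$ may be taken relative to the restricted orientations.

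The key additivity step is that both pieces of $\jfg$ split over the two sets. The area-type term satisfies $\F(\psi_t;\O)=\F(\psi^1_t;\O_1)+\F(\psi^2_t;\O_2)$, since $S_\O=S_{\O_1}\sqcup S_{\O_2}$ and the integrand depends only on the local immersion and normal. The volume term $\vol_g(t)$ is an integral over $I_t\times S_\O$, so it splits in the same way. Hence $\jfg^{\O}(t)=\jfg^{\O_1}(t)+\jfg^{\O_2}(t)$, and the second derivative at $0$ is a sum of two negative numbers, hence negative. (Stationarity in $\O$ gives stationarity in each $\O_j$, so the first-order terms are consistent.)

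When $\phi_2=0$, the variation given for $\phi_1$ already works, after extending it by the identity on $S\setminus S_{\O_1}$. It is still a variation induced by $\phi$ on $\O$, and its $\O_2$-part contributes nothing. So every nonzero $\phi\in P$ has a destabilizing induced variation, and Remark \ref{index lower bound} gives $\indexa(S;\O)\geq I_1+I_2$. The main obstacle is this bookkeeping: checking that admissibility and the orientation and normal conventions are preserved under gluing, and that the remark's equivalence applies in the form used. The splitting itself is immediate from disjointness.
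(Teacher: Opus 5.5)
Your proposal is correct and follows the paper's argument: form $P_1\oplus P_2\subset\csvf(\O)$, then use $\O_1\cap\O_2=\emptyset$ to split the second variation of $\jfg$ on $\O$ into the sum of the negative second variations on $\O_1$ and $\O_2$. The only difference is direction: the paper cuts an arbitrary variation induced by $\phi_1'\oplus\phi_2'$ into two pieces, which tacitly assumes that \emph{every} variation induced by $\phi_i'$ destabilizes, whereas you glue the destabilizing variations supplied by Remark~\ref{index lower bound}, which matches the existential form of that remark and also handles explicitly the case where one component vanishes.
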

\begin{proof}
For each $i=1,2$, let $J_i:=\indexa(S;\O_i)$.
In view of Remark \ref{index lower bound} we may assume that there exists a pair of planes $P_i\subset\csvf(\O_i)$ of dimension $J_i$ such that 
$\left.\frac{d^2}{dt^2}\right|_{t=0}\jfg(\psi^i_t)<0$
for all variation $\psi^i$ induced by $\phi_i'\in P_i$.

We can identify $P_1\oplus P_2$ as a subspace in $\csvf(\O)$.
Given $\phi_1'\oplus\phi_2'\in P_1\oplus P_2$ and $\psi$ a variation induced by $\phi_1'\oplus\phi_2'$ we can use a cut-off function to decompose it into two variations $\psi^i_t$ induced by $\phi_i'$ for each $i=1,2$.
Since $\O_1\cap\O_2=\emptyset$, it follows that $\left.\frac{d^2}{dt^2}\right|_{t=0}\jfg(\psi_t)=\left.\frac{d^2}{dt^2}\right|_{t=0}\jfg(\psi^1_t)+\left.\frac{d^2}{dt^2}\right|_{t=0}\jfg(\psi^2_t)<0$, which proves that $\indexa(S;\O)\geq J_1+J_2$.
\end{proof}

In the following lemma we will prove that if $S$ has bounded index then it is locally strongly stable (cf.~\cite{bellettini-wickramasekera2019:arxiv}*{Section 2.2}). 

\begin{lemma}\label{lemma:locally strongly stable}
Let $n\geq 2$, $I\in \{0,1,2,\dotsc\}$ be a non-negative integer, and $U\subset N$ be an open set. Suppose $S$ is an immersed hypersurface of class $C^2$ in $U$, and that $x\in U$, $0<\rho<\inj(x)$ are such that $\nball_\rho(x)\subset U$.
Suppose $S$ is $\jfg$-stationary in $\nball_\rho(x)$ and $\indexa(S;\nball_\rho(x))\leq I$.
There exists $0<\bar\rho\leq\rho$ such that $S$ is strongly stable on $\nball_{\sigma}(x)$ for all $0<\sigma\leq\bar{\rho}$.
\end{lemma}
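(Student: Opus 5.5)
The plan is to argue by contradiction, using the superadditivity of the index over disjoint admissible sets from Lemma \ref{lemma:unstable disjoint sets} together with monotonicity of $\indexa$ under inclusion. First, if $\O'\subset\O$ are admissible, then $\indexa(S;\O')\leq\indexa(S;\O)$. This holds because $\csvf(\O')\subset\csvf(\O)$, and a variation induced by $\phi\in\csvf(\O')$ is also an admissible variation relative to $\O$. Hence any negative-definite plane for $\O'$, in the sense of Remark \ref{index lower bound}, is one for $\O$. In particular, the map $\sigma\mapsto\indexa(S;\nball_\sigma(x))$ is nondecreasing. It is therefore enough to find a single $\bar\rho$ with $\indexa(S;\nball_{\bar\rho}(x))=0$. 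Indeed, for $\sigma\leq\bar\rho$ we then get $0\leq\indexa(S;\nball_\sigma(x))\leq 0$.

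Suppose instead that $\indexa(S;\nball_\sigma(x))\geq 1$ for every $0<\sigma\leq\rho$. Choose radii $\rho>r_1>r_2>\dots>r_{2I+2}>0$ and form the disjoint open annuli $A_j:=\nball_{r_{2j-1}}(x)\setminus\overline{\nball_{r_{2j}}(x)}$. I would not try to show directly that each annulus is unstable. Instead, I would use the instability of small balls together with compact supports. Since $\nball_{r_1}(x)$ is unstable, there is a $\phi_1\in\csvf(\nball_{r_1}(x))$ every induced variation of which has negative second variation. The support of $\phi_1$ is compact, but it may contain $x$. To deal with this, take $r_2$ small and use a capacity-type cutoff: because $n\geq 2$, a point has zero capacity for the second-variation quadratic form, whose gradient term lives on the $n$-dimensional $S$. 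Multiplying $\phi_1$ by a logarithmic cutoff that vanishes near $x$ changes the second variation by an arbitrarily small amount. After this modification, $\phi_1$ can be taken supported in an annulus $A_1$ avoiding a small ball $\nball_{r_2}(x)$.

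Next, apply the same reasoning inside $\nball_{r_3}(x)$ with $r_3<r_2$ to obtain $\phi_2$ supported in $A_2$, and continue inductively to get $I+1$ functions with pairwise disjoint supports. Each annulus is admissible: it is oriented, and it has compact closure in $U$ since $\nball_\rho(x)$ lies inside a normal neighbourhood. Provided also that $S_{A_j}$ is orientable with compact closure, which I would arrange by slightly perturbing the radii, Lemma \ref{lemma:unstable disjoint sets} applied iteratively gives
\begin{equation*}
\indexa(S;\nball_\rho(x))\geq\sum_{j=1}^{I+1}\indexa(S;A_j)\geq I+1,
\end{equation*}
which contradicts the hypothesis $\indexa(S;\nball_\rho(x))\leq I$.

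I expect the main obstacle to be the cutoff step, namely showing that negativity of the second variation of $\jfg$ survives multiplication by a log cutoff near $x$. This requires writing $\frac{d^2}{dt^2}\jfg$ for an ambient deformation induced by $\phi$ as a quadratic form of the type $\int_{S}(a(\nabla\phi,\nabla\phi)+b\phi\nabla\phi+c\phi^2)$ with bounded coefficients; for $g$ one needs the $g$-volume second variation, which involves $\partial_\nu g\,\phi^2$ and $g H\phi^2$. It also requires local area bounds $\H^n(S\cap\nball_r(x))\lesssim r^n$, coming from the monotonicity formula, so that the log cutoff has small Dirichlet energy when $n\geq2$. If this turns out to be delicate because of tangential self-intersections, the fallback is to cite the local-stability argument in \cite{bellettini-wickramasekera2019:arxiv}*{Section 2.2}.
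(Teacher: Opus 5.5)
Your proof is correct and uses the same two ingredients as the paper's. These are superadditivity of $\indexa$ over disjoint admissible sets (Lemma \ref{lemma:unstable disjoint sets}) and a logarithmic cut-off at $x$, which is exactly where $n\geq 2$ enters and which the paper only calls ``a standard cut-off argument''. The difference is that you run them in contrapositive order. The paper uses that $\sigma\mapsto\indexa(S;\nball_\sigma(x))$ is eventually constant to conclude that every annulus $\nball_{\rho_1}(x)\setminus\overline{\nball_{\rho'}(x)}$ is strongly stable, and then cuts off at $x$ to get stability of the whole ball. You instead cut off to push the instability of each small ball into an annulus, and stack $I+1$ such annuli.

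One minor remark: the local area bound your cut-off needs is immediate and requires no monotonicity formula. By (o2), only finitely many $C^2$ sheets of the immersion pass through $x$.
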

\begin{proof}
Suppose this were false, that is, for all $\rho'>0$ we have $\indexa(S;\nball_{\rho'}(x))\geq 1$. First we observe that since $\indexa(S;\nball_{\rho'}(x))$ is non-decreasing in $\rho'$, hence there exists $\rho_1>0$ such that $\indexa(S;\nball_{\rho'}(x))=\indexa(S;\nball_{\rho_1}(x))$ is constant for all $0<\rho'\leq\rho_1$.

It therefore follows from Lemma \ref{lemma:unstable disjoint sets} that $\indexa(S;\nball_{\rho_1}(x)\setminus\overline{\nball_{\rho'}(x)})=0$ for all $0<\rho'<\rho_1$.
We may use a standard cut-off argument to see that $\indexa(S;B^N_{\rho_1}(x)) = 0$, which therefore gives a contradiction and thus concludes the proof. 
\end{proof}

\subsection{Regular and Singular Sets of a Varifold}
\hfill

Henceforth, let $U\subset N$ be an open set and $V\in\ivarifolds_n(U)$ an integral $n$-varifold in $U$.
The following definitions are the same as \cite{bellettini-wickramasekera2019:arxiv}*{Definition 1.3 -- Definition 1.7}.

\begin{definition}[$C^k$ Regular Set]\label{reg set}
Let $k\in \{1,2,\dotsc\}\cup\{\infty\}$. We say $x\in\spt\mass{V}$ is a $C^k$ regular point of $V$ if there exists $r>0$ such that $\spt\mass{V}\cap\nball_r(x)$ is a $C^k$ embedded hypersurface in $\nball_r(x)$.

Write $\reg_k\,V$ for the set of all regular points of $V$ of class $C^k$. We also write $\sing\, V := \spt\|V\|\setminus\reg_2\, V$ for the set of singular points of $V$.
\end{definition}

We note that singular points may correspond to points of lower regularity or even to smooth points where the varifold is locally represented by an immersion. Since we will be interested in PMC hypersurfaces where self-touching points may occur, we will further decompose the singular set.
\begin{definition}[Classical Singularities]\label{classical singularities}
We say that $x\in\sing\,V$ is a classical singularity of $V$ if there exists $r>0$ such that $\spt\mass{V}\cap\nball_r(x)$ can be written as the union of at least three embedded hypersurfaces-with-boundary of class $C^{1,\alpha}$ for some $\alpha\in(0,1]$ with pairwise intersection only on their common $C^{1,\alpha}$ boundary containing $x$ and such that at least one pair of intersections is transverse.

We write $\sing_C\,V$ for the set of all classical singularities.
\end{definition}

\begin{center}
\begin{tikzpicture}[scale=0.8]

\draw (-2.5,3.5) node (v1) {} .. controls (-1.5,3.5) and (-0.5,2.5) .. (-0.5,1.5) node (v3) {};
\draw (0,3.5) node (v2) {} .. controls (1,3.5) and (2,2.5) .. (2,1.5) node (v4) {};
\draw  (v1) edge (v2);
\draw[very thick]  (v3) edge (v4);
\draw (-0.5,1.5) .. controls (0,0.5) and (1,0) .. (1.5,0.5) node (v5) {};
\draw (2,1.5) .. controls (2.5,0.5) and (3.5,0) .. (4,0.5) node (v6) {};
\draw  (v5) edge (v6);
\draw (-0.5,1.5) .. controls (-1.5,1.5) and (-2.5,0.5) .. (-2.5,-0.5) node (v7) {};
\draw[dashed] (2,1.5) .. controls (1,1.5) and (0,0.5) .. (0,-0.5) node (v8) {};
\draw  (v7) edge (v8);

\draw (7,3) node (v9) {} -- (7,-0.5) -- (9,0.25) -- (9,3.5) -- (v9);
\draw (6.5,2) node (v10) {} -- (11,2) -- (9.5,1) -- (5,1) -- (v10);
\draw[very thick] (9,2) -- (7,1);
\node[label={[label distance = -0.2 cm]30:$\textnormal{sing}_C\,V$}] at (2,1.5) {};
\node[label={[label distance = -0.2 cm]85:$\textnormal{sing}_C\,V$}] at (9,2) {};
\end{tikzpicture}
\end{center}

\begin{definition}[Touching Singularities]\label{touching singularities}
We say that $x\in\sing\,V\setminus (\reg_1\,V\cup\sing_C\,V)$ is a touching singularity if  there exist $r>0$ and two embedded hypersurfaces $M_1,M_2$ in $\nball_r(x)$ of class $C^{1,\alpha}$ (for some $\alpha\in(0,1]$) such that $\spt\mass{V}\cap\nball_r(x)=M_1\cup M_2$.
For $r'<r$, the coincidence set of $V$ in $\nball_{r'}(x)$ is defined as $C_{x,r'}=M_1\cap M_2\cap\nball_{r'}(x)$.

We write $\sing_T\,V$ for the set of all touching singularities of $V$.

\end{definition}

\begin{center}
\begin{tikzpicture}[scale=0.8]

\draw (0,1) node (v3) {} .. controls (0,2) and (1,3) .. (2,3) .. controls (3,3) and (4,2) .. (4,1) node (v5) {};
\draw[dashed] (2,1) node (v4) {} .. controls (2,2) and (3,3) .. (4,3);
\draw (4,3) .. controls (5,3) and (6,2) .. (6,1) node (v6) {};

\draw (0,5) node (v7) {} .. controls (0,4) and (1,3) .. (2,3) node (v1) {} .. controls (3,3) and (4,4) .. (4,5) node (v9) {};
\draw (2,5) node (v8) {} .. controls (2,4) and (3,3) .. (4,3) node (v2) {} .. controls (5,3) and (6,4) .. (6,5) node (v10) {};

\draw[very thick]  (v1) edge (v2);
\draw  (v3) edge (v4);
\draw  (v5) edge (v6);
\draw  (v7) edge (v8);
\draw  (v9) edge (v10);
\node[label={[label distance = 0.4 cm]0:$\textnormal{sing}_T\,V$}] at (4,3) {};

\draw (9.5,4.5) .. controls (9.5,3.5) and (10.5,3) .. (11.5,3) .. controls (12.5,3) and (12,3) .. (13,3);
\node (v11) at (11.5,3) {};

\draw[fill]  (v11) circle (0.1);

\draw (9.5,1.5) .. controls (9.5,2.5) and (10.5,3) .. (11.5,3);
\node[label={[label distance = -0.1 cm]90:$\textnormal{sing}_T\,V$}] at (11.5,3) {};
\end{tikzpicture}
\end{center}

\begin{remark}\label{graph remark}
Observe that if $x\in\sing_T\,V$ and $M_1,M_2$ are as above, then $x\not\in\reg_1\,V$ implies that $x\in M_1\cap M_2$ and $x\not\in\sing_C\,V$ implies $T_xM_1=T_xM_2=T$.
In particular, if we choose $r<\inj(x)$, then for $i=1,2$ we can find $C^{1,\alpha}$ functions $u_i:\eball^n_r(0)\cap T\rightarrow \real$ satisfying $u_1(0)=u_2(0)=0$ and $Du_1(0)=Du_2(0)=0$ such that $M_i=\{\exp_x(u_i(q)\nu)\in\nball_r(x):q\in\eball_r^n(0)\cap T\}$, where $\nu\in T_xN$ is a normal vector to the plane $T$. Furthermore, when $V$ has no classical singularities we may arrange so that $u_1\leq u_2$.
\end{remark}

\begin{definition}[Generalized Regular Set]\label{gen-reg set}
We say that $x\in\spt\mass{V}$ is a generalized regular point if either $x\in\reg_2\,V$ or $x\in\sing_T\,V$ and we may choose $u_1,u_2$ as in Remark \ref{graph remark} to be $C^2$ with $u_1\leq u_2$ (in particular, there are no classical singularities limiting to $x$).

We write $\genreg\,V$ for the set of all generalized regular points of $V$.
\end{definition}

\begin{remark}
Observe that $\genreg\,V$ can be identified with a $C^2$ immersion $\iota:S\rightarrow U$.
\end{remark}

Finally we define the class of varifolds for which our main theorem holds.

\begin{definition}[\cite{bellettini-wickramasekera2019:arxiv}*{Definition 6.2}]\label{admissible varifolds}
Let $U\subset N$ be an open set, $g:U\rightarrow\real$ be a function of class $C^{1,\alpha}$, $I\in \{0,1,2,\dotsc\}$ be a non-negative integer, and $\mu,\mu_1,\Lambda>0$.
We define the class $\sclass_{g,I}(U) \equiv \sclass_{g,\Lambda,I,\mu,\mu_1}(U) \subset \ivarifolds_n(U)$ of integral varifolds $V$ such that for some parametric elliptic functional $F\in\mathcal{I}(N,\mu,\mu_1)$ we have:
\begin{enumerate}
\item [\textnormal{(s1)}] $\mass{\delta V}$ is a Radon measure, $\mass{\delta V}_{\textnormal{sing}}=0$, $|H(V)|\in\lploc{q}(U;\mass{V})$ for some $q>n$, where $H(V)$ is the generalized mean curvature of $V$, and $\mass{V}(U)\leq \Lambda$;
\item [\textnormal{(s2)}] $\sing_C\,V=\emptyset$;
\item [\textnormal{(s3)}] For every $x\in\sing_T\,V$ there exists $r'>0$ such that $\haus^n(C_{x,r'}\cap \{g\neq 0\})=0$;
\item [\textnormal{(s4)}] The embedded hypersurface $\reg_1\,V$ is $\jfg$-stationary;
\item [\textnormal{(s5)}] For every $x\in\sing_T\,V$ and $C^{1,\alpha}$ functions $u_1\leq u_2$ as in Remark \ref{graph remark}, for $i=1,2$ the $\jfg$-stationarity of the embedded hypersurface $\reg_1\,V\cap\graph(u_i)$ (which follows from \textnormal{(s4)}) holds for the orientation which agrees with one of the two possible orientations of $\graph(u_i)$;
\item [\textnormal{(s6)}] The set $\genreg\,V$ is the image of an immersion $\iota:S\rightarrow U$ of class $C^2$ such that for every simply connected open set $\Omega\subset\subset U$ and relatively closed $Z\subset\Omega$ with $\haus^{n-7+\beta}(Z)=0$ for all $\beta\in(0,1)$, for $\O=\Omega\setminus Z$ we have:
\begin{enumerate}
\item [\textnormal{(i)}] $V\restrictv\O = \setv(\iota_O(S_\O),\theta_{\iota_O})$, where $\theta_{\iota_\O}(x) :=|\iota_\O^{-1}(x)|$ whenever $x\in S_\O$ and $\theta_{\iota_{\O}}(x) := 0$ otherwise;
\item [\textnormal{(ii)}] $\iota_{\O}(S_\O)$ is an orientable proper $\jfg$-stationary immersion in $\O$;
\item [\textnormal{(iii)}] $\indexa(S_\O;\O)\leq I$.
\end{enumerate}
\end{enumerate}
\end{definition}

For the rest of this section, we fix $U\subset N$ an open set, $g:U\to \R$ a $C^{1,\alpha}$ function, $I\in \integer_{\geq 0}$, and $\mu,\mu_1,\Lambda>0$. As suggested in our notation, often we omit the dependence of $\sclass_{g,I}$ on $\Lambda,\mu,\mu_1$.

\begin{remark}\label{remark:app-A}
Observe that condition $\textnormal{(s5)}$ is redundant when $g>0$.
If in addition to $g>0$ we assume that at every touching singularity the two graphs given by Remark \ref{graph remark} have constant integer multiplicity, then condition $\textnormal{(s6)(i)}$ is also redundant (see \cite{bellettini-wickramasekera2019:arxiv}*{Page 8, Condition (\textbf{m})}).
We further note that the open set $\O$ in condition $\textnormal{(s6)}$ is simply connected and therefore orientable (see Corollary \ref{k-connected}) so that it is an admissible set as in Definition \ref{admissible sets and variations}.
\end{remark}

\begin{remark}
In general one could define a notion of \emph{weak index} (denoted by $\windexa$) for such hypersurfaces, where in the notion of index we only take into account test functions functions $\phi\in C^1_c(\O)$ which obey $\int_{S_{\O}} \phi(y) g(y)\,d\vol_{S_{\O}}(y)=0$ (balanced functions) as well as variations which are $g$-volume preserving, and replace $\textnormal{(s6)(iii)}$ with weak index to define an alternative class of varifolds. However, when the function $g$ is allowed to change sign it is not always possible to guarantee the existence of $g$-volume preserving variations for every balanced function (see \cite{bellettini-wickramasekera2019:arxiv}*{Remark 1.11}).

If we assume that the function $g$ has a sign (i.e.~$g>0$ or $g<0$ everywhere) then $g$-volume preserving variations always exist (see \cite{barbosa-docarmo-eschenburg1988}*{Lemma 2.2}) so the weak index is not vacuously zero.
In this case, defining the above class with respect to weak index bounds would not change our main result.
Indeed one can easily prove the equivalent version of Lemma \ref{lemma:unstable disjoint sets} and that the strong index and weak index are related by $\windexa\leq\indexa\leq\windexa+1$, from which it readily follows that finite weak index implies locally strongly stable. The proof of Theorem \ref{main theorem n>7} will then follow exactly as is presently below. Since we have that $\indexa+1\leq\windexa+2\leq2(\windexa+1)$, the conclusion of Theorem \ref{main theorem n>7} could then also be stated in terms of weak index.
\end{remark}

\vspace{0.5em}

The following lemma follows directly from Lemma \ref{lemma:locally strongly stable}. Consequently, all of the results for stable prescribed mean curvature hypersurfaces in \cite{bellettini-wickramasekera2019:arxiv} hold at sufficiently small scales.

\begin{lemma}\label{lemma:admissible bdd index implies admissible locally stable}
If $V\in\sclass_{g,I}(U)$ then for every $x\in \spt\mass{V}$ there exists $r = r(x)>0$ such that $V\restrictv\nball_r(x)\in\sclass_{g,0}(\nball_r(x))$.
\end{lemma}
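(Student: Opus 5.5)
The plan is to check the defining conditions of $\sclass_{g,0}(\nball_r(x))$ one by one for $V\restrictv\nball_r(x)$, with $x\in\spt\mass{V}$ fixed. Conditions (s1)--(s5) and (s6)(i)--(ii) are local, so they pass to the restriction with the same $F$, $\mu,\mu_1$ and mass bound $\Lambda$. Indeed, $\mass{\delta V}$ restricted to an open subset is still Radon with vanishing singular part. The mean curvature integrability is unchanged, and the sets $\sing_C$, $\sing_T$ and coincidence sets of $V\restrictv\nball_r(x)$ are just the intersections of those of $V$ with $\nball_r(x)$. For (s6)(i),(ii), a simply connected $\Omega\subset\subset\nball_r(x)$ is also simply connected and compactly contained in $U$, so the same immersion $\iota$ restricted to $\iota^{-1}(\nball_r(x))$ works. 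Thus the only real content is (s6)(iii) with $I$ replaced by $0$: we must find $r>0$ such that $\indexa(S_\O;\O)=0$ for every $\O=\Omega\setminus Z$ with $\Omega\subset\subset\nball_r(x)$ simply connected and $Z$ as in (s6).

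Fix $\rho<\inj(x)$ with $\nball_\rho(x)\subset\subset U$; shrinking $\rho$ we may take $\nball_\rho(x)$ geodesically convex, hence simply connected. Apply (s6) with $\Omega=\nball_\rho(x)$ and $Z=\emptyset$. This shows that $\iota(S)\cap\nball_\rho(x)$ is a $\jfg$-stationary immersion with $\indexa\leq I$ there. Lemma \ref{lemma:locally strongly stable} then gives $\bar\rho\in(0,\rho]$ such that $S$ is strongly stable in $\nball_\sigma(x)$ for all $\sigma\leq\bar\rho$. Set $r=\bar\rho$.

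It remains to pass stability from $\nball_r(x)$ to every admissible $\O=\Omega\setminus Z\subset\nball_r(x)$. This is monotonicity of the index under inclusion, the fact already used inside the proof of Lemma \ref{lemma:locally strongly stable}. Any $\phi\in\csvf(\O)$ extends by zero to an element of $\csvf(\nball_r(x))$, and $S_\O\subset S_{\nball_r(x)}$. An admissible variation of $S_\O$ induced by $\phi$ and supported in $S_\O$ is also an admissible variation for the pair $(\nball_r(x),\psi)$, with the same second derivative of $\jfg$. For the orientations we use that the orientation of $\O$ is inherited from that of $\nball_r(x)$ (Remark \ref{remark:app-A}), and that of $S_\O$ from $S_{\nball_r(x)}$; this is where simple connectivity of $\Omega$ enters. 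Hence a negative direction in $\O$ would give a negative direction in $\nball_r(x)$, contradicting strong stability, so $\indexa(S_\O;\O)=0$.

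The main obstacle is this orientation and compatibility bookkeeping, together with confirming that Lemma \ref{lemma:locally strongly stable} really applies at points $x\in\sing V$. At such $x$ we must check that $Z=\emptyset$ is allowed in (s6) and that $\jfg$-stationarity holds on all of $\nball_\rho(x)$ in the immersed sense, and not only away from $\Sigma$. If there is an issue when $x\in\Sigma$, the fallback is to apply (s6) with $\Omega=\nball_\rho(x)$ and $Z=\Sigma\cap\nball_\rho(x)$, which satisfies $\haus^{n-7+\beta}(Z)=0$ for all $\beta\in(0,1)$. We then run the argument of Lemma \ref{lemma:locally strongly stable} on the balls $\nball_\sigma(x)\setminus Z$, using Lemma \ref{lemma:unstable disjoint sets} on annuli minus $Z$ exactly as in that proof.
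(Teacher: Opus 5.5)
Your main argument is the paper's argument. The paper only says that the lemma follows directly from Lemma \ref{lemma:locally strongly stable}, and your steps spell that out: apply (s6) on a small geodesic ball with $Z=\emptyset$, invoke that lemma, then pass to smaller sets by monotonicity, which is already built into the definition of $\indexa(S;U)\leq I$ as a bound over all admissible $\O\subset U$. Do not lean on your fallback, though: $\haus^{n-7+\beta}(\Sigma)=0$ is not a hypothesis of $\sclass_{g,I}$ but a conclusion of the Bellettini--Wickramasekera regularity theory, which is only available once local stability (this lemma) is known, so using it here would be circular.
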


Next we introduce the \emph{stability radius}:

\begin{definition}\label{defn:stab-radius}
Let $V\in \sclass_{g,I}(U)$ and denote by $\iota:S\rightarrow U$ the immersion corresponding to $\genreg\,V$. We define the stability radius $s_V:U\to [0,\infty]$ as:
\begin{equation*}
s_V(x):= \sup\{r\geq 0:\indexa(S_{\O};\O)=0,\text{ for all }\O\subset\nball_r(x)\text{ as in Definition }\ref{admissible varifolds}(s6)\}.
\end{equation*}
\end{definition}

It follows from Lemma \ref{lemma:admissible bdd index implies admissible locally stable} that $s_V(x)>0$ for all $x\in\spt\mass{V}$.
The stability radius is then in fact Lipschitz; the proof of this is exactly the same as in \cite{aiex-mccurdy-minter2024}*{Lemma 2.5}

\begin{lemma}\label{lemma:stability radius continuity}
Let $V\in \sclass_{g,I}(U)$.
Then, either $s_V\equiv \infty$ or for all $x,y\in U$ or we have
$$|s_V(x)-s_V(y)|\leq d^N(x,y).$$
\end{lemma}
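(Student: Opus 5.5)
The plan is to derive the Lipschitz bound from one monotonicity property of the stability condition under inclusion of balls, and then apply it twice with the roles of $x$ and $y$ exchanged. The property is this: if $\indexa(S_\O;\O)=0$ for every admissible $\O\subset \nball_r(x)$ as in Definition \ref{admissible varifolds}(s6), then the same holds for every admissible $\O'\subset\nball_{r'}(x)$ with $r'\le r$, simply because $\O'\subset \nball_r(x)$. Consequently the set of $r$ appearing in the supremum defining $s_V(x)$ is an interval $[0,s_V(x))$ or $[0,s_V(x)]$. The key geometric input is the triangle inequality $\nball_{r-d}(y)\subset \nball_r(x)$ whenever $d=d^N(x,y)<r$.

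First I dispose of the dichotomy. Suppose $s_V(x)=\infty$ for some $x$. Then for every $r$ the condition holds on $\nball_r(x)$. Any $y$ satisfies $\nball_{r}(y)\subset\nball_{r+d^N(x,y)}(x)$, so the condition also holds on $\nball_r(y)$ for every $r$, and hence $s_V(y)=\infty$. Thus either $s_V\equiv\infty$ or $s_V$ is finite everywhere.

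In the finite case, fix $x,y$ with $d=d^N(x,y)$. If $s_V(x)\le d$, then $s_V(x)-s_V(y)\le d$ holds trivially, since $s_V\ge 0$. Otherwise, take any admissible $r$ for $x$ with $d<r<s_V(x)$. Every admissible $\O\subset \nball_{r-d}(y)$ is contained in $\nball_r(x)$ and is therefore strongly stable. Hence $s_V(y)\ge r-d$, and letting $r\uparrow s_V(x)$ gives $s_V(y)\ge s_V(x)-d$. Swapping $x$ and $y$ gives the reverse inequality, which completes the proof.

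The only subtlety is checking that "admissible $\O$ as in (s6)" is defined purely by conditions intrinsic to $\O$: simply connected $\Omega\subset\subset U$ minus a small closed set $Z$. Once this is confirmed, inclusion $\O\subset\nball_{r-d}(y)\subset\nball_r(x)$ transfers stability with no change in the class of test sets. I expect no real obstacle beyond this bookkeeping and the edge cases $d\ge s_V(x)$ and the supremum not being attained, which the limiting argument in $r$ handles.
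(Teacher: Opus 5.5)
Your proposal is correct and is essentially the argument the paper relies on (it defers to \cite{aiex-mccurdy-minter2024}*{Lemma 2.5}). That argument uses monotonicity of the stability condition in the radius together with the inclusion $\nball_{r-d}(y)\subset\nball_r(x)$, applied symmetrically in $x$ and $y$. The same inclusion gives the dichotomy $s_V\equiv\infty$ or $s_V<\infty$ everywhere, exactly as you have it.
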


Next we define the \emph{regularity scale} for a general integral varifold $V$ (cf.~\cites{aiex-mccurdy-minter2024,naber-valtorta2020}).
\begin{definition}\label{regularity scale}
    Let $U\subset N$ be an open set, $V\in \ivarifolds_n(U)$, $x\in \spt\|V\|$, and $Q\in \{1,2,\dotsc\}$ be a positive integer. 
    First define
		\begin{equation*}
		\begin{aligned}
    \regscale^Q_{0,V}(x):= \sup\{\rho\in (0,\inj(x)): \nball_\rho(x)\subset U\text{ and }& V\restrictv\nball_\rho(x)\textit{ is a union of at most }\\
		                 & \hspace{6em} Q\text{ graphs of $C^2$ functions}\}
		\end{aligned}
		\end{equation*}
    where we set $\sup\emptyset := 0$. 
   
    When $V\restrictv\nball_\rho(x)$ is represented by the sum of $q\leq Q$ graphs of $C^2$ functions $u_1,\dotsc,u_q$, the regularity scale of $V$ at $x\in \spt\|V\|$ is defined to be
    $$\regscale^Q_{V}(x):= \sup\left\{0<\rho\leq \regscale^Q_{0,V}(x): \sup_{\spt\|V\|\cap \nball_\rho(x)}\rho\sum^q_{i=1}|A_{\graph(u_i)}| \leq 1\right\}$$
    where, once again, if this set is empty (i.e. $\regscale^Q_{0,V}(x) = 0$) we set $\regscale^Q_{V}(x):= 0$.
    
    We then write $\badreg_r^Q(V):= \{x\in\spt\|V\|:\regscale^Q_V(x)\leq r\}$ and $\badreg_r(V):=\cup_{Q\geq 1}\badreg_r^Q(V)$.
\end{definition}

\begin{lemma}\label{regularity scale at genreg}
Let $V\in \sclass_{g,I}(U)$.
Then, $x\in\genreg\,V$ if and only if $\regscale^Q_{V}(x)>0$ for some $Q\geq 2$.
\end{lemma}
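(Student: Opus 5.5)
The statement is an equivalence, so I will prove the two directions separately, working in a fixed normal coordinate ball $\nball_r(x)$ with $r<\inj(x)$ so that graphs over $T_x$ are defined via $\exp_x$ as in Remark \ref{graph remark}. The argument is essentially a matter of unwinding Definitions \ref{reg set}, \ref{touching singularities}, \ref{gen-reg set} and \ref{regularity scale}. The one substantive input is that a touching singularity cannot be a classical singularity, and that the two sheets of a touching point are ordered and $C^2$; this comes from (s2) and the definition of generalized regular points.

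\emph{($\Rightarrow$)} Suppose $x\in\genreg\,V$. There are two cases.

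If $x\in\reg_2\,V$, then for small $\rho$ the set $\spt\mass{V}\cap\nball_\rho(x)$ is a connected embedded $C^2$ hypersurface through $x$. In normal coordinates it is a single $C^2$ graph $u$ over $T_x\spt\mass{V}$ with $u(0)=0$ and $Du(0)=0$. Since $V$ is integral, I must also check that the multiplicity is a constant $\theta$. This holds because $V$ has locally bounded first variation with no singular part (s1) and the support is a connected $C^2$ hypersurface, so the constancy theorem applies. Counting the graph with multiplicity, I take $Q\geq\max\{2,\theta\}$, so that $V\restrictv\nball_\rho(x)$ is a union of at most $Q$ graphs and $\regscale^Q_{0,V}(x)>0$.

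If instead $x\in\sing_T\,V$ with $C^2$ functions $u_1\leq u_2$, then $V\restrictv\nball_\rho(x)$ is carried by $\graph(u_1)\cup\graph(u_2)$. By (s6)(i), the multiplicity is the number of preimages of the immersion. Away from the coincidence set this is locally constant on each sheet, so each sheet carries a constant multiplicity $\theta_i$ and $V=\theta_1|\graph u_1|+\theta_2|\graph u_2|$. Hence $V\restrictv\nball_\rho(x)$ is a union of $\theta_1+\theta_2\leq Q$ graphs for suitable $Q\geq 2$.

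In either case the graphs are $C^2$, so on a slightly smaller ball the quantity $\sum_i|A_{\graph(u_i)}|$ is bounded by some $M<\infty$. Any $\rho\leq\min\{\regscale^Q_{0,V}(x)/2,\,1/M\}$ is then admissible in the definition, which gives $\regscale^Q_V(x)>0$.

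\emph{($\Leftarrow$)} Suppose $\regscale^Q_V(x)>0$. Then $V\restrictv\nball_\rho(x)$ is a union of $q\leq Q$ graphs of $C^2$ functions $u_1,\dots,u_q$ over some plane. Since $x\in\spt\mass{V}$, at least one of these graphs passes through $x$.

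If all the graphs through $x$ coincide near $x$, then $\spt\mass{V}$ is an embedded $C^2$ hypersurface near $x$ (shrink $\rho$ to discard graphs not through $x$), so $x\in\reg_2\,V\subset\genreg\,V$.

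Otherwise, the distinct graphs through $x$ must all be tangent at $x$. If two of them met transversally, they would produce a classical singularity in the sense of Definition \ref{classical singularities}, which (s2) forbids. Moreover, the graphs cannot cross anywhere near $x$: a crossing would again give a transverse intersection, or a classical singularity limiting to $x$, both excluded by (s2). So the distinct graphs are ordered, say $w_1\leq\dots\leq w_m$.

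Here I expect the main obstacle: showing that $m=2$, i.e.\ that $\spt\mass{V}$ near $x$ is exactly $M_1\cup M_2$ with $M_1=\graph(w_1)$ and $M_2=\graph(w_m)$. The difficulty is that a middle sheet $w_j$ could touch both outer sheets. My plan is to apply (s3)--(s5) together with the strong maximum principle for the PMC equation (as in \cite{bellettini-wickramasekera2019:arxiv}). Each $w_j$ solves $H=\pm g$ with respect to its orientation. A middle sheet lies between two others and shares their orientation up to sign, so the maximum principle forces it to agree with an outer sheet on the component of coincidence. Failing that, I would simply cite \cite{bellettini-wickramasekera2019:arxiv}*{Theorem 3 / structure of touching sets}, which states that at most two sheets meet at a point of $\genreg$.

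Granting $m=2$, and since $x\notin\reg_1\,V$ (otherwise we are in the previous case), we get $x\in\sing_T\,V$ with $C^2$ functions $u_1\leq u_2$. This is precisely the condition in Definition \ref{gen-reg set}, so $x\in\genreg\,V$.
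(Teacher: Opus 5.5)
\paragraph{Verdict.} Your forward direction is essentially the paper's, which simply takes $Q=\max\{2,\Theta_V(x)\}$. The genuine gap is in ($\Leftarrow$): you try to get tangency at $x$, ordering, and $m=2$ from (s2) alone, and that is too weak.

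\paragraph{A small fix in ($\Rightarrow$).} A sheet minus the coincidence set may be disconnected, so ``each sheet carries a constant multiplicity'' does not follow. Instead, properness of $\iota$ shows that each component of $\iota^{-1}(\nball_\rho(x))$ is a $C^2$ graph through $x$, and that is all you need.

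\paragraph{Why (s2) does not suffice.} Condition (s2) only excludes configurations literally of the form in Definition \ref{classical singularities}.
\begin{enumerate}
\item A crossing can be tangential. For example, $\{x_{n+1}=0\}\cup\{x_{n+1}=x_1^3\}$ has no transverse pair, so it is not a classical singularity; it is a touching singularity in the sense of Definition \ref{touching singularities}. Hence ``the graphs cannot cross'' is false as stated. This particular issue can be patched by reordering via $\min/\max$, which stays $C^2$ when no transverse crossing occurs.
\item More seriously, once three sheets pass through $x$, a transverse pair need not produce any point of $\sing_C\,V$. Take the sheets $x_{n+1}=0$, $x_{n+1}=x_1$ and $x_{n+1}=x_1^6\sin^2(1/x_1)$. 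The first and third touch along the hyperplanes $\{x_1=1/(k\pi)\}$, which accumulate on the crossing locus $\{x_1=0\}$. So near no point is the support a union of hypersurfaces-with-boundary meeting only along a common boundary. Thus (s2) holds near $0$, yet the sheets through $0$ are not all tangent there.
\item Your $m=2$ step is also unsupported. The maximum-principle sketch needs each whole sheet to be PMC for a single orientation, and (s5) only provides this at two-sheet touching points. Your fallback citation is phrased for points already known to lie in $\genreg\,V$, which is exactly what you are trying to prove.
\end{enumerate}

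\paragraph{The missing ingredient.} You never use stability. The paper applies Lemma \ref{lemma:admissible bdd index implies admissible locally stable} to get $V\restrictv\nball_r(x)\in\sclass_{g,0}(\nball_r(x))$, which makes the Bellettini--Wickramasekera theory available.
\begin{enumerate}
\item The graphical representation gives a unique tangent cone at $x$, supported on at most $Q$ hyperplanes.
\item The Minimum Distance Theorem \cite{bellettini-wickramasekera2019:arxiv}*{Theorem 6.3} forces this cone to be a single hyperplane. This is where the absence of classical singularities actually enters, together with stability. It rules out configurations like the three-sheet one above, whose tangent cone is a classical cone.
\item Then \cite{bellettini-wickramasekera2019:arxiv}*{Theorem 6.4(ii)} shows that the sheets meet only in pairs. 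Hence $x\in\reg_2\,V$ or $x\in\sing_T\,V\cap\genreg\,V$.
\end{enumerate}
Replacing your appeals to (s2) and the maximum principle with these two results closes the argument.
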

\begin{proof}
It follows directly from the definition that if $x\in\genreg\,V$ then $\regscale^{Q}_V(x)>0$ for $Q = \max\{2,\Theta_V(x)\}$.

Now suppose $x\in\spt\mass{V}$ is such that $\regscale^Q_V(x)>0$. Then $V$ can be locally written as the graph of at most $Q$ functions of class $C^2$.
In particular $V$ has a unique tangent cone $\mathbf{C}$ at $x$ whose support is the union of at most $Q$ hyperplanes.
Since $V$ is locally strongly stable, it follows from the Minimum Distance Theorem \cite{bellettini-wickramasekera2019:arxiv}*{Theorem 6.3} that $\spt\,\mathbf{C}$ must be a single hyperplane, that is, either $x\in \reg_2\, V$ or some subset of the $Q$ functions intersect tangentially at $x$. It follows from \cite{bellettini-wickramasekera2019:arxiv}*{Theorem 6.4(ii)} that the $Q$ functions only intersect in pairs, hence $x\in \sing_T\,V\cap \genreg\, V$.
\end{proof}

\begin{remark}
In the above we only used that $V$ has no classical singularities to show that a local graphical representation can only intersect tangentially.
\end{remark}

\section{Quantitative Stratification}\label{sec:quantitative}

For the remainder of the paper we fix a manifold $N$ of dimension $n+1$ and a point $0\in N$ such that $\nball_2=\nball_2(0)$ is orientable and for some $K\in (0,1/2)$, we have $\left|\left.\textnormal{sec}\right|_{B^N_2(0)}\right| \leq K$ and $\left.\inj\right|_{B^N_2(0)}\geq K^{-1}$.

In this section we briefly recall the notion of \textit{strata} as well as the \textit{quantitative strata} in codimension one.
For more details see \cite{aiex-mccurdy-minter2024}*{Section 3}. Here, we denote by $\kcone_k\subset\ivarifolds_n(\real^{n+1})$ the set of $k$-symmetric $n$-dimensional cones in $\real^{n+1}$.

\begin{definition}\label{defn:conical}
    Let $\delta>0$, $r\in (0, K^{-1})$, and $k\in \{0,\dotsc,n\}$. We say that $V\in\ivarifolds_n(\nball_2)$ is $(\delta,r,k)$-conical at a point $x\in \spt\|V\|$ if $\nball_r(x)\subset \nball_2$ and there exists a $k$-symmetric cone $\BC\in\kcone_k$ such that
    $$\mathbf{d}\left((\eta_{0,r})_\#\tilde{V}\restrictv \eball_1,\BC\restrictv\eball_1\right)\leq \delta;$$
where $\tilde{V}=(\exp_x^{-1})_\# (V\restrictv\nball_r(x))\in\ivarifolds_n(T_xN)$ and $\mathbf{d}$ is the metric corresponding to the Fr\' echet structure of the varifold topology which induces the same topology.
\end{definition}

\begin{definition}\label{defn:strata}
    Let $\delta>0$, $R\in (0,K^{-1}]$, $r\in (0,R)$, and $V\in \ivarifolds_n(\nball_2)$ with bounded first variation. Then for each $k\in\{0,\dotsc,n\}$, we define the $k^{\text{th}}$ $(\delta,r,R)$-stratification by:
\begin{equation*}
\strata^k_{\delta,r,R}(V):= \{x\in\spt\|V\|: V\text{ is not }(\delta,s,k+1)\text{-conical at }x\text{ for all }s \in [r,R)\}.
\end{equation*}
\end{definition}

The main result of this section is the following recasting of the regularity results in \cite{bellettini-wickramasekera2019:arxiv}.

\begin{theorem}[$\varepsilon$-Regularity Theorem]\label{epsilon regularity}
Let $n\geq 2$, $\Lambda\in (0,\infty)$, $\Gamma\in (0,\infty)$, $\mu,\mu_1>0$, and $d,K\in (0,1/2)$. Suppose $(N,h)$ is a Riemannian manifold of dimension $n+1$ with $0\in N$, $\left|\left.\textnormal{sec}\right|_{B^N_2}\right| \leq K$, $\left.\inj\right|_{B^N_2}\geq K^{-1}$, $g:\nball_2\rightarrow\real$ is a function of class $C^{1,\alpha}$, and $A\subset B^N_2(0)$ is a non-empty compact subset which obeys $d(A,\del B^N_2(0))\geq d$. 
Then there exist constants $\varepsilon_0 = \varepsilon_0(n,\mu,\mu_1,\Lambda,K,d,\Gamma,\alpha)\in (0,1)$ and $Q_0 = Q_0(n,\mu,\mu_1,\Lambda,K,d,\Gamma,\alpha)\in \mathbb{Z}_{\geq 1}$ such that the following holds: if $|g|_{1,\alpha}\leq\Gamma$, $V\in \ivarifolds_n(\nball_2)$, $x\in \spt\|V\|\cap A$, and $\rho\in (0,d]$ satisfy:
    \begin{enumerate}
        \item [\textnormal{(1)}] $V\restrictv B^N_{\rho/2}(x)\in\sclass_{g,0}(B^N_{\rho/2}(x))$;
        \item [\textnormal{(2)}] $V$ is $(\varepsilon_0,\rho/2,n-6)$-conical at $x$;
    \end{enumerate}
    then we have $\regscale^{Q_0}_V(x)\geq \varepsilon_0\rho$.
\end{theorem}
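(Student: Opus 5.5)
The proof is by contradiction and compactness, which is the standard route for $\varepsilon$-regularity statements of this kind. Suppose no such $\varepsilon_0,Q_0$ exist. Then there are sequences $(N_j,h_j)$, $g_j$, $A_j$, $V_j$, $x_j\in\spt\|V_j\|\cap A_j$ and $\rho_j\in(0,d]$ with the following properties:
\begin{itemize}
\item[(a)] $V_j\restrictv\nball_{\rho_j/2}(x_j)\in\sclass_{g_j,0}$;
\item[(b)] $V_j$ is $(1/j,\rho_j/2,n-6)$-conical at $x_j$;
\item[(c)] $\regscale^{j}_{V_j}(x_j)<\rho_j/j$.
\end{itemize}
We rescale by $\rho_j/2$ in normal coordinates, pulling back via $\exp_{x_j}$ and applying $\eta_{0,\rho_j/2}$. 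This produces varifolds $W_j$ in $\eball_1(0)\subset\real^{n+1}$, equipped with metrics $\tilde h_j$ that converge in $C^{2}$ to the Euclidean metric after passing to a subsequence; this uses $\rho_j\le d<1/2$ and the bounds on sectional curvature and injectivity radius. The rescaled prescribing functions $\tilde g_j=(\rho_j/2)\,g_j\circ\exp_{x_j}(\rho_j\cdot/2)$ satisfy $|\tilde g_j|_{1,\alpha}\le\Gamma$, so a subsequence converges in $C^1$ to some $\tilde g$. The rescaled integrands remain in a class $\tilde{\mathcal I}(\mu,\mu_1',\cdot)$ with $\mu_1'\le\mu_1$, because rescaling only improves the $z$-derivative bounds. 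Finally, the mass ratio of $W_j$ on $\eball_1$ is bounded by monotonicity: condition (s1) gives $H\in L^q$, and the mass bound $\Lambda$ together with $d(A,\partial\nball_2)\ge d$ controls the density ratios, with constants depending only on the listed parameters.

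Next we pass to a limit. Each $W_j$ is strongly stable, since strong stability is scale invariant, and satisfies (s1)--(s6) for the rescaled data. We can therefore apply the compactness theorem for strongly stable PMC varifolds from \cite{bellettini-wickramasekera2019:arxiv}. After a subsequence, $W_j\to W$ as varifolds, and $W$ is a stationary integral varifold with respect to the limiting elliptic integrand and prescribed mean curvature $\tilde g$, again in the class $\sclass_{\tilde g,0}$. Because of (b), the varifold distance between $W_j\restrictv\eball_1$ and some $(n-6)$-symmetric cone $\BC_j\restrictv\eball_1$ is at most $1/j$. The cones $\BC_j$ have uniformly bounded mass, so they subconverge to an $(n-6)$-symmetric cone $\BC$, and $W\restrictv\eball_1=\BC\restrictv\eball_1$. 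Since the rescaled curvature terms $\tilde g_j$ and the $z$-dependence of the integrand both tend to zero under blow-up, $\BC$ is a strongly stable, stationary cone for a constant-coefficient elliptic integrand, and by the minimum distance theorem \cite{bellettini-wickramasekera2019:arxiv}*{Theorem 6.3} it has no classical singularities. Now the regularity and dimension-reduction part of \cite{bellettini-wickramasekera2019:arxiv}, namely the bound $\dim_\H\Sigma\le n-7$, combined with Federer-type dimension reduction, shows that such a cone with $n-6$ dimensions of translation invariance must be a hyperplane with some integer multiplicity $m$. Its multiplicity satisfies $m\le C(n,\Lambda,d,\dots)$.

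It remains to get a contradiction with (c) from this. By the sheeting theorem for strongly stable PMC varifolds in \cite{bellettini-wickramasekera2019:arxiv}, which is their analogue of Schoen--Simon--Wickramasekera sheeting, for $j$ large $W_j\restrictv\eball_{1/2}$ is the union of at most $m$ ordered $C^{1,\alpha}$ graphs over the limiting plane. These graphs are in fact $C^{2}$, with curvature estimates coming from Schauder theory for the PMC equation with $C^{1,\alpha}$ right-hand side, and they touch only tangentially. The estimates are uniform, so $\sup|A|\le C$ on $\eball_{1/4}$. Taking $Q_0\ge m$, this yields $\regscale^{Q_0}_{W_j}(0)\ge c>0$ uniformly in $j$, and scaling back gives $\regscale^{Q_0}_{V_j}(x_j)\ge c\rho_j/2$, which contradicts (c) once $j$ is large.

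I expect the main obstacle to be justifying that the limit cone is a hyperplane and that sheeting applies uniformly despite possible touching singularities, where sheets with opposite orientations coincide. I would handle this by invoking the compactness and sheeting theorems of \cite{bellettini-wickramasekera2019:arxiv} (their Theorems 6.3 and 6.4) directly in rescaled form, after checking carefully that the rescaled integrands and prescribing functions stay in the same hypothesis class.
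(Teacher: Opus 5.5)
Your proposal is correct and follows essentially the same route as the paper's proof. Both arguments take a contradiction sequence with $\varepsilon_0=1/j$ and $Q_0=j$, rescale by $\rho_j/2$ in normal coordinates, and apply the Bellettini--Wickramasekera compactness theorem. Both then observe that the $(n-6)$-symmetric limit cone must be a hyperplane, since otherwise its spine would lie in the genuine singular set and contradict $\dim_{\haus}\Sigma\le n-7$. Finally, sheeting plus higher regularity contradicts $\regscale^{j}_{V_j}(x_j)<\rho_j/j$.

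One inaccuracy needs fixing. The radii $\rho_j$ need not tend to $0$; the paper only keeps a limit $\hat\rho\in[0,d]$. So the rescaled metrics need not become Euclidean, and $\tilde g_j$ and the $z$-dependence of the integrand need not vanish in the limit. This is harmless, because your hyperplane conclusion only uses that the limit lies in $\sclass_{\tilde g,0}$ together with the dimension bound on its singular set.
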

\begin{proof}
Suppose the theorem were false. Then for each $i\in \{1,2,\dotsc\}$ we can find $C^{1,\alpha}$ functions $g_i:\nball_2\rightarrow\real$ with $|g_i|_{1,\alpha}\leq\Gamma$ as well as $V_i\in \ivarifolds_n(\nball_2)$, $x_i\in A$ and $\rho_i\in(0,d]$ such that $V_i$ is stable in $\nball_{\rho_i/2}(x_i)$ and $(1/i,\rho_i/2,n-6)$-conical at $x_i$ but $\regscale^{i}_{V_i}(x_i) < \rho_i/i$.
Denote $\tilde{V}_i=\left(\exp_{x_i}^{-1}\right)_{\#}\left(V_i\restrictv\nball_{\rho_i/2}(x_i)\right)$ and $\tilde{g}_i=\sqrt{|h|}g_i\circ\exp_{x_i}$ so that $\tilde{V}_i\in\sclass_{\tilde{g}_i,0}(\eball_{\rho_i/2})$ is $(1/i,\rho_i/2,n-6)$-conical at $0$ and $\regscale^{i}_{\tilde{V}_i}(0)\leq \rho_i/i$, where $\eball_{\rho_i/2}\subset T_{x_i}N$ with respect to the metric $h(x_i)$ (namely, the metric on $T_{x_i}N$ induced by pulling back $h$ via $\exp_{x_i}$).

There exists a subsequence (which we pass to) such that $\{x_i\}$ converges to some $\tilde{x}\in A$.
We assume that $N$ is isometrically embedded in $\real^L$ for sufficient large $L$, in which case we may identify $T_{x_i}N$ with $T_{\tilde{x}}N$ by a (small) rigid motion of $\real^L$.
Under these identifications we may further identify $\tilde{V}_i$ and $\tilde{g}_i$ accordingly in $T_{\tilde{x}}N$.
We write $\hat{W}_i=\left(\eta_{0,\rho_i/2}\right)_{\#}\tilde{V}_i$ and $\hat{g}_i=\left(\rho_i/2\right)\tilde{g}_i$.
Hence, $\hat{W}_i\in\sclass_{\hat{g}_i,0}(\eball_1)$, with $\eball_1\subset T_{\tilde{x}}N$ and, due to the above identifications, there exists a constant $\delta=\delta(n,K)>0$ such that $\hat{W}_i$ is $(\delta/i,1,n-6)$-conical at $0$ and $\regscale^i_{\hat{W}_i}(0)<\delta/i$.

Given $\beta\in(0,\alpha)$ we may find a subsequence of $\{g_i\}$ that converges to a $C^{1,\alpha}$ function $g:\eball_1\rightarrow\real$ in the $C^{1,\beta}$ topology.
We define $\tilde{g}=\sqrt{|h|}g\circ\exp_{\tilde{x}}$ and $\hat{g}=(\hat{\rho}/2)\tilde{g}$, where $\hat{\rho}\in[0,d]$ is the limit of $\{\rho_i\}$ up to a subsequence.
Therefore, $\hat{g_i}$ must converge to $\hat{g}$ in the $C^{1,\beta}$ topology.

It follows from the Compactness Theorem \cite{bellettini-wickramasekera2019:arxiv}*{Theorem 1.4} that there exists a subsequence of $\{\hat{W}_i\}$ that converges to $\hat{W}\in\sclass_{\hat{g},0}(B_1)$ such that $\Sigma=\spt\mass{\hat{W}}\setminus\genreg\,\hat{W}$ satisfies $\haus^{n-7+\gamma}(\Sigma)=0$ for all $\gamma\in(0,1)$.
Furthermore, since $\hat{W}_i$ is $(\delta/i,1,n-6)$-conical at $0$ we know that $\hat{W}$ is a cone with spine $S(\hat{W})$ satisfying $\dim_\haus(S(\hat{W}))\geq n-6$.
Supposing that $\hat{W}$ is not a hyperplane, then the spine does not have embedded points or touching singularities so it must be contained in $\Sigma$, which contradicts the Hausdorff dimension estimates.
Thus, the support of $\hat{W}$ must be a hyperplane.

Since the generalized mean curvature of $\hat{W}_i$ has bounded $L^q$-norm for some $q>n$, it follows that for $i$ sufficiently large $\hat{W}_i$ satisfy the flatness condition of the Sheeting Theorem \cite{bellettini-wickramasekera2019:arxiv}*{Theorem 6.2} on a sufficiently small ball.
Therefore, for $i$ sufficiently large $\hat{W}_i$ is locally given by the graph of $Q$ functions of class $C^{1,\alpha}$. Finally, from the Higher Regularity Theorem \cite{bellettini-wickramasekera2019:arxiv}*{Theorem 6.4} we get that $\hat{W}_i$ is locally given by the graph of $Q$ functions of class $C^2$ (which can be improved to class $C^{2,\alpha}$ by elliptic regularity since $\hat{g_i}$ are of class $C^{1,\alpha}$ and the parametric elliptic functionals are of class $C^{2,\alpha}$).
This contradicts $\regscale^i_{\hat{W}_i}(0)<\delta/i$ for sufficiently large $i$ and concludes the proof.
\end{proof}

As an immediate corollary we have:

\begin{corollary}\label{corollary epsilon regularity}
Let $n\geq 7$, $\Lambda\in (0,\infty)$, $\Gamma\in (0,\infty)$, $\mu,\mu_1>0$, and $d,K\in (0,1/2)$. Suppose $(N,h)$ is a Riemannian manifold of dimension $n+1$ with $0\in N$, $\left|\left.\textnormal{sec}\right|_{B^N_2}\right| \leq K$, $\left.\inj\right|_{B^N_2}\geq K^{-1}$, $g:\nball_2\rightarrow\real$ is $C^{1,\alpha}$, and $A\subset B^N_2(0)$ is a non-empty compact subset which obeys $d(A,\del B^N_2(0))\geq d$.
Then, there exist constants $\varepsilon_0 = \varepsilon_0(n,\mu,\mu_1,\Lambda,K,d,\Gamma,\alpha)\in (0,1)$ and $Q_0 = Q_0(n,\mu,\mu_1,\Lambda,K,d,\Gamma,\alpha)\in \mathbb{Z}_{\geq 1}$ such that the following holds: if $|g|_{1,\alpha}\leq\Gamma$ and $V\in \ivarifolds_n(\nball_2)$ satisfies $V\restrictv\nball_d(x)\in\sclass_{g,0}(\nball_d(x))$ for all $x\in A$, then
\begin{equation*}
\badreg_{\varepsilon_0 \sigma/2}^{Q_0}(V)\cap A\subset \strata^{n-7}_{\varepsilon_0,\sigma/2,d}(V)\cap A
\end{equation*}
for all $\sigma\in(0,d]$.
\end{corollary}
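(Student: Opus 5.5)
We prove the contrapositive: take the constants $\varepsilon_0,Q_0$ from Theorem \ref{epsilon regularity}, with the same $n,\mu,\mu_1,\Lambda,K,d,\Gamma,\alpha$, and show that any $x\in A$ with $x\notin\strata^{n-7}_{\varepsilon_0,\sigma/2,d}(V)$ satisfies $\regscale^{Q_0}_V(x)>\varepsilon_0\sigma/2$. By Definition \ref{defn:strata}, $x\notin\strata^{n-7}_{\varepsilon_0,\sigma/2,d}(V)$ means there is a scale $s\in[\sigma/2,d)$ at which $V$ is $(\varepsilon_0,s,n-6)$-conical at $x$.

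Set $\rho:=2s$, so that hypothesis (2) of Theorem \ref{epsilon regularity} holds exactly. The stability hypothesis (1) requires $V\restrictv\nball_{\rho/2}(x)=V\restrictv\nball_s(x)\in\sclass_{g,0}(\nball_s(x))$. This follows from the assumption $V\restrictv\nball_d(x)\in\sclass_{g,0}(\nball_d(x))$ by restriction, since $s<d$ and membership in $\sclass_{g,0}$ is inherited by open subsets. Indeed, every admissible $\O=\Omega\setminus Z$ as in (s6) with $\Omega\subset\subset\nball_s(x)$ is also admissible in $\nball_d(x)$, the conditions (s1)--(s5) are local, and the mass only decreases.

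The one point needing care is the range $\rho\in(0,d]$ in Theorem \ref{epsilon regularity}, whereas here $\rho=2s$ may lie in $(d,2d)$. I would handle this by applying Theorem \ref{epsilon regularity} with $d$ replaced by $d/2$ and adjusting the constants. Alternatively, one can note that the proof of Theorem \ref{epsilon regularity} only uses $\rho\le 2d$ together with $d(A,\partial\nball_2)\ge d$ to keep the relevant balls inside $\nball_2$; since $\nball_s(x)\subset\nball_2$ whenever $s<d$, the same compactness argument goes through with $\rho\in(0,2d]$. With either fix, Theorem \ref{epsilon regularity} gives
$$\regscale^{Q_0}_V(x)\ge\varepsilon_0\rho=2\varepsilon_0 s\ge\varepsilon_0\sigma>\varepsilon_0\sigma/2,$$
so $x\notin\badreg^{Q_0}_{\varepsilon_0\sigma/2}(V)$. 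This proves the claimed inclusion for every $\sigma\in(0,d]$. The hypothesis $n\ge7$ only ensures that the stratum index $n-7$ is nonnegative.
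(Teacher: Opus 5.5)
Your argument is the same contrapositive application of Theorem \ref{epsilon regularity} with $\rho=2s$ that the paper treats as immediate, and your second remedy for the factor-of-two range issue is the right one: the compactness proof of that theorem only uses that the $\rho_i$ are bounded and that $\nball_{\rho_i/2}(x_i)\subset\nball_2$ lies within the injectivity radius, so it runs verbatim for $\rho\in(0,2d]$. Your first remedy goes the wrong way, though: replacing $d$ by $d/2$ shrinks the admissible range to $\rho\le d/2$, so it would only work with an additional scale-change lemma for conicality (and a corresponding shrinking of $\varepsilon_0$) that you do not supply, and ``with either fix'' should read ``with the second fix''.
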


\section{Singular Set Estimates}\label{sec:main theorem}

In this section we will prove the case $n+1\geq 9$ of Theorem \ref{thm:A}.

The first theorem in this section is the rescaled version of the main measure bound by Naber--Valtorta.
The proof of this version is the same as \cite{aiex-mccurdy-minter2024}*{Corollary 4.2} but the constant will depend on the parametric elliptic functional and the upper bound of the generalized mean curvature, which in our case is given by the $C^0$-norm of the prescribed function.
More precisely, the dependency on the parametric elliptic functional is with respect to the constants $\mu,\mu_1$ as in Definition \ref{elliptic functional}.

\begin{theorem}[\cite{naber-valtorta2020}*{Theorem 1.3 and 1.4}]\label{rescaled quantitative estimates}
Let $n\in\mathbb{Z}_{\geq 2}$, $\Lambda\in (0,\infty)$, $\Gamma\in (0,\infty)$, $\mu,\mu_1>0$, $K\in (0,1/2)$, $I\in \{0,1,2,\dotsc\}$, and $\alpha\in(0,1]$.
Suppose $(N^{n+1},h)$ is a Riemannian manifold with $0\in N$, $\left|\left.\textnormal{sec}\right|_{B^N_2}\right| \leq K$, $\left.\inj\right|_{B^N_2}\geq K^{-1}$, and $g:\nball_2\rightarrow\real$ is a function of class $C^{1,\alpha}$.

Then, there exists a constant $C_\varepsilon = C_\varepsilon(n,\mu,\mu_1,\Lambda,K,\Gamma,\varepsilon)\in (0,\infty)$ such that the following is true: if $|g|_{0}\leq \Gamma$, $R\in (0,1/2]$ and $V\in \sclass_{g,I}(\nball_2)$, then
    $$\H^{n+1}\left(\nball_r(\strata^k_{\varepsilon,r,R}(V))\cap \nball_R(x)\right)\leq C_\varepsilon r^{n+1-k}R^k\ \ \ \ \text{for all }r\in (0,R]\text{ and }x\in\nball_1.$$
\end{theorem}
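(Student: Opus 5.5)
Since the statement is a rescaled form of Naber--Valtorta's volume estimate, the plan is to reduce it to the general quantitative stratification theorem of \cite{naber-valtorta2020}*{Theorems 1.3 and 1.4}, following \cite{aiex-mccurdy-minter2024}*{Corollary 4.2}. That theorem applies to integral varifolds with uniform mass bounds and generalized mean curvature in $L^\infty$ (or $L^q$, $q>n$), in a Riemannian ball with bounded geometry. For $V\in\sclass_{g,I}(\nball_2)$, the first variation is controlled by $F$ and $g$. Indeed, the generalized mean curvature with respect to the elliptic integrand satisfies $|H|\le C(\mu,\mu_1)|g|_0\le C\Gamma$ on $\reg_1 V$, hence $\|V\|$-a.e. by (s1), (s4) and (s6), since $\|\delta V\|_{\sing}=0$. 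The main ingredient is then a monotonicity-type quantity whose pinching detects conical structure.

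\textbf{Step 1 (monotonicity).} I will verify almost monotonicity. For the area integrand with $|H|\le C\Gamma$ and $|\sec|\le K$, $e^{C(\Gamma+K)r}\|V\|(\nball_r(x))/(\omega_n r^n)$ is non-decreasing for $r<K^{-1}$. For a general $F\in\mathcal I(N,\mu,\mu_1)$, I will use the freezing $\eta_y^\#\hat F_{v(y)}$ from Definition \ref{elliptic functional} to compare with a constant-coefficient integrand up to errors $\mu_1 r$. Via the Allard-type monotonicity for parametric integrands (as used in \cite{schoen-simon1981}), this gives $\Theta(x,r)$ with $\Theta(x,r)e^{Cr}$ monotone and densities bounded by $C(n,\mu,K,\Lambda)$ on $\nball_1$, using $\|V\|(\nball_2)\le\Lambda$.

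\textbf{Step 2 (quantitative rigidity/cone splitting).} Naber--Valtorta need two facts. First, if the density drop $\Theta(x,r)-\Theta(x,\gamma r)$ is small, then $V$ is $(\varepsilon,r,0)$-conical at $x$. Second, the $L^2$-best-plane ($\beta$-number) estimate bounds the Jones number by the density drop. Both are proven by compactness or by the first variation identity, and depend only on monotonicity together with compactness of integral varifolds with bounded mean curvature (Allard). The error terms coming from $g$, $K$ and $\mu_1$ scale like $r$, hence are lower order for $r\le R\le1/2$. At this point the constant acquires its dependence on $\Gamma,\mu,\mu_1$. Note that no stability or index is needed here, which is why $I$ does not enter $C_\varepsilon$.

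\textbf{Step 3 (scaling).} Naber--Valtorta's theorem gives $\H^{n+1}(\nball_r(\strata^k_{\varepsilon,r,1})\cap\nball_1)\le C_\varepsilon r^{n+1-k}$. To treat $R\in(0,1/2]$ and $x\in\nball_1$, I will pull back by $\exp_x$ and rescale by $\eta_{0,R}$. This improves the curvature bound to $KR^2\le K$ and the injectivity radius to $\ge K^{-1}/R$, while the rescaled mean curvature bound becomes $R\Gamma\le\Gamma$. The rescaled mass ratio is bounded by monotonicity, by $C\Lambda$. Strata transform as $\strata^k_{\varepsilon,r,R}\mapsto\strata^k_{\varepsilon,r/R,1}$, and volumes scale by $R^{n+1}$, which gives $C_\varepsilon (r/R)^{n+1-k}R^{n+1}=C_\varepsilon r^{n+1-k}R^k$. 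The expected main obstacle is Step 2 for general elliptic integrands, specifically the rigidity and $\beta$-number estimate. If these are not directly available, I would first attempt to reduce to area with bounded mean curvature, using the fact that after the freezing diffeomorphism the first variation of $V$ is a perturbation of an area-type first variation with errors controlled by $\mu_1$.
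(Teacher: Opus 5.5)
Your Step 3 is the paper's argument. The paper's proof is just the rescaling of \cite{aiex-mccurdy-minter2024}*{Corollary 4.2}, with Naber--Valtorta used as a black box. Your bookkeeping is right: $\exp_x$ and $\eta_{0,R}$, $\strata^k_{\varepsilon,r,R}\mapsto\strata^k_{\varepsilon,r/R,1}$, the factor $R^{n+1}$, $\nball_{2R}(x)\subset\nball_2$, the mass ratio from monotonicity, and no dependence on $I$.

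The gap is in Steps 1--2, where you try to rebuild Naber--Valtorta's ingredients for the anisotropic functional. This would fail.
\begin{itemize}
\item There is no monotonicity formula for a general parametric elliptic integrand. By Allard's characterization of the area integrand, monotonicity for all stationary varifolds essentially singles out area.
\item Naber--Valtorta's $L^2$ best-subspace estimate is not a compactness statement. It is an exact computation: you test the \emph{area} first variation against radial fields. This bounds the $\beta$-numbers by $\int|(y-x)^\perp|^2|y-x|^{-n-2}\,d\mass{V}$, i.e.\ by the density drop.
\item The normalization $\eta_y^\#\hat F_{v(y)}$ in Definition \ref{elliptic functional} does not help. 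It straightens coordinates and adds the null Lagrangian $v(y)\cdot p$, which freezes the $z$-dependence up to errors of order $\mu_1 r$. The anisotropy in $p$ remains untouched, is scale-invariant, and is controlled only by $\mu$; it never becomes lower order.
\end{itemize}
Your fallback (``a perturbation of an area-type first variation with errors controlled by $\mu_1$'') fails for the same reason.

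The missing observation is that Naber--Valtorta never sees $F$. Their theorem concerns $V$ purely as an integral varifold in a ball of bounded geometry. It needs only $\mass{V}(\nball_2)\le\Lambda$ plus a bound on the \emph{area} generalized mean curvature $H(V)$, the quantity appearing in (s1).

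The paper takes as its single input that $|H(V)|$ is bounded in terms of $|g|_0$, with constants depending on $\mu,\mu_1$. It then applies the theorem verbatim after rescaling. For the area integrand this input is immediate: $\vec H=g\nu$ on $\genreg V$ and $\|\delta V\|_{\textup{sing}}=0$ give $|H(V)|\le\Gamma$ $\mass{V}$-a.e.

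Your opening paragraph instead bounds the mean curvature \emph{with respect to the elliptic integrand} on $\reg_1 V$ and treats it as the varifold's $H$. These are different objects. A bound on $\textnormal{tr}(D^2_pF\,A)$ does not control $\textnormal{tr}\,A$ unless $D^2_pF$ is a multiple of the identity on the tangent plane. This conflation is what pushed you into re-deriving Naber--Valtorta for anisotropic functionals.

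Replace Steps 1--2 by the bound on $H(V)$, and your Step 3 is then the paper's proof.
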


The following two lemmas are the versions of \cite{aiex-mccurdy-minter2024}*{Lemmas 4.3 \& 4.4} for the current setting.
The proof relies solely on Besicovitch covering theorem and Lemma \ref{lemma:unstable disjoint sets}.
We stress that at this point we do not make use of the Minkowski estimates from Theorem \ref{rescaled quantitative estimates} or $\varepsilon$-regularity.
The proofs are exactly the same as in \cite{aiex-mccurdy-minter2024} using the Besicovitch Theorem \cite{federer1969}*{Theorem 2.8.14} for metric spaces (and thus the constant will also depend on the geometric bound $K$ of $(N,h)$).

\begin{lemma}\label{low stability bound}
Let $n\geq 2$, $\Lambda\in (0,\infty)$, $\Gamma\in (0,\infty)$, $\mu,\mu_1>0$, $K\in (0,1/2)$, $I\in \{1,2,\dotsc\}$, and $\alpha\in(0,1]$.
Suppose $(N^{n+1},h)$ is a Riemannian manifold with $0\in N$, $\left|\left.\textnormal{sec}\right|_{B^N_2}\right| \leq K$, $\left.\inj\right|_{B^N_2}\geq K^{-1}$ and $g:\nball_2\rightarrow\real$ is a function of class $C^{1,\alpha}$.

Then, there exists $C_0 = C_0(n,K)\in (0,\infty)$ such that for any varifold $V\in \sclass_{g,I}(\nball_2)$ we have
    $$\H^{n+1}\left(\nball_r(s_V^{-1}(0,r))\cap \nball_{1/2}\right) \leq C_0 I r^{n+1}\ \ \ \ \text{for all }r\in (0,1/2].$$
\end{lemma}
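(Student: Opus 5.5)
We will prove the bound by a Besicovitch covering of the bad set $s_V^{-1}(0,r)\cap\nball_{1/2}$ by balls of radius comparable to $r$, and then use Lemma \ref{lemma:unstable disjoint sets} to show that each Besicovitch subfamily of disjoint balls has at most $I$ members. Set $E:=s_V^{-1}(0,r)\cap\nball_{1/2+r}$; note that only points at distance less than $r$ from $\nball_{1/2}$ matter, so $\nball_r(s_V^{-1}(0,r))\cap\nball_{1/2}\subset\nball_r(E)$. For each $x\in E$, since $s_V(x)<r$, the definition of the stability radius (Definition \ref{defn:stab-radius}) gives an admissible set $\O_x\subset\nball_{r}(x)$, of the form $\Omega\setminus Z$ as in Definition \ref{admissible varifolds}(s6), with $\indexa(S_{\O_x};\O_x)\geq 1$. (If $s_V\equiv\infty$ the set $E$ is empty and there is nothing to prove.)

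The covering step goes as follows. We apply the Besicovitch covering theorem for the metric space $\nball_2$ (\cite{federer1969}*{Theorem 2.8.14}) to the family $\{\nball_{2r}(x)\}_{x\in E}$. Since $r\le 1/2$, $|\textnormal{sec}|\le K$ and $\inj\ge K^{-1}$ on $\nball_2$, small balls are uniformly bi-Lipschitz to Euclidean balls, so the theorem applies with a constant $\beta=\beta(n,K)$. It produces subfamilies $\mathcal{G}_1,\dots,\mathcal{G}_\beta$, each consisting of pairwise disjoint balls, whose union covers $E$. The enlarged balls $\nball_{3r}(x)$ then cover $\nball_r(E)$. By volume comparison, $\H^{n+1}(\nball_{3r}(x))\le c(n,K)r^{n+1}$, so
$$\H^{n+1}\bigl(\nball_r(E)\bigr)\le c(n,K)\,r^{n+1}\sum_{j=1}^{\beta}\#\mathcal{G}_j.$$

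The key step is to show that $\#\mathcal{G}_j\le I$ for each $j$. Take any finite collection of balls $\nball_{2r}(x_1),\dots,\nball_{2r}(x_m)$ in $\mathcal{G}_j$. These are pairwise disjoint, so the associated sets $\O_{x_l}\subset\nball_r(x_l)$ are pairwise disjoint admissible sets with closures in disjoint regions. Applying Lemma \ref{lemma:unstable disjoint sets} inductively to $\O=\bigcup_l\O_{x_l}$ gives
$$m\le\sum_l\indexa(S_{\O_{x_l}};\O_{x_l})\le\indexa(S_\O;\O).$$
The main obstacle is checking that $\O$ is itself a legitimate set to which (s6)(iii) applies, giving $\indexa(S_\O;\O)\le I$, since (s6) is phrased for simply connected $\Omega$ minus $Z$ and a disjoint union of such sets is not simply connected. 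Our first attempt is to sidestep this. The definition of $\indexa(S;U)\le I$ quantifies over all admissible open subsets of $U$, and the lemma only requires $\jfg$-stationarity on $\O$, which holds locally. Alternatively, we can join the pieces inside a single simply connected $\Omega\subset\nball_2$ (for instance a ball containing them all) and use the monotonicity of index under inclusion: test functions supported in $\O$ are test functions on $\Omega\setminus Z$, where $Z$ is the union of the removed sets. Either way we get $m\le I$, hence $\#\mathcal{G}_j\le I$, and the lemma follows with $C_0=c(n,K)\beta(n,K)$.
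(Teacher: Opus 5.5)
Your proposal is correct and follows the paper's argument, which just defers to the minimal-case proof of Aiex--McCurdy--Minter: a Besicovitch cover of the relevant part of $s_V^{-1}(0,r)$ by balls of radius comparable to $r$, with Lemma \ref{lemma:unstable disjoint sets} limiting each disjoint subfamily to at most $I$ balls and volume comparison giving the factor $r^{n+1}$. Of your two ways of bounding the index of the disjoint union, only the second is supported by (s6)(iii), which controls the index only on sets $\Omega\setminus Z$ with $\Omega$ simply connected; even there, the plain union of the removed sets $Z_l$ (where $\O_{x_l}=\Omega_l\setminus Z_l$) need not be relatively closed in $\Omega$, so take instead $Z=\bigcup_l (Z_l\cap C_l)$ with each $C_l\subset\Omega_l$ compact, which still keeps every $\O_{x_l}$ inside $\Omega\setminus Z$ (a routine point the paper does not address).
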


\begin{lemma}\label{covering lemma}
Let $n\geq 2$, $\Lambda\in (0,\infty)$, $\Gamma\in (0,\infty)$, $\mu,\mu_1>0$, $K\in (0,1/2)$, $I\in \{1,2,\dotsc\}$, and $\alpha\in(0,1]$.
Suppose $(N^{n+1},h)$ is a Riemannian manifold with $0\in N$, $\left|\left.\textnormal{sec}\right|_{B^N_2}\right| \leq K$, and $g:\nball_2\rightarrow\real$ is a function of class $C^{1,\alpha}$.

Then, there exists a constant $C_0 = C_0(n,K)\in (0,\infty)$ such that the following holds: if $V\in\sclass_{g,I}(\nball_2)$, then for any $0<a<b<\infty$, $\gamma\in (0,1)$, and subset $A\subset s_V^{-1}([a,b])\cap \nball_1$, there exists a finite set $B\subset A$ such that
    $$A\subset\bigcup_{y\in B}\nball_{\gamma s_V(y)}(y)$$
    and moreover we have the size bound
    $$|B|\leq C_0 I\left(\frac{b}{a}\right)^{n+1}\left(1+\gamma^{-1}\right)^{n+1}.$$
\end{lemma}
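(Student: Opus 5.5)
We would prove Lemma \ref{covering lemma} with a Besicovitch-type covering whose balls have radii comparable to the stability radius, and then bound how many balls can overlap using the index via Lemma \ref{lemma:unstable disjoint sets}.

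\textbf{Step 1: the cover.} For each $y\in A$ consider the ball $\nball_{\gamma s_V(y)/2}(y)$, whose radius is at most $b$ and comparable to $\gamma a$. Since $A\subset\nball_1$ is bounded and $|\textnormal{sec}|\leq K$ on $\nball_2$, the Besicovitch covering theorem for metric spaces \cite{federer1969}*{Theorem 2.8.14} applies (directional limitedness holds at scales below $K^{-1}$; at larger scales we simply cover $\nball_1$ by boundedly many pieces). It gives a dimensional number $M=M(n,K)$ of subfamilies $\mathcal{B}_1,\dots,\mathcal{B}_M$, each consisting of pairwise disjoint balls, whose union covers $A$. Compactness of $\overline{A}$ lets us make each family finite. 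Enlarging the radii from $\gamma s_V/2$ to $\gamma s_V$ keeps the covering property.

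\textbf{Step 2: counting with the index.} It suffices to bound each disjoint family $\mathcal{B}_j$. This is the key step.
\begin{itemize}
\item[(a)] For a centre $y$ and any $r>s_V(y)$, the ball $\nball_r(y)$ contains an admissible $\O$ as in (s6) with $\indexa(S_\O;\O)\geq 1$, by the definition of $s_V$.
\item[(b)] Take the enlarged balls $\nball_{2s_V(y)}(y)$, $y\in\mathcal{B}_j$. We extract a subfamily that is pairwise disjoint and whose size is at least a fixed fraction of $|\mathcal{B}_j|$.
\item[(c)] Lemma \ref{lemma:unstable disjoint sets}, applied inductively to the disjoint unstable sets $\O_y$ (their union is again admissible up to orientation, since $\nball_2$ is orientable), shows that this subfamily has at most $I$ elements.
\end{itemize}

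\textbf{Step 3: volume comparison for (b).} This is where $b/a$ and $\gamma$ enter, so it is essentially a packing estimate.
\begin{itemize}
\item The balls $\nball_{\gamma s_V(y)/2}(y)$, $y\in\mathcal{B}_j$, are disjoint and have radii at least $\gamma a/2$.
\item Any ball $\nball_{2s_V(z)}(z)$ has radius at most $2b$. So the centres $y$ whose doubled balls can meet it lie in $\nball_{4b}(z)$.
\item Relative volume comparison (Bishop--Günther, valid since the radii are below $K^{-1}$) bounds the number of such $y$ by $C(n,K)\bigl(\tfrac{4b+\gamma a}{\gamma a}\bigr)^{n+1}\leq C(n,K)(b/a)^{n+1}(1+\gamma^{-1})^{n+1}$.
\item A greedy selection, choosing a ball and discarding all doubled balls meeting it, therefore yields a disjoint subfamily of size at least $|\mathcal{B}_j|/\bigl[C(b/a)^{n+1}(1+\gamma^{-1})^{n+1}\bigr]$.
\end{itemize}
Combining this with Step 2(c) gives
$$|\mathcal{B}_j|\leq C I(b/a)^{n+1}(1+\gamma^{-1})^{n+1},$$
and summing over the $M$ families proves the stated bound.

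\textbf{Main obstacle.} The hard part is Step 2(c). One must check that the unstable admissible sets $\O_y\subset\nball_{2s_V(y)}(y)$ can be chosen of the form required in (s6): a simply connected set minus a small closed set. One must also check that a union of such disjoint sets is a legitimate domain for Lemma \ref{lemma:unstable disjoint sets} and for the global bound $\indexa\leq I$. We would try first to keep each $\O_y$ inside a geodesic ball, and to apply the lemma inductively on admissible sets. If needed, we would connect the sets by thin tubes avoiding the support, which does not affect their index.
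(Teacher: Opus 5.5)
Your proposal is correct and follows essentially the same route as the paper, which defers to \cite{aiex-mccurdy-minter2024}: a Besicovitch cover by balls of radius comparable to $\gamma s_V$, then a bound on each disjoint subfamily obtained by extracting pairwise disjoint enlarged (hence unstable) balls, of which there are at most $I$ by Lemma~\ref{lemma:unstable disjoint sets}, with the factor $(b/a)^{n+1}(1+\gamma^{-1})^{n+1}$ coming from volume packing. The obstacle you flag is also glossed over in the paper, and tubes avoiding the support are neither always available (the support can separate $B^N_2$) nor needed, since index only grows when the domain is enlarged: properness in (s6)(ii) forces each unstable $\mathcal{O}_y$ to miss the closed set $\Sigma=\spt\|V\|\setminus\genreg\,V$, which satisfies $\mathcal{H}^{n-7+\beta}(\Sigma)=0$, so all the $\mathcal{O}_y$ lie in the single set $\Omega\setminus\Sigma$ for one simply connected $\Omega\subset\subset B^N_2$ (e.g.\ a geodesic ball $B^N_\rho(0)$ with $\rho<2$, under the injectivity radius bound assumed where the lemma is applied), and (s6)(iii) gives index at most $I$ there.
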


The proof of the main theorem now follows the same ideas as in \cite{aiex-mccurdy-minter2024} with the corresponding auxiliary results for the prescribed mean curvature setting.

\begin{theorem}\label{main theorem n>7}
Let $n\geq 8$, $\Lambda\in (0,\infty)$, $\Gamma\in (0,\infty)$, $\mu,\mu_1>0$, $K\in (0,1/2)$, $I\in \{0,1,2,\dotsc\}$, and $\alpha\in(0,1]$.
Suppose $(N^{n+1},h)$ is a Riemannian manifold with $0\in N$, $\left|\left.\textnormal{sec}\right|_{B^N_2}\right| \leq K$, $\left.\inj\right|_{B^N_2}\geq K^{-1}$ and $g:\nball_2\rightarrow\real$ is a function of class $C^{1,\alpha}$. If $|g|_{1,\alpha}\leq \Gamma$ and $V\in\sclass_{g,\Lambda,I,\mu,\mu_1}(\nball_2)$ then $\Sigma=\spt\mass{V}\setminus\genreg\,V$ is countably $(n-7)$-rectifiable and we have, for any $0<r\leq 1/2$:
\begin{equation*}
\begin{aligned}
\haus^{n+1}\left(\nball_{r/8}\left(\Sigma\right)\cap\nball_{1/2}\right)\leq C_0(1+I)r^8;\\
\mass{V}\left(\nball_{r/8}\left(\Sigma\right)\cap\nball_{1/2}\right)\leq C_0(1+I)r^7.
\end{aligned}
\end{equation*}
In particular, $\mathcal{M}^{*n-7}(\Sigma)\leq C_0(1+I)$. Here, $C_0=C_0(n,\mu,\mu_1,\Lambda,K,\Gamma,\alpha)\in(0,\infty)$.
\end{theorem}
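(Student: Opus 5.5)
The plan is to follow the strategy of \cite{aiex-mccurdy-minter2024}: decompose $\Sigma\cap\nball_{1/2}$ according to the size of the stability radius, and treat each piece with the tools of Sections \ref{sec:prelim}--\ref{sec:quantitative}. Fix $r\in(0,1/2]$ and split
\[
\Sigma\cap\nball_{1/2}\subset \bigl(s_V^{-1}(0,r)\bigr)\cup\bigcup_{j\geq 0} s_V^{-1}\bigl([2^jr,2^{j+1}r)\bigr)\cup s_V^{-1}([1,\infty]).
\]
The case $I=0$ is handled by the last piece alone, so we may assume $I\geq 1$.

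\textbf{Low stability radius.} The set where $s_V<r$ is controlled directly by Lemma \ref{low stability bound}. It gives volume at most $C_0Ir^{n+1}\leq C_0 I r^8$, since $r\leq 1/2$ and $n+1\geq 9$. No regularity input is needed here.

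\textbf{Stable scales.} For the dyadic shell $A_j:=s_V^{-1}([2^jr,2^{j+1}r))\cap\Sigma\cap\nball_{1/2}$:
\begin{enumerate}
\item Apply Lemma \ref{covering lemma} with $a=2^jr$, $b=2^{j+1}r$ and a fixed $\gamma$ (say $\gamma=1/4$). This covers $A_j$ by at most $C_0I$ balls $\nball_{\gamma s_V(y)}(y)$.
\item On each ball $\nball_{s_V(y)/2}(y)$, Lemma \ref{lemma:stability radius continuity} shows $V$ is strongly stable. Hence $V$ restricted to this ball lies in $\sclass_{g,0}$ at scale $\rho\sim s_V(y)\sim 2^jr$.
\item Rescale. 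Corollary \ref{corollary epsilon regularity} places $\Sigma$, which lies in $\badreg^{Q_0}_{\varepsilon_0\sigma/2}$ for every $\sigma$ by Lemma \ref{regularity scale at genreg}, inside the quantitative stratum $\strata^{n-7}_{\varepsilon_0,r',\rho}$ for all $r'$.
\item Theorem \ref{rescaled quantitative estimates}, applied at scale $R=\rho$, then bounds the $r$-neighbourhood volume in each ball by $C\,r^{8}\rho^{n-7}$.
\end{enumerate}
Summing over the at most $C_0I$ balls gives a bound for shell $j$ of order $C I r^8(2^jr)^{n-7}$. For the shells to sum to $CIr^8$, one would first try to count balls by volume packing rather than use the crude per-shell bound: Lemma \ref{covering lemma} controls overlap, so the total measure of the balls is at most $CI$, which compensates for the $(2^jr)^{n-7}$ factor. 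Otherwise we follow exactly the bookkeeping of \cite{aiex-mccurdy-minter2024}*{Theorem 4.5}. The piece $s_V\geq 1$ is one application of Theorem \ref{rescaled quantitative estimates} at unit scale, giving $C r^8$.

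\textbf{Mass bound, rectifiability, and the main obstacle.} The $\mass{V}$-bound follows from the volume bound by covering $\nball_{r/8}(\Sigma)$ with a maximal $r/8$-separated net. Each point of the net is $r/8$-close to $\Sigma$, so the volume bound counts at most $C(1+I)r^{-(n-7)}$ net balls. The monotonicity formula, valid since $|H|\leq|g|_0\leq\Gamma$, gives mass at most $Cr^n$ per ball, hence $\mass{V}$ of the neighbourhood is at most $C(1+I)r^7$. Countable $(n-7)$-rectifiability is local: it follows from Naber--Valtorta applied on each stable ball, using the inclusion of $\Sigma$ in the stratum from Corollary \ref{corollary epsilon regularity}. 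The main obstacle I expect is the summation over dyadic shells: the estimates must be uniform in $j$ and not lose a factor $\log(1/r)$. The remedy is to use the Lipschitz property of $s_V$ and the disjointness from Lemma \ref{lemma:unstable disjoint sets} to bound the total number of balls across all scales by $CI$ weighted appropriately, as in the original argument.
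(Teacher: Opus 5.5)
\textbf{Where the proposal matches the paper.} Your decomposition by the stability radius is exactly the paper's proof, and so are its ingredients: Lemmas \ref{low stability bound} and \ref{covering lemma} for counting balls, Lemma \ref{regularity scale at genreg} with Corollary \ref{corollary epsilon regularity} to place $\Sigma$ in $\strata^{n-7}$, the per-ball Naber--Valtorta bound $Cr^8\rho^{n-7}$, and the net/monotonicity argument for the mass bound. The only cosmetic difference is that the paper indexes shells from the top, $A_j=s_V^{-1}([2^{-j-1},2^{-j}])$, and takes $\gamma=1/8$.

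\textbf{The gap: the dyadic summation.} The step you single out as the main obstacle is not one, and the ``remedy'' you propose for it is not available. Lemma \ref{covering lemma} only bounds the number of balls within a single shell, namely $|B|\leq C_0I(b/a)^{n+1}(1+\gamma^{-1})^{n+1}$. It gives no control on overlap between shells and no bound on any ``total measure'' of the balls. It also gives no bound on the number of balls across all scales, which a priori could be of order $I\log(1/r)$. So, as written, your summation rests on an unproved claim.

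\textbf{The fix: your per-shell bound already sums.} The factor $(2^jr)^{n-7}$ is a gain, not a loss. Only shells with $2^jr<1$ occur, and $n-7\geq 1$, so
\[
\sum_{j\,:\,2^{j}r<1} CIr^8(2^jr)^{n-7}\;\leq\; CIr^8\sum_{m\geq 0}2^{-m(n-7)}\;\leq\; 2CIr^8 .
\]
This is a geometric series dominated by the top shell. In the paper's indexing it is the bound $C_*Ir^8(2^{-(n-7)})^j$ summed over $j$. This is precisely where the hypothesis $n\geq 8$ enters. For $n=7$ every shell contributes about $CIr^8$ and one really does lose a factor $\log(1/r)$; that is why the paper treats $n+1=8$ separately via Song's argument.

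\textbf{Minor points.}
\begin{itemize}
\item Decompose $\Sigma\cap\nball_1$ rather than $\Sigma\cap\nball_{1/2}$. Points of $\Sigma$ just outside $\nball_{1/2}$ still contribute to $\nball_{r/8}(\Sigma)\cap\nball_{1/2}$; the paper uses the inclusion $\nball_{r/8}(\Sigma)\cap\nball_{1/2}\subset\nball_{r/8}(\Sigma\cap\nball_1)$.
\item Your sketch of rectifiability, via Naber--Valtorta on countably many stable balls, is reasonable. The paper's proof does not treat rectifiability separately.
\end{itemize}
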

\begin{proof}
Fix $r\in (0,1/2]$.
We observe that $\nball_{r/8}(\Sigma)\cap \eball_{1/2}\subset \nball_{r/8}(\Sigma\cap \nball_1)$. 
Write
\begin{equation*}
\Sigma = \left(s_V^{-1}(0,r)\cup s_V^{-1}([r,1])\cup s_V^{-1}((1,\infty])\right)\cap \Sigma.
\end{equation*}

Firstly it follows from Lemma \ref{low stability bound} that 
\begin{equation*}
\H^{n+1}\left(\nball_{r/8}(s_V^{-1}(0,r)\cap\Sigma)\cap \nball_{1/2}\right) \leq C_0Ir^{n+1}
\end{equation*}
where $C_0 = C_0(n,K)\in (0,\infty)$.

Secondly, it follows from Lemma \ref{regularity scale at genreg} that if $x\in\Sigma$ then $\regscale^Q_V(x)=0$ for all $Q>0$, that is, $\Sigma\subset \badreg^Q_\sigma(V)$ for all $\sigma>0$ and $Q>0$.
Take $A=\overline{s_V^{-1}((1,\infty])\cap\nball_1}$ in Corollary \ref{corollary epsilon regularity}, which gives
\begin{equation*}
s_V^{-1}((1,\infty])\cap\Sigma\cap\nball_1\subset\strata^{n-7}_{\varepsilon_0,r,1/2}(V).
\end{equation*}
Hence,
\begin{equation*}
\H^{n+1}\left(\nball_{r/8}(s_V^{-1}((1,\infty])\cap\Sigma)\cap \nball_{1/2}\right)\leq C_{\varepsilon_0}(1/2)^{n-7}r^8,
\end{equation*}
where $C_{\varepsilon_0}$ is given by Theorem \ref{rescaled quantitative estimates}.

Finally, set $k_0:= \min\{k\in\mathbb{Z}_{\geq 1}:\tfrac{1}{2}2^{-k}\leq r\}$. 
Then for $j=0,1,\dotsc,k_0$, set
\begin{equation*}
A_j:= s_V^{-1}([2^{-j-1},2^{-j}])\cap\Sigma\cap \nball_1.
\end{equation*}
Now, applying Lemma \ref{covering lemma} with $a = 2^{-j-1}$, $b=2^{-j}$, $\gamma = 1/8$, we obtain for each $j=0,1,\dotsc,k_0$ a set $B_j\subset A_j$ with $|B_j|\leq 18^{n+1}C_1I$, where $C_1 = C_1(n,K)\in (0,\infty)$, and
\begin{equation*}
A_j\subset \bigcup_{y\in B_j}\nball_{s_V(y)/8}(y).
\end{equation*}
Observe that, by continuity of the stability radius (Lemma \ref{lemma:stability radius continuity}), we know that $s_V\geq 2^{-j-1}$ on $\overline{A}_j$, and thus using Lemma \ref{regularity scale at genreg} and Corollary \ref{corollary epsilon regularity} we have that
\begin{equation*}
A_j\subset \strata^{n-7}_{\varepsilon_0,r/8,2^{-j-2}}(V).
\end{equation*}
In particular,
\begin{equation*}
\nball_{r/8}(A_j)\subset\bigcup_{y\in B_j}\nball_{r/8}\left(\strata^{n-7}_{\varepsilon_0,r/8, 2^{-j-2}}(V)\right)\cap \nball_{s_V(y)/8 + r/8}(y)
\end{equation*}
and as $s_V(y)\leq 2^{-j}$ in $A_j$ and since $r< 2^{-k_0} \leq 2^{-j}$,
\begin{equation*}
\nball_{r/8}(A_j)\subset\bigcup_{y\in B_j}\nball_{r/8}\left(\strata^{n-7}_{\varepsilon_0,r/8, 2^{-j-2}}(V)\right)\cap \nball_{2^{-j-2}}(y).
\end{equation*}
Hence by Theorem \ref{rescaled quantitative estimates} we can find a constant $C^\prime$ such that for each $y\in B_j$ we have
\begin{equation*}
\H^{n+1}\left(\nball_{r/8}\left(\strata^{n-7}_{\varepsilon_0,r/8,2^{-j-2}}(V)\right)\cap \nball_{2^{-j-2}}(y)\right) \leq C^\prime\cdot\left(r/8\right)^8\cdot (2^{-j-2})^{n-7}.
\end{equation*}
Hence, combining the above,
\begin{equation*}
\H^{n+1}(\nball_{r/8}(A_j)) \leq |B_j|\cdot C^\prime\cdot (r/8)^8 \cdot (2^{-j-2})^{n-7} \leq C_* Ir^8\cdot \left(2^{-n+7}\right)^{j}
\end{equation*}
and this is true for each $j=0,1,\dotsc,k_0$; here $C_* = C_*(n,\mu,\mu_1,\Lambda,K,\Gamma,\alpha)$. Therefore,
\begin{equation*}
\H^{n+1}\left(\nball_{r/8}(A_0\cup A_1\cup\cdots\cup A_{k_0})\right) \leq C_*Ir^8\cdot\sum^{k_0}_{j=0}\left(\frac{1}{2^{n-7}}\right)^{j}.
\end{equation*}
Since $n\geq 8$, the sum on the right-hand side is bounded above by $\sum^\infty_{j=0} \left(\frac{1}{2^{n-7}}\right)^{j}<\infty$, which only depends on $n$; this completes the bound of the second term. Thus combining all the above we have
\begin{equation*}
\H^{n+1}\left(\nball_{r/8}(\Sigma)\cap \eball_{1/2}\right) \leq C_0I r^{n+1} + C^*_1Ir^8 + C_2 r^{8} \leq \tilde{C}(1+I)r^8
\end{equation*}
for some constants $C_0,C^*_1,C_2,\tilde{C}$ only depending on $n$, $\mu$, $\mu_1$, $\Lambda$, $K$, $\Gamma$ and $\alpha$; this therefore completes the proof of the first claimed bound in Theorem \ref{main theorem n>7}.

To prove the second inequality in Theorem \ref{main theorem n>7}, we shall use the first inequality of Theorem \ref{main theorem n>7} to prove a packing estimate. Indeed, we claim the following: for any $r\in (0,1/2]$, we can find a covering of $\Sigma\cap \nball_{1/4}$ by balls $\{\nball_r(x_i)\}_{i=1}^L$ with $x_i\in\Sigma\cap B^N_{1/4}$ and $L\leq C_3(1+I)r^{-(n-7)}$, where $C_3 = C_3(n,\mu,\mu_1,\Lambda,K,\Gamma,\alpha)$. Indeed, to see this simply choose $x_i\in \Sigma\cap \nball_{1/4}$ a maximal collection of points such that $\{\nball_{r/2}(x_i)\}_{i}$ is a pairwise disjoint collection. 
Then, by construction we have $\Sigma\cap \nball_{1/4} \subset \bigcup_{i=1}^L \nball_r(x_i)$, and
\begin{align*}
L r^{n-7} = \sum_{i=1}^L r^{n-7} & \leq \hat{C}_1\omega_{n+1}^{-1}2^{n+1}\cdot r^{-8}\sum_{i=1}^L \H^{n+1}(\nball_{r/2}(x_i))\\
&\leq \hat{C}_1\omega_{n+1}^{-1}2^{n+1}\cdot r^{-8} \H^{n+1}(\nball_{r/2}(\Sigma\cap \nball_{1/4}))\\
& \leq \hat{C}_1\omega_{n+1}^{-1}2^{n+1}\cdot r^{-8}\cdot C(1+I)r^{8}\\
& \equiv C_3(1+I)
\end{align*}
where $C_3 = \omega^{-1}_{n+1}2^{n+1}\hat{C}_1C$, where $\hat{C}_1=\hat{C}_1(n,K)$ only depends on $n$ and the geometry of $N$ and $C = C(n,\mu,\mu_1,\Lambda,K,\Gamma,\alpha)$ is the constant from the first inequality we have already established; this proves the packing estimate claimed. Thus, using this cover we now have
\begin{align*}
\|V\|(\nball_{r/8}(\Sigma)\cap \nball_{1/8}) & \leq \|V\|(\nball_{r}(\Sigma\cap \nball_{1/4}))\\
&\leq \|V\|\left(\bigcup_{i=1}^L\nball_{2r}(x_i)\right) \leq \sum_{i=1}^L\|V\|(\nball_{2r}(x_i))\\
& \leq \sum_{i=1}^L \hat{C}_2(2r)^n\|V\|(\nball_1(x_i))\\
& \leq \hat{C}_22^n\Lambda \cdot r^n L \leq 2^n\Lambda \hat{C}_2C_3(1+I) r^7
\end{align*}
where $\hat{C}_2=\hat{C_2}(n,\Lambda)$; in the fourth inequality here we have used the monotonicity formula for varifolds with locally bounded first variation \cite{simon1983:gmtbook}*{\textsection 17}. This therefore completes the proof of the second estimate.
\end{proof}

\section{Ambient Dimension at Most $8$}\label{case n+1=8}

The remaining case of Theorem \ref{thm:A} is when $n+1=8$. Here, the singular set will consist of isolated points and thus the size of the singular set is invariant under local scaling. The previous proof given when $n+1\geq 9$ therefore will not work. In fact, the Minkowski estimate is equivalent to estimating the (finite) number of singular points, that is, estimating $\haus^0$. The mean curvature zero case of Theorem \ref{thm:A} (when $n+1=8$) was proved by Song in \cite{song.a2023}, and we will follow those ideas to prove Theorem \ref{thm:A} when $n+1=8$ in our setting as well. Indeed, the core of Song's proof is purely combinatorial and does not rely on the specific structure of the variational object other than through various uniform estimates, which we have in our setting. However, some minor modifications are needed to accommodate our PMC hypersurfaces, which we detail here.

Once we have these modifications, Theorem \ref{thm:A-0} also follows via the same arguments in \cite{song.a2023}.

Therefore, we will only highlight the small changes needed to verify that Song's arguments still hold in the present setting, without repeating proofs unnecessarily. Essentially, one is able to replace every instance in Song's work where the Schoen--Simon \cite{schoen-simon1981} sheeting theorem is used with the corresponding sheeting theorem of Bellettini--Wickramasekera \cite{bellettini-wickramasekera2019:arxiv}. We also include a proof of \cite{song.a2023}*{Page 2085, (7)} in a more general form in Appendix \ref{app:B}.

For these reasons, throughout this section we will focus on the case $n+1=8$.

We begin by observing that the notion of stability radius $\textbf{r}_{\text{stab}}$ defined in \cite{song.a2023}*{Definition 2.1}, with $\lambda=2$ and $\bar{r}>0$, corresponds to our notion of stability radius via $\textbf{r}_{\text{stab}}(x)=\min\{\frac{1}{2}s_V(x),\bar{r}\}$. A key part of Song's argument is the definition of an \emph{almost conical region}, in which the hypersurfaces look like \emph{minimal} hypercones. Minimality is important as one uses the Frankel property for the links of minimal cones. In the PMC setting, we need to ensure that $\bar{r}$ is chosen small enough, depending on the upper bound $\Gamma$ on $g$ as well as on $(N,h)$, to ensure this still holds. We will therefore use the notation $\textbf{r}_{\text{stab}}$ in this section to make the distinction clear.

This illustrates the primary modification needed when using Song's arguments in the PMC setting: one needs to allow the various quantities to depend on the upper bound on $\|g\|_{C^{1,\alpha}}$, or otherwise assume in various places that not only is the metric close to Euclidean, but also that $\|g\|_{C^{1,\alpha}}$ is small, so that in contradiction/compactness arguments the limit of the sequence is a \emph{minimal} cone. This can, of course, be achieved by working on sufficiently small balls (i.e.~$\bar{r}$ small) or by rescaling the metric.

Another place one might be concerned with is the presence of touching points in $\genreg\,V$, and so the `smooth' part of $V$ is not embedded. However, touching singularities where the order is preserved is sufficient for the arguments, as then limiting minimal objects will coincide. As our sheeting theorems allow for multiplicity, one does not encounter difficulties. On this note, whenever $y\in\genreg\,V$ is a point of self-intersection we write $A_V(y)$ to be the second fundamental form of $V$ in the sense of Definition \ref{regularity scale}.

To illustrate these points, we first prove a version of the reverse curvature bound similar to \cite{song.a2023}.

\begin{lemma}\label{reverse curvature estimate}
Let $\mu,\mu_1,\Lambda,\Gamma\in(0,\infty)$, $K\in(0,1/2)$, and $\alpha\in (0,1]$. 
Suppose $(N,h)$ is a Riemannian manifold of dimension $8$ with $0\in N$, $|\textnormal{sec}|_{B^N_2}| \leq K$ and $g:\nball_2\rightarrow\real$ is a $C^{1,\alpha}$ function. Then, there exist constants $\varepsilon_0 = \varepsilon_0(\mu,\mu_1,\Lambda,K,\Gamma,\alpha)\in(0,1)$ and $\bar{r} = \bar{r}(\mu,\mu_1,\Lambda,K,\Gamma,\alpha)\in (0,1)$ with the following property.
If $V\in\ivarifolds_7(N)$, $r\in (0,\bar{r})$, and $x\in N$ satisfy:
\begin{enumerate}
\item[\textnormal{(a)}] $V$ satisfies Definition \ref{admissible varifolds}\textnormal{(s1)-(s5)} with respect to $g$, where $|g|_{1,\alpha}\leq \Gamma$; and
\item[\textnormal{(b)}] $2r\leq\inj(x)$ and $V$ is strongly unstable in $B^N_{2r}(x)$;
\end{enumerate}
then either
\begin{enumerate}
\item [\textnormal{(i)}] $(\supp\mass{V}\setminus\genreg\,V)\cap\nball_{2r}(x)\neq\emptyset$; or
\item [\textnormal{(ii)}] there exists $y'\in\nball_{2r}(x)$ such that $r|A_V(y')|>\varepsilon_0$.
\end{enumerate}
\end{lemma}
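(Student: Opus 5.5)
We argue by contradiction and compactness, following the proof of the reverse curvature bound in \cite{song.a2023}. Suppose that for every choice of $\varepsilon_0,\bar r$ the lemma fails. Then there are sequences $\varepsilon_i,\bar r_i\to 0$ and data $(N_i,h_i)$, $g_i$ with $|g_i|_{1,\alpha}\le\Gamma$, varifolds $V_i$ satisfying (s1)--(s5), points $x_i$ and radii $r_i<\bar r_i$ such that three things hold. First, $V_i$ is strongly unstable in $\nball_{2r_i}(x_i)$. Second, $\spt\mass{V_i}\cap\nball_{2r_i}(x_i)\subset\genreg\,V_i$. Third, $r_i|A_{V_i}|\le\varepsilon_i$ on $\nball_{2r_i}(x_i)$.

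We rescale exactly as in the proof of Theorem \ref{epsilon regularity}. Pull back by $\exp_{x_i}$ and dilate by $\eta_{0,r_i}$. This produces varifolds $W_i$ in $\eball_2\subset T_{x_i}N\cong\R^8$, with metrics $h_i$ converging smoothly to the Euclidean metric, since $r_i\to0$ and $|\textnormal{sec}|\le K$. The prescribing functions become $\hat g_i=r_i\sqrt{|h|}\,g_i\circ\exp_{x_i}\circ\eta^{-1}$, so $|\hat g_i|_{1,\alpha}\le C\Gamma r_i\to0$. The elliptic integrands also rescale, and their $z$-derivatives carry factors $r_i^{|I|}\mu_1^{|I|}\to0$. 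Hence, after passing to a subsequence, they converge to a constant-coefficient (in $z$) integrand, and the sequence stays in $\tilde{\mathcal{I}}(\mu,\mu_1,\cdot)$.

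Both instability and the second fundamental form bound are scale-invariant. So $W_i$ is strongly unstable in $\eball_2$, $\spt\mass{W_i}\cap \eball_2=\genreg\,W_i$, and $|A_{W_i}|\le\varepsilon_i\to0$ there. The mass bound $\mass{V}(U)\le\Lambda$ together with monotonicity (using $|H|\le C\Gamma$, from (s1) and the first variation) gives uniform local mass bounds for $W_i$ in $\eball_2$. Because $|A|\to0$ and every point is generalized regular, each $W_i$ is locally a union of ordered $C^2$ graphs ($u_1\le u_2$ at touching points). By the curvature bound these graphs are nearly flat at unit scale. Note that $\mass{W_i}$ is bounded, and density is at least $1$ at each sheet point. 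Using Lemma \ref{regularity scale at genreg} and the uniform curvature bound, we conclude that on $\eball_{3/2}$ each $W_i$ is a union of at most $Q=Q(\Lambda)$ graphs over a common hyperplane (after a rotation). These graphs have $C^2$ norm tending to zero, so they converge in $C^{1,\beta}$, and in fact in $C^2$ after invoking elliptic estimates from the Higher Regularity Theorem \cite{bellettini-wickramasekera2019:arxiv}*{Theorem 6.4}, to a union of parallel affine hyperplanes with multiplicity.

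The remaining step, and the main obstacle, is to contradict the instability of $W_i$. We need uniform stability of nearly flat ordered graphs with tiny prescribed curvature. I would argue directly with the second variation. For each sheet $u_j$, the second variation of $\jfg$ along an ambient deformation induced by $\phi\in C^1_c(\eball_2)$ has the form
\[\int \big(D^2_pF\,(\nabla\phi,\nabla\phi) - (|A|_F^2+\textnormal{Ric-type and }\partial_\nu g\textnormal{ terms})\phi^2\big).\]
Here ellipticity ($\mu^{-1}$ bounds) controls the first term from below by $c|\nabla\phi|^2$. The zeroth-order coefficient is $O(\varepsilon_i^2+r_i\mu_1+r_i^2K+r_i^2\Gamma)\to0$. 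By the Poincaré inequality on $\eball_2$ for compactly supported $\phi$, the form is therefore positive for large $i$. Touching sheets are handled sheet by sheet, since an ambient deformation induces a normal variation on each sheet and the functional is additive over sheets (as in the proof of Lemma \ref{lemma:unstable disjoint sets}). This includes deformations that pull touching sheets apart, which are still admissible. Hence $W_i$ is strongly stable in $\eball_2$, a contradiction.

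Fixing $\bar r$ small in terms of $\Gamma,K,\mu_1$ and $\varepsilon_0$ small yields the lemma. The delicate points I expect are justifying the uniform multi-graph decomposition across touching points (a covering plus connectedness argument with $|A|$ small), and checking that the $g$-volume term contributes only $O(r_i\Gamma)$ to the second variation.
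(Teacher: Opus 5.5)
Your strategy is the one the paper sketches: argue by contradiction, rescale $\nball_{2r_i}(x_i)$ to unit size, and observe that a nearly Euclidean metric, $\hat g_i\to0$, vanishing $z$-dependence of the integrand and $|A|\le\varepsilon_i\to0$ force strong stability. The step that does not close as written is the last one. You establish the sheet decomposition, and hence your Poincar\'e inequality, only on $\eball_{3/2}$. Instability in $\eball_2$, however, only gives a destabilising $\phi\in C^1_c(\eball_2)$, which may be supported in $\eball_2\setminus\eball_{3/2}$. Since the instability hypothesis and the curvature/regularity information both live on the same ball $\nball_{2r}(x)$, there is no room to shrink. The structural claims are also not right in general. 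Disjoint nearly flat sheets need not be graphs over a common hyperplane, nor converge to parallel hyperplanes: two flat caps near different boundary points of $\eball_{3/2}$ can have orthogonal normals. All the limit planes must satisfy is that they do not cross inside the ball. Near $\partial\eball_2$ there can also be arbitrarily many small caps, so no sheet count of the form $Q(\Lambda)$ extends to $\eball_2$.

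The repair is easy, and it makes the mass bound, the graph decomposition and the compactness unnecessary. Apply the first variation formula on the abstract immersed manifold $S_{\O}$ to $X=\phi^2\,y$. In the Riemannian setting use $X=\phi^2\,\tfrac12\nabla d^2$ together with Hessian comparison. Together with $|H|\le\sqrt7\,|A|\le\sqrt7\,\varepsilon_i$, this gives
\[
\bigl(7-2\sqrt{7}\,\varepsilon_i-Cr_i^2K\bigr)\int_{S_{\O}}\phi^2\le 4\Bigl(\int_{S_{\O}}\phi^2\Bigr)^{1/2}\Bigl(\int_{S_{\O}}|\nabla\phi|^2\Bigr)^{1/2}.
\]
This is a Poincar\'e inequality $\int\phi^2\le C\int|\nabla\phi|^2$ for every $\phi\in C^1_c(\eball_2)$, valid up to $\partial\eball_2$ and needing no information about how the sheets are arranged. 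Combine it with your second-variation lower bound, absorbing the first-order cross terms of size $O(r_i\mu_1)$ by Cauchy--Schwarz. The result is positivity of the second variation for large $i$, and in fact a direct proof with explicit $\varepsilon_0,\bar r$. Two small caveats remain. First, the bound $D^2_pF(\nabla\phi,\nabla\phi)\ge c|\nabla\phi|^2$ is the uniform ellipticity built into the notion of a parametric elliptic integrand; it does not follow from $\mu^{-1}\le F\le\mu$. Second, your derivation of the mass bound via $|H|\le C\Gamma$ is only clear for the area integrand, but this no longer matters once the argument is run this way.
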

\begin{proof}
Note that if the metric is sufficiently close to the Euclidean metric on a normal neighbourhood, and the second fundamental form is arbitrarily small after scaling, then the varifold must be stable. The result therefore follows by a simple contradiction argument using the convergence to a (minimal) cone.
\end{proof}

One then defines the set of cones of interest, $\mathcal{G}_{\beta_0}$, analogously to \cite[Page 2086]{song.a2023}, using again \cite[Lemma 22]{song.a2023} to find the corresponding value of $\beta_0$. Write $\inj(N) := \inf\{\inj(x):x\in N\}$ and, for $p\in N$ and radii $0<s<r<\inj(p)$, $A(p;s,r):= B^N_r(p)\setminus \overline{B^N_s(p)}$ for the (open) annulus centered at $p$ with inner radius $s$ and outer radius $r$. One then defines the notion of a \emph{pointed} $\delta$-conical annulus as in \cite[Page 2087]{song.a2023}, again using $\mathcal{G}_{\beta_0}$ as above consisting of \emph{minimal} cones. 

As in \cite[Page 2087 (9)]{song.a2023}, one can then compare the stability radius to the outer radius of a sufficiently small pointed $\delta$-conical annulus as follows: there exists $\delta_0 = \delta_0(\mu,\mu_1,\Lambda,\Gamma, K,\alpha)\in (0,1)$ such that if $V\in\ivarifolds_7(N)$, $0<s<\tfrac{t}{2}<\tfrac{1}{2}\bar{r}$, and $p\in N$ satisfy:
\begin{enumerate}
\item [\textnormal{(a)}] $V$ satisfies Definition \ref{admissible varifolds}\textnormal{(s1)-(s5)} with respect to $g$; and 
\item [\textnormal{(b)}] $A(p;s,t)$ is a pointed $\delta_0$-conical annulus,
\end{enumerate}
then for all $x\in N$ such that $\nball_{\frac{1}{2}\textbf{r}_{\text{stab}}(x)(x)}\cap\partial\nball_t(p)\neq\emptyset$ we have
\begin{equation*}
C_0t\leq\frac{1}{2}\textbf{r}_{\text{stab}}(x)\leq 2\left(\frac{t}{2}+s\right).
\end{equation*}
The proof follows exactly the same argument, applying instead Lemma \ref{reverse curvature estimate} after scaling the metric by $t^{-2}$.

The notion of a $(\delta,K)$-telescope is exactly as in \cite[Section 2.2.4 \& Section 5.1]{song.a2023}, as well as the sets $\mathcal{A}_{\delta}$ and $\mathcal{A}^{\text{bis}}_{\delta}$, for suitable constants $K_0$ therein (in \cite{song.a2023} this constant is denoted $K$, but we denote it $K_0$ to avoid repetition with our use of $K$ as a bound on the sectional curvature). One deduces from the above stability radius comparison with the radii of pointed $\delta_0$-telescopes that \cite[Lemmas 5, 6, \& 23]{song.a2023} all still hold in this setting.

The main theorem needed for the combinatorial arguments (in \cite[Section 2.5]{song.a2023}) now follows, namely:
\begin{theorem}[\cite{song.a2023}*{Theorem 8/24}]\label{thm:song-combinatorial}
Let $\mu,\mu_1, \Lambda \in (0,\infty)$ and $\alpha\in(0,1]$.
Then there exist $\beta_1>1$, $\nu>0$, $\bar{R}>1000$, and $H_1>0$ depending on $\mu,\mu_1, \Lambda, \delta_0$, and $K_0$ such that the following is true.
Let $\bar{K}=60K_0^2$, $h$ be a metric $\nu$-close to the Euclidean metric in the $C^5$-topology on $\eball(0,\bar{K})\subset\real^8$, $g$ be a function of class $C^{1,\alpha}$ and
\begin{equation*}
(\Sigma,\partial\Sigma)\subset(\eball(0,\bar{K}),\partial\eball(0,\bar{K}))
\end{equation*}
be a compact PMC hypersurface (that is, $\setv(\Sigma,1)$ satisfies Definition \ref{admissible varifolds}\textnormal{(s1)-(s6)(ii)}) and such that
\begin{itemize}
\item $\haus^7(\Sigma)\leq \frac{1}{2}\Lambda \bar{K}^7$;
\item $\textbf{r}_{\textnormal{stab}}(0)<\bar{K}^{-1}$;
\item there is $y'\in\eball(0,14)$ with $\textbf{r}_{\textnormal{stab}}(y')=1$;
\item $\Theta_h(0,20K_0)-\Theta_h(0\frac{1}{3K_0})\leq \beta_1^{-1}$;
\item $|g|_{1,\alpha}\leq H_1$.
\end{itemize}
Then one of the following holds:
\begin{enumerate}[\textnormal{(a)}]
\item there exist $z',z''\in\eball(0,\bar{K}/2)$ such that $\eball(z',6\textbf{r}_{\textnormal{stab}}(z'))\cap\eball(z'',6\textbf{r}_{\textnormal{stab}}(z'')) = \emptyset$ and $\textbf{r}_{\textnormal{stab}}(z')$, $\textbf{r}_{\textnormal{stab}}(z'')\in[2^{-(R+1)},2^{-R}) \text{ for some }R\in(0,\bar{R}]$;
\item [\textnormal{(b)}] $\Sigma\cap A(0;\frac{1}{2K_0},14K_0)$ is $\delta_0$-close to a cone $C\in\mathcal{G}_{\beta_0}$.
\end{enumerate}
\end{theorem}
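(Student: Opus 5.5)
The plan is to follow Song's proof of \cite{song.a2023}*{Theorem 8/24} essentially verbatim, isolating the few places where minimality was used and replacing them with the PMC regularity theory. The argument is by contradiction. Suppose that for some $\mu,\mu_1,\Lambda,\alpha$ no choice of $\beta_1,\nu,\bar R,H_1$ works. Then we obtain sequences $\beta_1^{(i)}\to\infty$, $\nu_i\to0$, $\bar R_i\to\infty$ and $H_1^{(i)}\to0$, together with metrics $h_i$, functions $g_i$ and hypersurfaces $\Sigma_i$ satisfying all the hypotheses, for which neither (a) nor (b) holds.

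The first step is compactness. Since $|g_i|_{1,\alpha}\to 0$ and $h_i\to$ Euclidean in $C^5$, and since the mass bound $\haus^7(\Sigma_i)\le\frac12\Lambda\bar K^7$ holds, the Compactness Theorem \cite{bellettini-wickramasekera2019:arxiv}*{Theorem 1.4} applies on the region where the $\Sigma_i$ are locally strongly stable. Lemma \ref{lemma:stability radius continuity} and Lemma \ref{lemma:admissible bdd index implies admissible locally stable} supply this local stability away from points where $\textbf{r}_{\text{stab}}$ degenerates. After passing to a subsequence we get a limit varifold $V_\infty$ in $\eball(0,\bar K)$. Because $g_i\to 0$, the limit is \emph{stationary} for the limiting parametric integrand; with the Euclidean area functional, this makes $V_\infty$ a minimal hypersurface, which is what Song's framework needs.

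The second step uses the failure of (a) for every $\bar R_i\to\infty$. This failure says that, at every dyadic scale $2^{-R}$, the points with $\textbf{r}_{\text{stab}}\in[2^{-(R+1)},2^{-R})$ cannot be separated. As in Song, the stability radius is Lipschitz (Lemma \ref{lemma:stability radius continuity}), and we have $\textbf{r}_{\text{stab}}(0)<\bar K^{-1}$ and $\textbf{r}_{\text{stab}}(y')=1$. Together these force the set of ``unstable points'' accumulating in the limit to be concentrated along a single ray or point structure. Hence $V_\infty$ is stable, with finite index, away from a discrete set. The pinching hypothesis $\Theta(0,20K_0)-\Theta(0,\frac{1}{3K_0})\le\beta_1^{-1}\to0$, combined with the monotonicity formula (whose error terms involve $|g_i|$ and $h_i-\delta$ and so vanish), shows that $V_\infty$ is a minimal cone on the annulus $A(0;\frac{1}{3K_0},20K_0)$. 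This cone is stable away from the origin, so it lies in the class from which $\mathcal{G}_{\beta_0}$ is selected. Here the Frankel property and \cite[Lemma 22]{song.a2023} apply because the cone is genuinely minimal, and this is exactly where the smallness of $H_1$ is used.

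The final step upgrades varifold convergence to $\delta_0$-closeness, which contradicts the failure of (b). We do this with the Sheeting Theorem \cite{bellettini-wickramasekera2019:arxiv}*{Theorem 6.2} in place of Schoen--Simon, together with Theorem \ref{epsilon regularity}, on $A(0;\frac1{2K_0},14K_0)$. Touching singularities cause no trouble, because the sheeting allows multiplicity and ordered graphs. Lemma \ref{reverse curvature estimate} and the stability-radius comparison for pointed $\delta_0$-conical annuli ensure that instability cannot concentrate inside the annulus without creating a second separated scale, which would give (a). I expect the main obstacle to be this second step: excluding instability that persists at infinitely many scales in the limit using only the combinatorial negation of (a). That reduction is Song's core combinatorics, and the work here is to check that each uniform estimate it uses (Lemmas 5, 6, 23 of \cite{song.a2023}) has already been verified in the PMC setting above.
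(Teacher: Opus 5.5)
Your proposal follows essentially the same route as the paper. Both argue by contradiction with $\nu\to 0$, $H_1\to 0$ and $\beta_1,\bar R\to\infty$, use the Bellettini--Wickramasekera compactness, sheeting and higher-regularity theorems in place of Schoen--Simon so that the limit is a \emph{minimal} cone, use that the density ratio is approximately monotone with error terms vanishing as $h$ becomes Euclidean and $g\to 0$, and otherwise defer to Song's combinatorial reasoning, which the paper does not reprove either. As in the paper, the step ``the limit is minimal'' relies on the integrand being area: for a general $F\in\mathcal{I}(N,\mu,\mu_1)$ the frozen limiting integrand would be anisotropic, and Song's Frankel-type input would not apply directly.
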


Indeed, arguing by contradiction, we use the corresponding Sheeting Theorem \cite[Theorem 6.2]{bellettini-wickramasekera2019:arxiv} and Higher Regularity Theorem \cite[Theorem 4 \& Theorem 6.4]{bellettini-wickramasekera2019:arxiv} of Bellettini--Wickramasekera in place of those of Schoen--Simon \cite{schoen-simon1981} to obtain in the limit a minimal cone. Then the proof follows the same reasoning as in \cite{song.a2023}*{Theorem 8}. We also need that the density ratio $\Theta_h(0,\cdot)$ of $\Sigma$ (appropriately defined by PMC hypersurfaces in a Riemannian manifold) is approximately monotone increasing and converges to the standard density ratio for minimal hypersurfaces as both the metric converges to the Euclidean metric and the mean curvature converges to $0$ (cf.~\cite[Theorem 5.1]{allard1972}).

We can use Theorem \ref{thm:song-combinatorial} by rescaling the metric on $N$ by a constant that depends only on $h$ and the upper bound $\Gamma$ for the prescribed function $g$. At this point, Song's proof in \cite[Corollary 9, Section 2.4, \& Section 2.5]{song.a2023} is purely combinatorial, relying only on the statements above. Those arguments therefore allow us to conclude Theorem \ref{thm:A} in the remaining case $n+1=8$:

\begin{theorem}\label{main theorem n=7}
Let $I\in{0,1,\ldots}$, $\mu,\mu_1, \Lambda, \Gamma \in (0,\infty)$, $K\in(0,1/2)$, and $\alpha\in(0,1]$. Let $(N,h)$ be a Riemannian manifold of dimension $8$ with $0\in N$, $|\textnormal{sec}|_{B^N_2}| \leq K$, $\left.\inj\right|_{B^N_2}\geq K^{-1}$ and $g:\nball_2\rightarrow\real$ a function of class $C^{1,\alpha}$ with $|g|_{1,\alpha}\leq \Gamma$.
If $V\in\sclass_{g,\Lambda,I,\mu,\mu_1}(\nball_2)$, then $\Sigma=\spt\mass{V}\setminus\genreg\,V$ is a locally finite set in $B^N_2$, and:
\begin{equation*}
\haus^{0}\left(\Sigma\cap B^N_{1/2}\right)\leq C_0(1+I),
\end{equation*}
where $C_0=C_0(\mu,\mu_1,\Lambda,K,\Gamma, \alpha)\in(0,\infty)$.
\end{theorem}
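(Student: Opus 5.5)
The proof will follow Song's combinatorial scheme, \cite{song.a2023}*{Corollary 9, Sections 2.4--2.5}, with the PMC inputs established above. We rescale the metric on $N$ by a constant $\lambda^2$ depending only on $K$ and $\Gamma$. After this rescaling, every ball of radius $\bar K$ appearing in the argument has metric $\nu$-close to Euclidean in $C^5$. Since $g$ scales like a curvature, the rescaled prescribing function satisfies $|\lambda^{-1}g|_{1,\alpha}\leq H_1$. This places us within the hypotheses of Theorem \ref{thm:song-combinatorial}, Lemma \ref{reverse curvature estimate}, and the stability-radius/conical-annulus comparison; the covering of $B^N_{1/2}$ by rescaled pieces costs only a constant depending on $K,\Gamma$.

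\emph{Local finiteness.} Fix $x\in\Sigma$. By Lemma \ref{lemma:admissible bdd index implies admissible locally stable}, $V$ is strongly stable near $x$, so the compactness and regularity theory of \cite{bellettini-wickramasekera2019:arxiv} applies. Its dimension bound $\dim_\H\Sigma\le n-7=0$, combined with the $\varepsilon$-regularity Theorem \ref{epsilon regularity}, shows that $\Sigma$ is discrete in the stable region. Concretely, at stable scales every blow-up is a stable minimal cone in $\R^8$, which has an isolated singularity. Hence $\Sigma$ is locally finite in $B^N_2$.

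\emph{Counting.} Following Song, we first dispose of the singular points with tiny stability radius. By Lemma \ref{lemma:unstable disjoint sets} and the Lipschitz property Lemma \ref{lemma:stability radius continuity}, the singular points lie in scales arranged dyadically, $\textbf{r}_{\text{stab}}\in[2^{-(R+1)},2^{-R})$. Balls $B(z,6\textbf{r}_{\text{stab}}(z))$ that are pairwise disjoint each carry an unstable region, so there are at most $C(I+1)$ of them. For each singular point $p$, we then follow the chain of scales from $\textbf{r}_{\text{stab}}(p)$ up to scale $1$. At each step Theorem \ref{thm:song-combinatorial} applies (after rescaling so that the relevant $\textbf{r}_{\text{stab}}=1$), and one of two things happens. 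Either alternative (a) holds and produces two disjoint unstable balls, which is charged to the index. Or alternative (b) holds and the annulus is $\delta_0$-conical. When (b) occurs, the density drop hypothesis $\Theta_h(0,20K_0)-\Theta_h(0,\tfrac{1}{3K_0})\le\beta_1^{-1}$ must fail at only boundedly many scales; this uses approximate monotonicity of the density ratio with error controlled by $\Gamma$ and $K$, together with the mass bound $\Lambda$. Consecutive conical annuli then form $(\delta_0,K_0)$-telescopes, and \cite{song.a2023}*{Lemmas 5, 6, 23} show that a telescope contains no further singular points beyond its tip and does not increase the count. Song's tree argument then bounds the number of leaves, which are the singular points, by $C(\Lambda,\dots)$ times the number of branchings, which is at most $C(1+I)$. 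This gives $\H^0(\Sigma\cap B^N_{1/2})\le C_0(1+I)$.

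\emph{Main obstacle.} The main difficulty is verifying that each ingredient of the tree argument is genuinely available with uniform constants. The key inputs are the reverse curvature estimate (Lemma \ref{reverse curvature estimate}), the two-sided comparison $C_0t\le\tfrac12\textbf{r}_{\text{stab}}\le2(\tfrac t2+s)$ on pointed conical annuli, and almost-monotonicity of $\Theta_h$ with error terms $O(\Gamma r)+O(Kr^2)$ that sum over dyadic scales. The last point is what bounds the number of non-conical scales by a constant depending only on $\Lambda,\Gamma,K$. Once these are in hand, as established in the preceding discussion, Song's remaining argument is purely combinatorial and carries over verbatim.
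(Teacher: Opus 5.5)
Your proposal follows essentially the same route as the paper. You rescale so that the metric is nearly Euclidean and $g$ is small, then feed the PMC versions of the reverse curvature estimate (Lemma~\ref{reverse curvature estimate}), the stability-radius comparison on pointed $\delta_0$-conical annuli, the telescope lemmas and the dichotomy Theorem~\ref{thm:song-combinatorial} into Song's purely combinatorial counting, with local finiteness coming from the Bellettini--Wickramasekera regularity theory at stable scales. Two minor corrections: the bounds $|\textnormal{sec}|\leq K$, $\inj\geq K^{-1}$ alone do not give $C^5$-closeness to Euclidean after rescaling (the paper's rescaling factor depends on $h$ and $\Gamma$), and the density-pinching condition is a hypothesis needed to invoke Theorem~\ref{thm:song-combinatorial}, not something that follows from alternative (b) --- what almost-monotonicity and the mass bound give is that pinching fails at only boundedly many scales about a given centre.
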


Given the above modifications to the arguments in \cite{song.a2023}, one can deduce Theorem \ref{thm:A-0} in an entirely analogous manner. This completes the proof of our main results.

\appendix

\section{$k$-connected Sets}

In this section we include a simple known topological fact about $k$-connected open sets of a manifold with a set of large Hausdorff codimension removed. This is used in Remark \ref{remark:app-A}. Since we were unable to find a reference, we also include a proof.

\begin{lemma}\label{lemma a1}
Let $n$, $l$, and $k$ be positive integers with $l\leq k\leq n$. Let $S\subset\real^{n+1}$ be a closed set with $\haus^{n-k}(S)=0$ and $\Sigma$ a compact $(l+1)$-dimensional $C^1$ manifold with $\partial\Sigma\neq\emptyset $ and $f:\Sigma\rightarrow\real^{n+1}$ a continuous map with $f(\partial\Sigma)\cap S=\emptyset$.
Then for all $\varepsilon>0$ there exists a continuous map $\tilde{f}:\Sigma\rightarrow\real^{n+1}$ such that:
\begin{enumerate}
    \item [\textnormal{(1)}] $\tilde{f}(\partial\Sigma)=f(\partial\Sigma)$;
    \item [\textnormal{(2)}] $\sup_{x\in\Sigma}|f(x)-\tilde{f}(x)|<\varepsilon$;
    \item [\textnormal{(3)}] $\tilde{f}(\Sigma)\cap S=\emptyset$.
\end{enumerate}
\end{lemma}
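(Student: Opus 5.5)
The plan is a general-position argument: approximate $f$ by a smooth map, then perturb it by a generic translation so the image misses $S$. Dimension counting decides whether a generic translate of an $(l+1)$-dimensional image can hit a set of vanishing $\haus^{n-k}$ measure. Since $l+1\leq k+1$ and $(l+1)+(n-k)\leq n+1$, with equality only when $l=k$, the "sum of dimensions" $(l+1)+(n-k)$ is at most $n+1$. This borderline case is the one needing care. It is fine because $\haus^{n-k}(S)=0$ is a null condition rather than a dimension bound, so the standard Lipschitz-image measure estimate still yields measure zero.

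\textbf{Step 1: smooth approximation.} First choose $\eta>0$ with $\eta<\varepsilon/3$ and $\eta<\tfrac12\operatorname{dist}(f(\partial\Sigma),S)$; this is positive since $f(\partial\Sigma)$ is compact, $S$ is closed, and the two are disjoint. By Whitney/Stone--Weierstrass approximation on the compact $C^1$ manifold $\Sigma$, pick a $C^1$ (hence Lipschitz) map $g:\Sigma\to\real^{n+1}$ with $\sup|f-g|<\eta$.

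\textbf{Step 2: translation with a cutoff.} Fix a continuous cutoff $\chi:\Sigma\to[0,1]$ with $\chi=0$ on $\partial\Sigma$ and $\chi=1$ outside a thin collar $U$ of $\partial\Sigma$. Choose the collar so that $f(\overline U)$ lies in the $\eta/2$-neighbourhood of $f(\partial\Sigma)$, which is possible by uniform continuity of $f$. For $v\in\real^{n+1}$ with $|v|<\eta$, set
\[
\tilde f_v=(1-\chi)f+\chi(g+v).
\]
Then $\tilde f_v=f$ on $\partial\Sigma$, which gives (1), and $|\tilde f_v-f|\leq|g-f|+|v|<2\eta<\varepsilon$, which gives (2). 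On the collar $U$, each value $\tilde f_v(x)$ lies within $\eta/2+2\eta$ of $f(\partial\Sigma)$. To keep these values away from $S$, shrink $\eta$ first, relative to $d:=\operatorname{dist}(f(\partial\Sigma),S)$, so that $3\eta<d$. Note that $d$ is fixed before the collar is chosen, so this is consistent.

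\textbf{Step 3: a generic $v$ avoids $S$ on $\Sigma\setminus U$.} On $\Sigma\setminus U$ we have $\chi=1$, so $\tilde f_v=g+v$. Hence $\tilde f_v(x)\in S$ for some such $x$ exactly when $v\in S-g(\Sigma\setminus U)$. This is the image of $S\times(\Sigma\setminus U)$ under the Lipschitz map $(s,x)\mapsto s-g(x)$, from a set of dimension $n-k+l+1$ into $\real^{n+1}$.

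The key claim, and the main obstacle, is $\haus^{n+1}(S-g(\Sigma))=0$. We would prove it directly with coverings. Given $\delta>0$, cover $S$ by balls of radii $r_i$ with $\sum r_i^{n-k}<\delta$. Cover $\Sigma$ by about $C r_i^{-(l+1)}$ balls of radius $r_i$, and map each to a set of diameter at most $Lr_i$. Then $S-g(\Sigma)$ is covered by sets whose total $(n+1)$-content is bounded by
\[
\sum_i C r_i^{-(l+1)} r_i^{n+1}=C\sum_i r_i^{n-l}\leq C\sum_i r_i^{n-k}<C\delta,
\]
using $l\leq k$ and $r_i\leq 1$. Since $\delta$ is arbitrary, the set is null. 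The borderline $l=k$ thus works precisely because only null measure, not lower dimension, is needed.

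Choosing any $v$ with $|v|<\eta$ outside this null set gives (3) on $\Sigma\setminus U$, and Step 2 already handles the collar $U$. This completes the construction of $\tilde f=\tilde f_v$.
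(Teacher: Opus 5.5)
Your argument is correct and is essentially the paper's: you take a $C^1$ approximation, then translate by a small vector chosen outside the Lebesgue-null difference set ($f'(\Sigma)-S$ in the paper, $S-g(\Sigma)$ for you), with a cutoff freezing the map near $\partial\Sigma$. The differences are minor. You put the cutoff on the domain (a collar plus uniform continuity of $f$) and approximate by a globally $C^1$, hence Lipschitz, map, whereas the paper uses a relative Whitney approximation $f'$ with $f'=f$ on $\partial\Sigma$ and a target cutoff $\phi\circ f'$. Your explicit covering estimate also supplies the product bound that the paper's assertion $\mathcal{H}^{n-k+l+1}(F(\Sigma\times S))=0$ leaves implicit.
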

\begin{proof}
Firstly, it follows from Whitney's Approximation Theorem (see \cite{hirsch1994}*{\textsection 5.1, Lemma 1.5}) that there exists a continuous map $f':\Sigma\rightarrow\real^{n+1}$ such that $f'(\partial\Sigma)=f(\partial\Sigma)$, $f'$ is $C^1$ on $\Sigma\setminus\partial\Sigma$ and $f'$ is arbitrarily close to $f$ in $C^0$.

Now choose $\delta>0$ sufficiently small so that $\eball_{4\delta}(f'(\partial\Sigma))\cap S=\emptyset$. Choose also a $C^1$ cut-off function $\phi:\real^{n+1}\rightarrow[0,1]$ such that $\phi(y)=0$ on $\eball_{\delta}(f'(\partial\Sigma))$ and $\phi(y)=1$ on $\real^{n+1}\setminus\eball_{2\delta}(f'(\partial\Sigma))$.

Now, define $F:\Sigma\times\real^{n+1}\rightarrow\real^{n+1}$ as $F(x,y)=f'(x)-y$.
It follows that $F$ is $C^1$ and therefore $\haus^{n-k+l+1}(F(\Sigma\times S))=0$.
In particular, $\real^{n+1}\setminus F(\Sigma\times S)$ is dense.
We may therefore pick $\tilde{z}\in\real^{n+1}\setminus F(\Sigma\times S)$ arbitrarily close to $0$ and define $\tilde{f}(x):=f'(x)-\phi(f'(x))\tilde{z}$. We choose $\tilde{z}$ such that $|\tilde{z}|<2\delta$.

We claim that $\tilde{f}(\Sigma)\cap S=\emptyset$.
In fact, since $|\tilde{z}|<2\delta$, if $f'(x)\in\eball_{2\delta}(f'(\partial\Sigma))$ then $\tilde{f}(x)\in\eball_{4\delta}(f'(\partial\Sigma))$ and thus $\tilde{f}(x)\not\in S$. Therefore, if there were $s\in S$ such that $\tilde{f}(x) = s$, then we must have $f'(x)\in \real^{n+1}\setminus B_{2\delta}(f'(\partial\Sigma))$, so that $\tilde{f}(x) = f'(x) - \tilde{z}$. But this then implies $\tilde{z} = f'(x) - s$, which contradicts that $\tilde{z}\in \real^{n+1}\setminus F(\Sigma\times S)$. Thus $\tilde{f}(\Sigma)\cap S = \emptyset$, completing the proof.
\end{proof}

Recall that a path connected topological space $X$ is \emph{$k$-connected} if the homotopy groups $\pi_l(X)$ are trivial for all $1\leq l\leq k$.

\begin{lemma}\label{lemma a2}
Let $k\leq n$ be positive integers, $U\subset\real^{n+1}$ a path connected open set, and $S\subset U$ a closed set with $\haus^{n-k}(S)=0$.
Suppose $U$ is $k$-connected. Then $U\setminus S$ is $k$-connected.
\end{lemma}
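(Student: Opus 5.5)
We derive this from Lemma \ref{lemma a1}, applied twice: once to show that $U\setminus S$ is path connected, and once to show that each homotopy group $\pi_l(U\setminus S)$ with $1\leq l\leq k$ is trivial.

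\emph{Path connectedness.} Take $x,y\in U\setminus S$. Since $U$ is path connected, there is a path $\gamma:[0,1]\to U$ from $x$ to $y$. Apply Lemma \ref{lemma a1} with $\Sigma=[0,1]$, so $l=0$. The lemma is stated for $l\geq 1$, but its proof works verbatim for $l=0$, because it only needs $\haus^{n-k+1}(F(\Sigma\times S))=0$. Choose $\varepsilon<d(\gamma([0,1]),\real^{n+1}\setminus U)$; this is positive since $\gamma([0,1])$ is compact and $U$ is open. The resulting $\tilde\gamma$ has the same endpoints, avoids $S$, and stays inside $U$. One point needs care: $S$ is closed in $U$ but possibly not in $\real^{n+1}$. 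To handle this, replace $S$ by $S'=S\cap \overline{\eball_{\varepsilon}(\gamma([0,1]))}$. This set is compact, hence closed in $\real^{n+1}$, because $\overline{\eball_\varepsilon(\gamma([0,1]))}\subset U$ for $\varepsilon$ small. Any path within $\varepsilon$ of $\gamma$ that misses $S'$ also misses $S$.

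\emph{Vanishing of $\pi_l$.} Fix $1\leq l\leq k$ and a map $f:\S^l\to U\setminus S$. Because $U$ is $k$-connected, $f$ extends to $F:\overline{\eball^{l+1}}\to U$. Now apply Lemma \ref{lemma a1} with $\Sigma=\overline{\eball^{l+1}}$, a compact $C^1$ manifold with nonempty boundary $\S^l$. We have $F(\partial\Sigma)=f(\S^l)$, which is disjoint from $S$. Take $\varepsilon$ less than the distance from $F(\Sigma)$ to $\real^{n+1}\setminus U$, and replace $S$ by the compact truncation $S'$ as above. The lemma's hypothesis $\haus^{n-k}(S)=0$ is satisfied, and $l\leq k$ as required. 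It yields $\tilde F$ with $\tilde F|_{\S^l}=f$, $\tilde F(\Sigma)\subset U$ and $\tilde F(\Sigma)\cap S=\emptyset$. So $f$ is null-homotopic in $U\setminus S$. Since $U\setminus S$ is path connected, basepoints play no role, and $\pi_l(U\setminus S)=0$ for all $1\leq l\leq k$.

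The main obstacle is bookkeeping rather than a genuine difficulty. One must ensure that the perturbed map stays inside $U$, which the choice of $\varepsilon$ handles. One must ensure that the closedness hypothesis of Lemma \ref{lemma a1} holds in $\real^{n+1}$, which the compact truncation of $S$ handles. And one must treat the $l=0$ (path) case, which either follows from the same proof or by noting that a connected open set minus a set of $\haus^{n-1}$-measure zero stays connected.
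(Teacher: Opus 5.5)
Your proposal is correct and follows the same route as the paper: extend $f$ over $\overline{\eball^{l+1}}$ using the $k$-connectedness of $U$, then apply Lemma \ref{lemma a1} to push the extension off $S$ while keeping the boundary fixed. Your extra bookkeeping fills in points the paper's proof leaves implicit. These are the path-connectedness of $U\setminus S$ (the $l=0$ case), the choice $\varepsilon<d(F(\Sigma),\mathbb{R}^{n+1}\setminus U)$ so that the perturbed map stays in $U$, and the compact truncation of $S$ so that Lemma \ref{lemma a1} applies when $S$ is only relatively closed in $U$.
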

\begin{proof}
Fix $1\leq l\leq k$ and let $f:\S^l\rightarrow U\setminus S$ be a continuous map.
Since $\pi_l(U)$ is trivial, $f$ is null-homotopic in $U$, i.e.~there exists a continuous extension $f':\overline{\eball}^{l+1}\rightarrow U$ of $f$.
It follows from Lemma \ref{lemma a1} that $f'$ can be approximated by a ($C^1$) map $\tilde{f}:\overline{\eball}^{l+1}\rightarrow U\setminus S$ with $\tilde{f}\equiv f'\equiv f$ on $\partial B^{l+1}$.
In particular, $f$ is null-homotopic in $U\setminus S$.
\end{proof}

\begin{remark}
Under the same hypotheses and with some technical modifications of the above argument, one can further show that for a triple $(U,A,B)$ with $B\subset A\subset U$ and $S\subset B$ the relative homotopy groups $\pi_l(U,A,B)$ and $\pi_l(U\setminus S,A\setminus S, B\setminus S)$ are in fact isomorphic.
\end{remark}

\begin{lemma}\label{lemma a3}
Let $n$, $l$ and $k$ be positive integers with $l\leq k\leq n$. Let $(N,h)$ be a $(n+1)$-dimensional smooth Riemannian manifold and $S\subset N$ a closed set with $\haus^{n-k}(S)=0$. Suppose $\Sigma$ is a compact $(l+1)$-dimensional $C^1$ manifold with non-empty boundary $\partial\Sigma$ and $f:\Sigma\rightarrow N$ a continuous map with $f(\partial\Sigma)\cap S=\emptyset$.
Then for all $\varepsilon>0$, there exists a continous map $\tilde{f}:\Sigma\rightarrow N$ such that
\begin{enumerate}
    \item [\textnormal{(1)}] $\tilde{f}(\partial\Sigma)=f(\partial\Sigma)$;
    \item [\textnormal{(2)}] $\sup_{x\in\Sigma}d^N(f(x),\tilde{f}(x))<\varepsilon$; 
    \item [\textnormal{(3)}] $\tilde{f}(\Sigma)\cap S=\emptyset$.
\end{enumerate}
\end{lemma}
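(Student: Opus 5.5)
The idea is to reduce Lemma \ref{lemma a3} to its Euclidean version, Lemma \ref{lemma a1}, by working locally in charts and modifying $f$ one small piece at a time. Two facts make this possible. First, $S$ is closed and $\Sigma$ is compact, so the bad set $f^{-1}(S)$ is a compact subset of $\Sigma\setminus\partial\Sigma$. Second, in a coordinate chart the condition $\haus^{n-k}(S)=0$ still holds, because charts are locally bi-Lipschitz.

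\textbf{Setup.} I would first cover $f(\Sigma)$ by finitely many geodesic balls $\nball_{\rho}(p_j)$, $j=1,\dots,m$, with $\rho$ much smaller than the injectivity radius along the compact set $f(\Sigma)$. On the slightly larger balls $\nball_{4\rho}(p_j)$, the maps $\exp_{p_j}^{-1}$ are bi-Lipschitz charts onto Euclidean balls. Next I would triangulate $\Sigma$ finely enough that each closed $(l+1)$-simplex (or each small closed star) is mapped by $f$ into some $\nball_{\rho}(p_j)$.

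\textbf{Induction over pieces.} I would then modify $f$ piece by piece. To avoid dealing with simplices of intermediate dimension, the cleanest route is to pick finitely many compact $(l+1)$-dimensional $C^1$ submanifolds-with-boundary $D_1,\dots,D_m\subset\Sigma$ whose interiors cover $f^{-1}(S)$, with $f(D_i)$ inside a single chart ball. I process them in order. At step $i$ the current map $f_{i-1}$ misses $S$ on a neighbourhood of $f^{-1}(S)\cap(D_1\cup\dots\cup D_{i-1})$. I choose a domain $D_i'\subset D_i$ that is disjoint from this already-good region and contains what remains of $f_{i-1}^{-1}(S)\cap D_i$. Note that $f_{i-1}(\partial D_i')$ misses $S$. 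In the chart, Lemma \ref{lemma a1} applies to $\exp^{-1}\circ f_{i-1}|_{D_i'}$ with $\Sigma$ replaced by $D_i'$ and $S$ replaced by $\exp^{-1}(S\cap\nball_{4\rho})$. This gives a modification that fixes $\partial D_i'$ and is $\varepsilon_i$-close to the original, so it glues continuously with $f_{i-1}$ outside $D_i'$.

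\textbf{Choice of tolerances.} I would pick $\varepsilon_i$ small enough to guarantee three things:
\begin{enumerate}
\item[(i)] the image stays inside the chart;
\item[(ii)] the already-good region stays at positive distance from $S$ (closedness of $S$ plus compactness give a positive distance $\eta_i$, and I take $\varepsilon_i<\eta_i$);
\item[(iii)] $\sum_i\varepsilon_i<\varepsilon$, up to the Lipschitz constant of the charts.
\end{enumerate}
Since $\partial\Sigma$ lies outside every $D_i'$, the boundary values never change.

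\textbf{Main obstacle.} The hard part is the bookkeeping that keeps earlier fixes from being undone. Here that amounts to choosing the $D_i'$, which avoid previously repaired regions while still covering the remaining bad set. The choice is possible because each repaired set is compact, so it has a positive-distance margin from $S$; I shrink and enlarge the domains within the margins given by (ii). If this ordering argument becomes awkward, I would fall back on a different route: embed $N$ isometrically in $\real^L$ via Nash and use a tubular-neighbourhood retraction. That route, however, needs a version of Lemma \ref{lemma a1} in higher codimension and a transversality argument for the retraction, which is why I prefer the chart-by-chart induction.
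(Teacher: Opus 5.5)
Your overall plan (work in charts, apply Lemma \ref{lemma a1} locally, then globalize) is what the paper has in mind; its proof is only ``Lemma \ref{lemma a1} plus a globalization argument.'' However, the globalization you set up fails at the sentence ``Note that $f_{i-1}(\partial D_i')$ misses $S$,'' because nothing guarantees it. Since $f$ is merely continuous, $f^{-1}(S)$ can be any compact subset of $\Sigma\setminus\partial\Sigma$. In particular it can be a continuum whose image lies in no single chart ball.

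For instance, take $n=3$, $k=l=1$, $N$ a flat $4$-torus, and $S$ a non-contractible closed geodesic. Then $\haus^2(S)=0$, and $S$ lies in no geodesic ball below the injectivity radius. Let $f$ map a segment $J$ in the interior of the disc $\Sigma$ onto $S$, and consider the first piece $D_i'$ that meets $J$. Earlier pieces do not meet $J$, so $f_{i-1}=f$ on $J$. Also $J\not\subset D_i'$, because $f_{i-1}(D_i')$ lies in one chart ball while $f(J)=S$ does not. Since $J$ is connected, $J\cap\partial D_i'\neq\emptyset$, i.e.\ $f_{i-1}(\partial D_i')\cap S\neq\emptyset$, so Lemma \ref{lemma a1} cannot be applied on $D_i'$.

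In fact your two requirements on $D_i'$ contradict each other whenever the bad set touches $\partial D_i$. Such a point cannot be interior to $D_i'\subset D_i$, so it is either left out of $D_i'$ or lies on $\partial D_i'$. Your margin argument controls only the interface with \emph{already repaired} regions. The obstruction sits at the interface with the part of the bad set not yet repaired.

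What is missing is a local step that does not require the boundary of the piece to avoid $S$ in advance. Any of the following supplies it.
(a) The route you set aside: repair skeleton by skeleton in a fine triangulation with $\partial\Sigma$ a subcomplex. Each $(j+1)$-simplex then has its boundary in the already repaired $j$-skeleton, so the argument of Lemma \ref{lemma a1} applies on it and the repaired simplices glue. Extend the small displacement over higher simplices inside charts; any new bad points there are removed at the next stage.
(b) After a relative Whitney approximation making $f$ of class $C^1$ away from $\partial\Sigma$, use the cut-off translation from the proof of Lemma \ref{lemma a1} chart by chart. In coordinates set $f_i=f_{i-1}-\lambda_i z_i$, with $z_i$ small and generic and $\lambda_i\equiv 1$ on compact sets $K_i$. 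The $K_i$ should cover all of $\Sigma$ outside a collar of $\partial\Sigma$ on which $f$ stays a definite distance from $S$. Your tolerance condition (ii) is then exactly what protects earlier repairs.
(c) Your fallback needs nothing new. For any smooth embedding $N\subset\real^L$ with tubular retraction $\pi$, near $f(\Sigma)$ the set $\pi^{-1}(S)$ is locally the diffeomorphic image of $S\times\eball^{L-n-1}$, hence $\haus^{L-1-k}$-null. So Lemma \ref{lemma a1} applies as stated in $\real^L$, to $\pi^{-1}(S)$ intersected with a compact neighbourhood of $f(\Sigma)$, and $\pi\circ\tilde f$ is the required map. No transversality argument is involved.
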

\begin{proof}
This follows from Lemma \ref{lemma a1} and a globalization argument, cf.~\cite{hirsch1994}*{\textsection 2.2, Theorem 2.2}.
\end{proof}

We can now prove analogously to Lemma \ref{lemma a2}:

\begin{corollary}\label{k-connected}
Let $k\leq n$ be positive integers. Suppose $N$ be an $(n+1)$-dimensional smooth manifold, $U\subset N$ a path connected open subset, and $S\subset U$ a closed set with $\haus^{n-k}(S)=0$. Then if $U$ is $k$-connected, then $U\setminus S$ is $k$-connected.
\end{corollary}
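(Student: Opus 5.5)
The plan is to argue exactly as in the proof of Lemma \ref{lemma a2}, with Lemma \ref{lemma a3} taking the place of Lemma \ref{lemma a1}.

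\emph{Path connectedness.} We first check that $U\setminus S$ is path connected; Lemma \ref{lemma a2} takes this for granted, but here it has to be justified. Take $x,y\in U\setminus S$. Since $U$ is path connected, there is a path $\gamma:[0,1]\to U$ joining them. Regard $[0,1]$ as a compact $1$-dimensional manifold with boundary $\{0,1\}$, whose image avoids $S$. Since $\haus^{n-k}(S)=0$ and $n-k\leq n-1$, the dimension count in Lemma \ref{lemma a1} still works in this case: the relevant image has $\haus^{n-k+1}$-measure zero, and $n-k+1\leq n<n+1$. Hence the argument of Lemma \ref{lemma a3}, run with $l=0$, perturbs $\gamma$ relative to its endpoints to a path avoiding $S$. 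We choose $\varepsilon$ smaller than the distance from the compact set $\gamma([0,1])$ to $N\setminus U$, so the perturbed path stays in $U$.

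\emph{Vanishing of $\pi_l$.} Fix $1\leq l\leq k$ and a continuous map $f:\S^l\to U\setminus S$. Because $\pi_l(U)=0$, $f$ extends to a continuous map $f':\overline{\eball}^{l+1}\to U$. The closed ball $\overline{\eball}^{l+1}$ is a compact $(l+1)$-dimensional $C^1$ manifold whose boundary $\S^l$ is non-empty, and $f'(\S^l)=f(\S^l)\subset U\setminus S$.

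\emph{Perturbation and containment.} Apply Lemma \ref{lemma a3} to $f'$. The set $S$ is closed in $U$, not necessarily in $N$, so either apply the lemma with $N$ replaced by the open manifold $U$ (the lemma only needs a smooth manifold in which $S$ is closed), or note that the perturbation is local. As in the first step, take $\varepsilon<d^N(f'(\overline{\eball}^{l+1}),N\setminus U)$; this is positive by compactness. The resulting map $\tilde f$ agrees with $f$ on $\S^l$, has image in $U$ by property (2), and avoids $S$ by property (3). So $\tilde f$ is a null-homotopy of $f$ inside $U\setminus S$, and $\pi_l(U\setminus S)=0$. Since basepoints can be ignored once path connectedness is known, this gives the claim for every $1\leq l\leq k$.

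\emph{Main obstacle.} All of the real content sits in Lemma \ref{lemma a3}, which we are allowed to assume. The only delicate points here are making sure the perturbation stays inside $U$, which the choice of $\varepsilon$ or working in $U$ as the ambient manifold handles, and the path-connectedness step, which the $l=0$ version of the same perturbation argument handles.
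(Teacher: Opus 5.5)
Your proposal is correct and follows the paper's intended route: the paper proves the corollary "analogously to Lemma \ref{lemma a2}," with Lemma \ref{lemma a3} in place of Lemma \ref{lemma a1}. Your extra checks fill in details the paper leaves implicit. These are path connectedness of $U\setminus S$ via the $l=0$ case of the perturbation, and applying Lemma \ref{lemma a3} in the open manifold $U$, since $S$ is only closed in $U$.
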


\section{An elementary covering lemma}\label{app:B}

The following lemma is a result regarding the maximum number of disjoint open balls which may intersect a given open ball in certain Riemannian manifolds. As mentioned at the start of Section \ref{case n+1=8}, this builds on \cite{song.a2023}*{Page 2085, (7)} and a simple version is needed in Section \ref{case n+1=8}. The hypotheses are not optimal and the result might hold more generally.

\begin{lemma}
Let $n\in\integer_{\geq 2}$, $\kappa,V\in (0,\infty)$, and $L\in(0,1)$.
Then, there exists a constant $C = C(n,\kappa,V,L)\in (0,\infty)$ with the following property.

Suppose $(N,h)$ is a complete Riemannian manifold of dimension $n+1$ with $|\textup{Ric}_N|\leq \kappa$, $\inj_N>\kappa^{-1}$ and $\textup{vol}(N)\leq V$. Suppose $f:N\rightarrow\real_{>0}$ is a (positive) Lipschitz function with $\textup{Lip}(f)\leq L$. Then for any $y\in N$, we have the following: if $\{B^N_{f(x_i)}(x_i)\}_{i=1}^l$ are pairwise disjoint balls ($x_i \in N$) with
$$B^N_{f(x_i)}(x_i)\cap B^N_{f(y)}(y) \neq \emptyset \qquad \text{for all $i$,}$$
then $l\leq C$.
\end{lemma}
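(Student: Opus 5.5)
Set $r:=f(y)$. The plan is a standard volume-packing argument, split according to the size of $r$ relative to $\kappa^{-1}$. Two facts drive it. First, by the Lipschitz bound, any ball $B^N_{f(x_i)}(x_i)$ meeting $B^N_r(y)$ satisfies $d(x_i,y)<f(x_i)+r$ and $f(x_i)\le r+Ld(x_i,y)$. Hence
\begin{equation*}
f(x_i)\le r+L(f(x_i)+r),\qquad\text{so}\qquad f(x_i)\le \tfrac{1+L}{1-L}\,r=:\Lambda_L r .
\end{equation*}
Symmetrically, $r\le f(x_i)+L\,d(x_i,y)\le f(x_i)+L(f(x_i)+r)$, which gives $f(x_i)\ge \tfrac{1-L}{1+L}\,r=\Lambda_L^{-1}r$. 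Consequently all the balls $B^N_{f(x_i)}(x_i)$ have radii comparable to $r$, and they all lie in $B^N_{(1+2\Lambda_L)r}(y)$. Note that $L<1$ is used essentially here.

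Second, I would use volume comparison. Since $|\textup{Ric}|\le\kappa$, Bishop--Gromov gives an upper bound $\textup{vol}(B^N_s(x))\le v_{-\kappa}(s)$, where $v_{-\kappa}$ denotes the model-space volume. Since $\inj_N>\kappa^{-1}$, Croke's inequality gives a lower bound $\textup{vol}(B^N_s(x))\ge c(n)s^{n+1}$ for $s\le \kappa^{-1}/2$. With these in hand I split into two cases.

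\textbf{Case} $\Lambda_L r\le \kappa^{-1}/2$. All the disjoint balls have radius at least $\Lambda_L^{-1}r$ (and at most $\Lambda_L r\le\kappa^{-1}/2$, so Croke applies). Disjointness and containment in $B^N_{(1+2\Lambda_L)r}(y)$ then give
\begin{equation*}
l\,c(n)(\Lambda_L^{-1}r)^{n+1}\le \textup{vol}\big(B^N_{(1+2\Lambda_L)r}(y)\big)\le v_{-\kappa}\big((1+2\Lambda_L)r\big).
\end{equation*}
Since $r\le \kappa^{-1}$, we have $v_{-\kappa}(s)\le C(n)s^{n+1}$ for $s\le C\kappa^{-1}$. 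Therefore $l\le C(n,L)$.

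\textbf{Case} $\Lambda_L r>\kappa^{-1}/2$. Every radius is at least $\Lambda_L^{-1}r\ge (2\Lambda_L^2\kappa)^{-1}$. Each ball therefore contains a ball of radius $\min\{(2\Lambda_L^2\kappa)^{-1},\kappa^{-1}/2\}$, whose volume is at least $c(n,\kappa,L)>0$ by Croke. Disjointness together with $\textup{vol}(N)\le V$ then yields $l\le V/c(n,\kappa,L)$.

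The main point to check is the applicability of the volume lower bound: Croke's estimate requires only the injectivity radius bound, whereas the upper bound needs only the Ricci lower bound. I would cite these comparison results rather than reprove them. One should also note that the Lipschitz estimate is applied with the distance function, which is legitimate on a complete manifold.
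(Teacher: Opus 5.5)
Your proof is correct and is essentially the paper's argument: the two-sided Lipschitz comparison $\frac{1-L}{1+L}f(y)\le f(x_i)\le\frac{1+L}{1-L}f(y)$, containment of all the balls in $B^N_{(1+2\frac{1+L}{1-L})f(y)}(y)$, disjoint volume packing, and a split into small and large $f(y)$ with $\textup{vol}(N)\le V$ used in the large case. The only difference is that you take the volume bounds from Croke's inequality and Bishop's comparison rather than from the Taylor expansion of geodesic balls and Carron's estimate; this is arguably cleaner, since Croke needs only the injectivity radius, whereas the Taylor remainder is not uniformly controlled by $|\textup{Ric}|\le\kappa$ alone. One harmless slip: $\kappa$ bounds a curvature, so the bound $v_{-\kappa}(s)\le Cs^{n+1}$ for $s$ comparable to $\kappa^{-1}$ has $C$ depending on $\kappa$, and Case 1 therefore gives $l\le C(n,\kappa,L)$ rather than $C(n,L)$.
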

\begin{proof}
Let $l>0$ and $\{\nball_{f(x_i)}(x_i)\}_{i=1}^l$ be a collection of $l$ disjoint balls all of which intersect $\nball_{f(y)}(y)$ non-trivially.
It follows that for all $i=1,\dotsc,l$, $d^N(x_i,y)\leq f(x_i)+f(y)$, and hence from $f$ being Lipschitz we get
\begin{equation}\label{E:f-bounds}
\frac{1-L}{1+L}f(y)\leq f(x_i)\leq \frac{1+L}{1-L}f(y),
\end{equation}
and hence as $B^N_{f(x_i)}(x_i)\subset B^N_{f(x_i)+d^N(x_i,y)}(y)$ for all $i$, we get $\cup_{i=1}^l\nball_{f(x_i)}(x_i)\subset\nball_{\left(1+2\frac{1+L}{1-L}\right)f(y)}(y)$. Therefore, as the $B^N_{f(x_i)}(x_i)$ are disjoint:
\begin{equation*}
\sum_{i=1}^l\textup{vol}\left(\nball_{f(x_i)}(x_i)\right)\leq\textup{vol}\left(\nball_{\left(1+2\frac{1+L}{1-L}\right)f(y)}(y)\right).
\end{equation*}
It follows from the Taylor expansion of the volume of geodesic balls  combined with \eqref{E:f-bounds} that there exists a constant $c_1 = c_1(n)\in (0,1)$ such that if $f(y)\leq\kappa^{-1}$ satisfies $c_1\left(\frac{1-L}{1+L}f(y)\right)^2\kappa\leq 1$, then
$$\textup{vol}(\nball_{f(x_i)}(x_i))\geq \textup{vol}\left(\nball_{\frac{1-L}{1+L}f(y)}(x_i)\right)\geq \frac{1}{2}\cdot\omega_{n+1}\left(\frac{1-L}{1+L}f(y)\right)^{n+1}$$
for each $i$, where $\omega_n$ is the volume of the unit ball in Euclidean space of dimension $n+1$.
On the other hand, it follows from \cite{carron20}*{Theorem 2.1} that there exists $\bar{R} = \bar{R}(n,\kappa,V)>0$ and $c_2 = c_2(n)\in(0,\infty)$ such that if $r\leq\bar{R}$ then $\textup{vol}(\nball_r(z))\leq c_2 r^{n+1}$ for any $z\in N$.

Now, if $f(y)\leq c_3$, where $c_3:=\min\left\{\kappa^{-1},\frac{1+L}{1-L}(\kappa c_1)^{-1/2},\left(1+2\frac{1+L}{1-L}\right)^{-1}\bar{R}\right\}$, then combining the above we get
\begin{equation*}
l\cdot \frac{1}{2}\omega_{n+1}\left(\frac{1-L}{1+L}\right)^{n+1} f(y)^{n+1}\leq \left(1+2\frac{1+L}{1-L}\right)^{n+1}c_2\cdot f(y)^{n+1}
\end{equation*}
and so the proof is complete after rearranging. If instead $f(y)>c_3$, then:
\begin{equation*}
    \textup{vol}\left(B^N_{\frac{1-L}{1+L}f(y)}(x_i)\right) \geq \textup{vol}\left(B^N_{\frac{1-L}{1+L}c_3}(x_i)\right) \geq \frac{1}{2}\cdot \omega_{n+1}\left(\frac{1-L}{1+L}c_3\right)^{n+1}
\end{equation*}
where in last inequality follows by the same Taylor expansion of the volume of geodesic balls as above (as $c_3$ is sufficiently small). Hence
\begin{align*}
l \cdot \frac{1}{2}\omega_{n+1}\left(\frac{1-L}{1+L}\right)^{n+1}c_3^{n+1} & \leq \sum_{i=1}^l\textup{vol}\left(\nball_{\frac{1-L}{1+L}f(y)}(x_i)\right)\\
&\leq \sum_{i=1}^l\textup{vol}(\nball_{f(x_i)}(x_i))\leq\textup{vol}\left(\nball_{\left(1+2\frac{1+L}{1-L}\right)f(y)}(y)\right)\leq V,
\end{align*}
which concludes the proof in this case as well.
\end{proof}

\bibliographystyle{alpha}
\bibliography{bibliography}
\end{document}